\newcommand{\defnword}[1]{\textbf{#1}}
\newcommand{\comment}[1]{}
\newcommand{\on}[1]{\operatorname{#1}}
\numberwithin{equation}{subsection}
\newtheorem{introthm}{Theorem}
\newtheorem{proposition}[subsection]{Proposition}
\newtheorem{theorem}[subsection]{Theorem}
\newtheorem*{thm*}{Theorem}
\newtheorem{lemma}[subsection]{Lemma}
\newtheorem*{lem*}{Lemma}
\theoremstyle{definition}
\newtheorem{defn}[subsection]{Definition}
\theoremstyle{remark}
\newtheorem{remark}[subsection]{Remark}
\newtheorem*{assump*}{Assumption}
\newcommand{\R}{\mathbb{R}}
\newcommand{\Z}{\mathbb{Z}}
\newcommand{\C}{\mathbb{C}}
\newcommand{\A}{\mathbb{A}}
\newcommand{\F}{\mathbb{F}}
\newcommand{\Q}{\mathbb{Q}}
\newcommand{\Lie}{\on{Lie}}
\newcommand{\coker}{\on{coker}}
\newcommand{\Rg}{\mathscr{O}}
\newcommand{\Reg}[1]{\Rg_{#1}}
\newcommand{\co}{\mathcal{O}}
\newcommand{\Hom}{\on{Hom}}
\newcommand{\End}{\on{End}}
\newcommand{\pow}[1]{[\vert #1|]}
\newcommand{\Gal}{\on{Gal}}
\newcommand{\Gm}{\mathbb{G}_m}
\newcommand{\Spec}{\on{Spec}}
\newcommand{\alg}{\mathrm{alg}}
\newcommand{\cris}{\on{cris}}
\newcommand{\dR}{\on{dR}}
\newcommand{\et}{\text{\'et}}
\newcommand{\Fil}{\on{Fil}}
\newcommand{\Sh}{\on{Sh}}
\newcommand{\GSp}{\on{GSp}}
\newcommand{\GL}{\on{GL}}
\newcommand{\SO}{\on{SO}}
\newcommand{\GSpin}{\on{GSpin}}
\newcommand{\op}{\on{op}}
\begin{document}
\title[$2$-adic integral canonical models]{$2$-adic integral canonical models and the Tate conjecture in characteristic $2$}
\author{Wansu Kim and Keerthi Madapusi Pera}

\address{Department of Mathematics, University of Chicago, 5734 S University Ave, Chicago, IL, USA}
\email{keerthi@math.uchicago.edu}

\address{Department of Mathematics, King's College London, Strand, London, WC2R 2LS, UK}
\email{wansu.kim@kcl.ac.uk}

\thanks{K.~Madapusi Pera is supported by NSF grant DMS-1502142.}

\begin{abstract}
We use E. Lau's classification of $2$-divisible groups using Dieudonn\'e displays to construct integral canonical models for Shimura varieties of abelian type at $2$-adic places where the level is hyperspecial. We apply this to prove the Tate conjecture for K3 surfaces in characteristic $2$.
\end{abstract}

\maketitle

\section*{Introduction}

In this note, we complete the construction of integral canonical models from~\cite{kisin:abelian}at places of hyperspecial level, so that it also works at $2$-adic places, without any additional restrictions. Therefore, we obtain the following theorem.

\begin{introthm}
\label{thm:main}
Let $\Sh_K(G,X)$ be a Shimura variety of abelian type associated with a Shimura datum $(G,X)$ and a neat level $K\subset G(\A_f)$, defined over the reflex field $E = E(G,X)$. Suppose that for a prime $p$ the $p$-primary part $K_p$ is hyperspecial. Then $\Sh_K(G,X)$ admits an integral canonical model over $\Reg{E,(v)}$ for any place $v\vert p$ of $E$.
\end{introthm}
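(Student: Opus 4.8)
The plan is to follow the strategy of~\cite{kisin:abelian}, the only genuinely new ingredient being the replacement, in the local analysis at $p=2$, of Breuil--Kisin modules (which are well behaved only for $p>2$) by E.~Lau's classification of $2$-divisible groups via Dieudonn\'e displays.

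\emph{Reduction to the Hodge type case.} First I would treat the case where $(G,X)$ is of Hodge type. Since $K_p$ is hyperspecial one may choose a symplectic embedding $(G,X)\hookrightarrow(\GSp(V),S^\pm)$ for which $K_p$ is the stabiliser of a self-dual $\Z_{(p)}$-lattice $V_{\Z_{(p)}}\subset V$; then $G_{\Z_{(p)}}=\overline{G}\subset\GL(V_{\Z_{(p)}})$ is a reductive model of $G$, and, following Deligne, it is the pointwise stabiliser in $\GL(V_{\Z_{(p)}})$ of a finite family of tensors $s_\alpha\in V_{\Z_{(p)}}^{\otimes}$. Spreading out the level and passing to the Siegel integral model $\Ss_{K'}(\GSp(V),S^\pm)$ over $\Z_{(p)}$ (a moduli space of polarised abelian schemes), I would define the candidate model $\Ss_K(G,X)$ over $\Reg{E,(v)}$ to be the normalisation of the Zariski closure of $\Sh_K(G,X)$ in $\Ss_{K'}(\GSp(V),S^\pm)\otimes\Reg{E,(v)}$. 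By restriction it carries a polarised abelian scheme $A$, whose $p$-divisible group $\mathcal{G}=A[p^\infty]$ comes equipped with the crystalline avatars $s_{\alpha,\mathrm{cris}}$ of the $s_\alpha$ via $p$-adic comparison.

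\emph{Smoothness.} The crux is to prove that $\Ss_K(G,X)$ is smooth over $\Reg{E,(v)}$, equivalently that each complete local ring $\widehat{\Rg}_{\Ss_K,x}$ at a closed point $x$ of the special fibre (residue field $k$, perfect of characteristic $p$) is formally smooth of dimension $\dim_\C X$. I would identify this ring with a deformation ring: deformations of $\mathcal{G}_x$ over Artinian $W(k)$-algebras that respect the tensors $s_\alpha$, i.e.\ for which the $s_{\alpha}$ stay crystalline (Frobenius-fixed, filtration-compatible). For $p=2$ I would invoke Lau's equivalence between $2$-divisible groups over such bases and Dieudonn\'e displays to transport the $s_\alpha$ to tensors on the associated Dieudonn\'e module, build a versal deformation with $G$-structure by lifting the Hodge filtration to a filtration induced by $\overline{G}$, and check directly that the resulting ring is a power series ring over $W(k)$ of the expected dimension (the dimension of the relevant partial flag variety of $G$, namely $\dim_\C X$). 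A map from this deformation ring to $\widehat{\Rg}_{\Ss_K,x}$ exists because $\Ss_K(G,X)$ carries the pair $(A,(s_\alpha))$; one shows it is an isomorphism by checking it is surjective into the normal, equidimensional target and comparing dimensions, exactly as in~\cite{kisin:abelian}.

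\emph{Extension property and passage to abelian type.} Once smoothness is known, the Néron-type extension property is inherited from the Siegel case: given a healthy regular $\Reg{E,(v)}$-scheme $T$ with a map $T_E\to\Sh_K(G,X)$, the composite with the symplectic embedding extends to the Siegel model by the Néron property of abelian schemes over healthy regular bases (Faltings, Vasiu) together with purity for the level structure, and since $\Ss_K(G,X)$ is the normalisation of a closed subscheme and $T$ is normal the extension factors through $\Ss_K(G,X)$; uniqueness then also shows $\Ss_K(G,X)$ is independent of the auxiliary choices. Finally, for a general Shimura datum of abelian type I would follow Deligne's and Kisin's recipe: fix a connected component, use the central isogeny relating $(G,X)$ to a Hodge-type datum and the action of $G^{\mathrm{ad}}(\Q)^+$, and realise the abelian-type integral model as the quotient of a Hodge-type one by a finite group acting freely; smoothness and the extension property survive this construction. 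The main obstacle is the local computation at $p=2$: arranging that Lau's Dieudonn\'e displays interact cleanly with the crystalline tensors $s_\alpha$ (that these are horizontal and lift the \'etale tensors under the relevant comparisons when $p=2$), and that the associated $G$-deformation ring is formally smooth of the correct dimension and coincides with $\widehat{\Rg}_{\Ss_K,x}$; everything downstream is formally identical to the theory for $p>2$.
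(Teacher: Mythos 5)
Your global skeleton (Hodge-type case via normalization in a Siegel integral model, smoothness via a deformation-theoretic local computation, the extension property from the N\'eron property of abelian schemes, and the passage to abelian type by Kisin's quotient construction) is the same as the paper's, and you correctly locate the only genuinely new issue at the local computation when $p=2$. The gap is that at precisely that point you defer to doing things ``exactly as in~\cite{kisin:abelian}'', and that is the step which fails for $p=2$: Kisin's proof of (1.5.8) of \emph{loc.~cit.} --- the statement that there is a formally smooth quotient $R_G$ of $R^{\mathrm{univ}}$ whose $\co_L$-valued points are exactly the $G$-adapted lifts --- rests on Grothendieck--Messing/Zink-style crystalline deformation theory, which requires $p>2$ or connectedness of the $p$-divisible group. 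Moreover, your proposed fix, namely to ``transport the $s_\alpha$ to tensors on the Dieudonn\'e display'' and consider deformations over arbitrary artinian $W(k)$-algebras respecting the tensors, is not a well-posed deformation problem as stated: the crystalline tensors are a priori controlled only over $\co_L$ (and, after the fact, over $\co_L/\pi_L^{a+1}$), and neither Kisin nor this paper gives $R_G$ a moduli interpretation over general artinian bases; $G$-adaptedness is only ever defined for lifts over $\co_L$ and over $\co_L/\pi_L^{a+1}$. So simply invoking Lau's equivalence does not by itself yield the characterization of $R_G$ that the rest of your argument (the identification with $\widehat{\Rg}_{\Ss_K,x}$ by density of such points and a dimension count) needs.

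What the paper does to close exactly this gap is Proposition~\ref{prop:adapted}, whose proof is new and is not a transport of Kisin's argument: (i) Lau's theory~\cite{lau:displays} is used in the form of Theorem~\ref{thm:kisin_lau_p_divisible}, classifying $p$-divisible groups over the artinian quotients $\co_L/\pi_L^{a+1}$ by Breuil windows of level $a$, compatibly with the Breuil--Kisin functor over $\co_L$; (ii) Lemma~\ref{lem:point count} then \emph{counts}: by rigidifying a $G$-adapted lift as a window $\bigl(\mathfrak{S}_{k',a}\otimes_W M_0,\,\mathcal{E}_L(u)\mathfrak{S}_{k',a}\otimes_W M_0'\oplus\mathfrak{S}_{k',a}\otimes_W\Fil^1M_0,\,\tilde\varphi_a\bigr)$ and running a torsor argument under $G(\mathfrak{S}_{k',a})$, one shows there are at most $q^{ad}$ $G$-adapted points $R^{\mathrm{univ}}\to\co_L/\pi_L^{a+1}$; (iii) Faltings' explicit construction (Proposition~\ref{prop:dieudonne faltings construction}) produces a formally smooth quotient $R_G$ of dimension $d$ all of whose $\co_L$-points are $G$-adapted (Proposition~\ref{prop:RG adapted}, which itself needs Kisin's Lemma 1.1.17 and the variant of the Key Lemma in Remark~\ref{rem:key lemma other way}); and (iv) Lemma~\ref{lem:one implication} compares the cardinality $q^{ad}$ of points of $R_G$ over $\co_L/\pi_L^{a+1}$ with the bound from (ii) to conclude that \emph{every} $G$-adapted $\co_L$-point factors through $R_G$. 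This counting mechanism, replacing crystalline deformation theory, is the essential new content of the $2$-adic case, and it is the piece missing from your proposal; once it is in place, Proposition~\ref{prop:local model} and the abelian-type reduction proceed as you describe.
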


The non-trivial input is Lau's classification of $p$-divisible groups over a very large class of $2$-adic rings in terms of Dieudonn\'e displays~\cite{lau:displays}, and its compatibility with a $p$-adic Hodge theoretic construction of Kisin~\cite{Lau2012-il}.

These results will find use in a joint project of the second author with F. Andreatta, E.-Z. Goren and B. Howard on an averaged version of the Colmez conjecture on heights of abelian varieties with complex multiplication~\cite{aghmp:colmez}. Moreover, they extend Kisin's proof of the Langlands-Rapoport conjecture~\cite{kisin:abelian} for Shimura varieties of abelian type, so that it works even at the $2$-adic places. 

As a more immediate application, we prove:
\begin{introthm}
\label{thm:k3}
The Tate conjecture holds for K3 surfaces over finitely generated fields.
\end{introthm}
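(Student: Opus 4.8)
The plan is to reduce Theorem~\ref{thm:k3} to the case of K3 surfaces over finite fields of characteristic $2$ and then to run the Kuga--Satake argument exactly as in odd characteristic, the one genuinely new ingredient being that Theorem~\ref{thm:main} now supplies the integral canonical models at the prime $2$ that were previously unavailable. First I would dispose of the characteristics already treated: for finitely generated fields of characteristic $0$, or of characteristic $p\geq 3$, the Tate conjecture for K3 surfaces is known, via the Kuga--Satake method and Faltings' isogeny theorem together with (in positive characteristic) the work of Nygaard--Ogus, Maulik, Charles and the second author. So it remains to handle a K3 surface $X$ over a finitely generated field $k$ with $\on{char} k = 2$, and by a standard spreading-out and specialization argument --- spreading $X$ out to a smooth projective family over a smooth finite-type $\F_2$-scheme, and using that the geometric Picard rank is attained on a dense set of closed points together with the semisimplicity of Frobenius on $H^2$ --- this reduces to the case $k = \F_q$ a finite field of characteristic $2$; after a finite base change and a twist one may assume $X$ carries a primitive polarization of degree $2d$.

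Next I would set up the Kuga--Satake period morphism. Let $L$ be the even lattice of signature $(19,2)$ attached to the primitive middle cohomology of a $2d$-polarized K3 surface, put $G = \GSpin(L)$, and let $(G, X_G)$ be the associated Shimura datum; it is of Hodge type, hence of abelian type, so for the hyperspecial level $K_2 = \GSpin(L)(\Z_2)$ at $2$ and suitable auxiliary level $K^2$, Theorem~\ref{thm:main} provides a smooth integral canonical model $\Ss_{K}$ of $\Sh_{K}(G, X_G)$ over $\Z_{(2)}$. On the moduli side, the stack $M_{2d,K}$ of $2d$-polarized K3 surfaces with the corresponding level structure is a smooth Deligne--Mumford stack, and the Kuga--Satake construction over $\C$ together with the extension property of $\Ss_{K}$ produces a period morphism $M_{2d,K}\to\Ss_{K}$ of $\Z_{(2)}$-stacks. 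Pulling back the universal abelian scheme yields a Kuga--Satake abelian scheme $\mathcal{A}\to M_{2d,K}$ with its $\on{CSpin}$-structure; specializing at the point corresponding to $X$ gives (after a finite extension of $\F_q$) an abelian variety $A$ together with a $\Gal(\overline{\F_q}/\F_q)$-equivariant isometric embedding of $P^2_{\et}(X_{\overline{\F_q}}, \Q_\ell)(1)$ into the special orthogonal summand of $\End\bigl(H^1_{\et}(A_{\overline{\F_q}}, \Q_\ell)\bigr)$, compatibly under the crystalline comparison with the analogous embedding of $F$-isocrystals.

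It is precisely this crystalline compatibility at $p = 2$ that rests on Lau's classification of $2$-divisible groups by Dieudonn\'e displays and its agreement with Kisin's $p$-adic Hodge theoretic construction --- i.e.\ on the input behind Theorem~\ref{thm:main} --- and I expect this step, showing that the integral comparison identifies the crystalline realization of the tautological class on $\Ss_{K}$ with the crystalline primitive cohomology of $X$, to be the main obstacle, since the published arguments invoke $p$ odd at exactly this juncture. Granting it, one finishes as in odd characteristic: Tate classes in $P^2_{\et}(X_{\overline{\F_q}}, \Q_\ell)(1)$ map to Frobenius-invariant endomorphisms of $H^1_{\et}(A_{\overline{\F_q}}, \Q_\ell)$, which by Tate's theorem for abelian varieties over finite fields and Zarhin's semisimplicity come from $\End(A_{\overline{\F_q}})\otimes\Q$; via the theory of special endomorphisms of the Kuga--Satake abelian scheme and the comparison isomorphisms above, these produce classes in $\on{Pic}(X_{\overline{\F_q}})\otimes\Q_\ell$, and a dimension count (semisimplicity of the Galois action on $H^2$, equivalently the Artin--Tate formula) shows they exhaust the Tate classes. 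The leftover issues are bookkeeping at $p = 2$: the behaviour of $M_{2d}$ and of the period morphism at $2$ for the polarization degrees in play, which can be arranged by restricting to degrees prime to $2$ or passing to a suitable cover, and the compatibility of the $\ell$-adic and crystalline cycle class maps --- all routine adaptations of the odd-$p$ arguments once Theorem~\ref{thm:main} is in hand.
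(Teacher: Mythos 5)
Your overall strategy (Kuga--Satake plus integral canonical models at $2$) is the right one, but two essential difficulties specific to $p=2$ are missing, and they are exactly where the paper has to work. First, the hyperspecial input of Theorem~\ref{thm:main} does \emph{not} apply directly to the K3 period domain: the relevant lattice is $L_d=\langle e-df\rangle^\perp\subset N$, of discriminant $2d$, so $L_{d,\Z_{(2)}}$ is never self-dual and $K_2=\GSpin(L_d)(\Z_2)$ (or $\SO(L_d)(\Z_2)$) is never hyperspecial at $2$; your proposed fix of ``restricting to degrees prime to $2$'' is vacuous, since the degree is $2d$. The paper gets around this by choosing an isometric embedding $L_d\hookrightarrow L^\diamond$ with $L^\diamond_{(2)}$ self-dual, applying Theorem~\ref{thm:main} to $\mathcal{S}(L^\diamond)_{(2)}$, and then defining and controlling $\mathcal{S}(L_d)$ via normalization inside the special-cycle stack $\mathcal{Z}(m)_{(2)}$ of special endomorphisms; the \'etaleness of the period map is then proved through the deformation theory of special endomorphisms (Proposition~\ref{prop:def theory special end}) and the extension of the de Rham isometry $\alpha_{\dR}$ over the ordinary locus. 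None of this is ``routine bookkeeping''; without it you have no integral model receiving the period map and no crystalline comparison on it.

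Second, your plan tacitly assumes the period morphism is defined on all of the moduli stack over $\Z_{(2)}$, but the extension argument only produces an \'etale map $\iota^{\mathrm{KS}}:\tilde{\mathsf{M}}^{\mathrm{sm}}_{2d}\to\mathcal{S}(L_d)$ over the \emph{smooth} locus, and in characteristic $2$ there are genuinely non-smooth points: the superspecial K3 surfaces, where $\mathrm{ch}_{\dR}(\xi)\in\Fil^2H^2_{\dR}(X/k)$. A K3 over $\F_q$ with $q$ a power of $2$ may well be superspecial, and for such $X$ your argument produces no Kuga--Satake abelian variety at all. The paper closes this case by a separate geometric argument: superspecial quasi-polarized K3s admit no non-constant families (Ogus), every irreducible component of the supersingular locus of $\mathsf{M}_{2d,\F_2}$ has dimension $9$, so each superspecial point is a specialization of supersingular points lying in $\tilde{\mathsf{M}}^{\mathrm{sm}}_{2d}$, and Artin's constancy of Picard number in supersingular families transfers the Tate conjecture to the superspecial point. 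Finally, a smaller point: the reduction from finitely generated fields to finite fields, and the finite-field argument itself, are carried out in the paper at the level of special endomorphisms on the orthogonal Shimura variety (Theorem~\ref{thm:tate special}, following the earlier characteristic-$p$ argument, with the needed $\ell$-independence now supplied by Kisin), rather than by a naive spreading-out of Picard ranks, which by itself does not control jumping of the Picard number under specialization.
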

Of course, the case when the field has characteristic $\neq 2$, this is already known by the results of the second author in~\cite{mp:tatek3}, and also by earlier work by Maulik~\cite{Maulik2014-kj} and Charles~\cite{Charles2013-bl}. The new ingredient here is the characteristic $2$ case.

A remark on notation: Given a map $f:R\to S$ of commutative rings, and a module $M$ over $R$, we will use the geometric notation $f^*M$ for the change of scalars $S\otimes_{f,R}M$. Given a ring $R$ and an object $\mathcal{M}$ in an $R$-linear rigid tensor category $\mathbf{C}$, we will write $\mathcal{M}^\otimes$ for the ind-object over $\mathbf{C}$ given by the direct sum of the tensor, exterior and symmetric powers of $\mathcal{M}$ and its dual. Given a scheme $X$ over a ring $R$ and a map $R\to R'$ of rings, we will write $X_{R'}$ or $R'\otimes_RX$ for the base change of $X$ over $R'$. Given a finite free $R$-module $M$, we will write $M^\otimes$ for the direct sum of all $R$-modules that can be formed via the operations of taking duals, tensor products, and symmetric and exterior powers of $M$.

For any other possibly unfamiliar notions, we refer the reader to the Appendix of~\cite{kisin:fcrystals} and \S 1.1 of~\cite{kisin:abelian}.

\section{Lau's classification and integral $p$-adic Hodge theory}\label{sec:lau_stuff}

Fix a perfect field $k$ in characteristic $p$. Set $W = W(k)$ and let $K_0 = \mathrm{Frac}(W)$ be its fraction field. Write $\sigma:W\to W$ for the canonical lift of the $p$-power Frobenius automorphism of $k$. Let $K/K_0$ be a finite totally ramified extension. Choose a uniformizer $\pi\in K$, and let $\mathcal{E}(u)\in W[u]$ be the associated Eisenstein polynomial with constant term $\mathcal{E}(0) = p$, so that $e = \deg \mathcal{E}(u)$ is the ramification index of $K$. Fix an algebraic closure $K_0^\alg$ of $K_0$, as well as an embedding $K\subset K_0^\alg$. Let $\Gamma_K = \Gal(K_0^\alg/K)$ be the absolute Galois group of $K$.

\subsection{}\label{subsec:mfS and S}
Let $\mathfrak{S} = W\pow{u}$ be the power series ring in one variable over $W$ and equip it with the Frobenius lift $\varphi:\mathfrak{S}\to \mathfrak{S}$ satisfying $\varphi(u) = u^p$. 

Let $S$ be the $p$-adic completion of the divided power envelope of the surjection $W[u]\to \co_K$ carrying $u$ to $\pi$. Explicitly, $S$ is the $p$-adic completion of the subring
\[
W\biggl[u,\frac{u^{ei}}{i!}:\;i\in\Z_{\geq 1}\biggr]\subset K_0[u].
\]
The natural map $W[u]\to S$ extends to an embedding $\mathfrak{S}\hookrightarrow S$, and the Frobenius lift $\varphi:\mathfrak{S}\to\mathfrak{S}$ extends continuously to an endomorphism $\varphi:S\to S$.

\subsection{}
A \emph{Breuil-Kisin module} over $\co_K$ (with respect to $\varpi$) is a pair $(\mathfrak{M},\varphi_{\mathfrak{M}})$, where $\mathfrak{M}$ is a finite free $\mathfrak{S}$-module and $\varphi_{\mathfrak{M}}:\varphi^*\mathfrak{M}[\mathcal{E}^{-1}]\xrightarrow{\simeq}\mathfrak{M}[\mathcal{E}^{-1}]$ is an isomorphism of $\mathfrak{S}$-modules. Here, $\varphi:\mathfrak{S}\to\mathfrak{S}$ is the Frobenius lift that extends the canonical Frobenius automorphism $\mathrm{Fr}:W\to W$ and satisfies $\varphi(u)=u^p$. 

Usually, the map $\varphi_{\mathfrak{M}}$ will be clear from context and we will denote the Breuil-Kisin module by its underlying $\mathfrak{S}$-module $\mathfrak{M}$.

For any integer $i$, we will write $\mathbf{1}(i)$ for the Breuil-Kisin module whose underlying $\mathfrak{S}$-module is just $\mathfrak{S}$ equipped with $\mathcal{E}(u)^{-i}$-times the canonical identification $\varphi^*\mathfrak{S}[\mathcal{E}(u)^{-1}] = \mathfrak{S}[\mathcal{E}(u)^{-1}]$. When $i=0$, we will write $\mathbf{1}$ instead of $\mathbf{1}(0)$. 

There are natural notions of tensor, exterior and symmetric products on the category of Breuil-Kisin modules. For any Breuil-Kisin module $\mathfrak{M}$, we will set $\mathfrak{M}(i) = \mathfrak{M}\otimes_{\mathfrak{S}}\mathbf{1}(i)$.

The \defnword{dual} $(\mathfrak{M}^\vee,\varphi_{\mathfrak{M}^\vee})$ of a Breuil-Kisin module $\mathfrak{M}$ is the dual $\mathfrak{S}$-module $\mathfrak{M}^\vee$ equipped with the isomorphism
\[
\varphi_{\mathfrak{M}^\vee}= \bigl(\varphi^\vee_{\mathfrak{M}}\bigr)^{-1}:\varphi^*\mathfrak{M}^\vee[\mathcal{E}(u)^{-1}]\xrightarrow{\simeq}\mathfrak{M}^\vee[\mathcal{E}(u)^{-1}],
\]
where $\varphi^\vee_{\mathfrak{M}}$ is the $\mathfrak{S}$-linear dual of $\varphi_{\mathfrak{M}}$.

\subsection{}\label{subsec:kisin_functor}
By~\cite[Theorem (1.2.1)]{kisin:abelian}, there is a (covariant) 
fully faithful tensor functor $\mathfrak{M}$ from the category of $\Z_p$-lattices in crystalline $\Gamma_K$-representations to the category of Breuil-Kisin modules over $\co_K$. It has various useful properties. To describe them, fix a crystalline $\Z_p$-representation $\Lambda$. Then:

\begin{itemize}
\item If $\Lambda = \Z_p(i)$ is the rank $1$-representation of $\Gamma_K$ attached to the $i^{\mathrm{th}}$-power of the $p$-adic cyclotomic character $\chi_p:\Gamma_K\to \Z_p^\times$, then there is a natural identification
\begin{equation}\label{bk:trivial}
\mathfrak{M}(\Z_p(i)) = \mathbf{1}(i).
\end{equation}

\item There is a canonical isomorphism of Breuil-Kisin modules:
\begin{equation}\label{bk:duality}
\mathfrak{M}(\Lambda^\vee)\xrightarrow{\simeq} \mathfrak{M}(\Lambda)^\vee,
\end{equation}
where $\Lambda^\vee:=\Hom_{\Z_p}(\Lambda,\Z_p)$. 

\item For any $i\in\Z_{\geq 0}$, there are canonical isomorphisms:
\begin{align}\label{bk:symm_ext}
\mathfrak{M}(\mathrm{Sym}^i\Lambda) \xrightarrow{\simeq} \mathrm{Sym}^i\mathfrak{M}(\Lambda);\\
\mathfrak{M}(\wedge^i\Lambda) \xrightarrow{\simeq} \wedge^i\mathfrak{M}(\Lambda)\nonumber
\end{align}
of Breuil-Kisin modules.

\item There is a canonical isomorphism of $F$-isocrystals over $\mathrm{Frac}(W)$:
	\begin{equation}\label{bk:dcris}
     \varphi^*\mathfrak{M}(\Lambda)/u\varphi^*\mathfrak{M}(\Lambda)[p^{-1}]\xrightarrow{\simeq}D_{\cris}(\Lambda)=(\Lambda\otimes_{\Z_p}B_{\cris})^{\Gamma_K}.
	\end{equation}
	
\item Equip $\varphi^*\mathfrak{M}(\Lambda)$ with the descending filtration $\Fil^\bullet\mathfrak{M}$ given by:
\[
\Fil^i\varphi^*\mathfrak{M}(\Lambda) = \{x\in\varphi^*\mathfrak{M}(\Lambda)\varphi_{\mathfrak{M}(\Lambda)}(x)\in\mathcal{E}(u)^i\mathfrak{M}(\Lambda)\}.
\] 
Then there is a canonical isomorphism of filtered $E$-vector spaces
\begin{equation}\label{bk:ddr}
(\varphi^*\mathfrak{M}(\Lambda)/\mathcal{E}(u)\varphi^*\mathfrak{M}(\Lambda))[p^{-1}]\xrightarrow{\simeq}K\otimes_{\mathrm{Frac}(W)}D_{\cris}(\Lambda) = D_{\dR}(\Lambda).
\end{equation}
Here, the left hand side is equipped with the filtration induced from $\Fil^\bullet\varphi^*\mathfrak{M}(\Lambda)$.  

\item The functor $\mathfrak{M}$ is compatible with unramified base change: If $k'/k$ is a finite extension, set $K'=K\otimes_WW(k')$, and fix an embedding $K'\hookrightarrow K_0^{\alg}$. If $\mathfrak{S}_{k'} = W(k')\pow{u}$, we obtain a functor $\mathfrak{M}_{k'}$ from $\Z_p$-lattices in crystalline representations of $\Gamma_{K'}=\Gal(K_0^{\alg}/K')$ to Breuil-Kisin modules over $\co_{K'}$ consisting of pairs $(\mathfrak{M}',\varphi_{\mathfrak{M}'})$ with $\mathfrak{M}'$ finite free over $\mathfrak{S}_{k'}$. For any crystalline $\Z_p$-representation $\Lambda$ of $\Gamma_K$, we now have a canonical isomorphism
\begin{equation}
\label{bk:unram base change}
\mathfrak{M}_{k'}(\Lambda\vert_{\Gamma_{K'}})\xrightarrow{\simeq}\mathfrak{S}_{k'}\otimes_{\mathfrak{S}}\mathfrak{M}(\Lambda)
\end{equation}
of Breuil-Kisin modules.

\end{itemize}

\subsection{}
A crucial property of the functor $\mathfrak{M}$ is what Kisin calls `the Key Lemma'. To explain this, let $\Lambda$ be as above. and let 
\[
\mathfrak{M} \coloneqq \mathfrak{M}(\Lambda)
\]
 be its associated Breuil-Kisin module. Suppose that we are given a collection of $\Gamma_K$-invariant tensors $\{s_{\alpha}\}\subset\Lambda^\otimes$, which we can view as $\Gamma_K$-equivariant maps
\[
s_{\alpha}:\Z_p \to \Lambda^\otimes.
 \]
 By~\eqref{bk:trivial},~\eqref{bk:duality} and~\eqref{bk:symm_ext}, these give rise to maps:
\[
s_{\alpha,\mathfrak{M}} : \mathbf{1} \to \mathfrak{M}^\otimes,
\]
which we can view as a collection of $\varphi$-invariant elements $\{s_{\alpha,\mathfrak{M}}\}\subset\mathfrak{M}^\otimes$. 

Set $M_{\cris} = \varphi^*\mathfrak{M}/u\varphi^*\mathfrak{M}$ and $M_{\dR} = \varphi^*\mathfrak{M}/\mathcal{E}(u)\varphi^*\mathfrak{M}$. Write $\Fil^\bullet M_{\dR}$ for the Hodge filtration on $M_{\dR}$: This is the filtration of $M_{\dR}$ of obtained by taking for each $i\in\Z$, the saturation of the image of $\Fil^i\varphi^*\mathfrak{M}$ in $M_{\dR}$. 

From $\{s_{\alpha,\mathfrak{M}}\}$, we obtain $\varphi$-invariant tensors $\{s_{\alpha,\cris}\}\subset M_{\cris}^\otimes$, as well as tensors $\{s_{\alpha,\dR}\}\subset \Fil^0M_{\dR}^\otimes$.

\begin{theorem}\label{thm:kisin_key_lemma}
Suppose that the pointwise stabilizer $G\subset\GL(\Lambda)$ of $\{s_{\alpha}\}$ is a connected reductive group over $\Z_p$. Assume that $k$ is either finite or algebraically closed. Then there is an isomorphism
\[
\mathfrak{S}\otimes_{\Z_p}\Lambda \xrightarrow{\simeq}\mathfrak{M}
\]
carrying $1\otimes s_{\alpha}$ to $s_{\alpha,\mathfrak{S}}$. for each $\alpha$. Therefore, the stabilizer in $\GL(M_{\cris})$ of $\{s_{\alpha,\cris}\}$ (resp. in $\GL(M_{\dR})$ of $\{s_{\alpha,\dR}\}$) is isomorphic to $G_{W}$ (resp. to $G_{\co_K}$). Moreover, the filtration $\Fil^\bullet M_{\dR}$ is split by a cocharacter $\mu:\mathbb{G}_{m,\co_K}\to G_{\co_K}$. 
\end{theorem}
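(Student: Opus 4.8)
The statement to be proved is Kisin's Key Lemma (Theorem~\ref{thm:kisin_key_lemma}): under the hypothesis that the stabilizer $G$ of $\{s_\alpha\}$ in $\GL(\Lambda)$ is connected reductive over $\Z_p$, one can trivialize $\mathfrak{M}=\mathfrak{M}(\Lambda)$ as a $\mathfrak{S}$-module compatibly with the tensors. I would follow the strategy that Kisin uses in the odd-$p$ case (\cite{kisin:abelian}, Key Lemma), checking that every step goes through at $p=2$ given Lau's results.

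\textbf{Step 1: Reduce to a torsor-triviality statement.} Consider the scheme
\[
T = \underline{\Isom}_{\{s_\alpha\}}\bigl(\mathfrak{S}\otimes_{\Z_p}\Lambda,\ \mathfrak{M}\bigr)
\]
of $\mathfrak{S}$-module isomorphisms matching $1\otimes s_\alpha$ with $s_{\alpha,\mathfrak{S}}$. One checks using \eqref{bk:trivial}, \eqref{bk:duality}, \eqref{bk:symm_ext} that the tensors $s_{\alpha,\mathfrak{S}}$ have the same ``type'' as the $s_\alpha$, so that \'etale-locally on $\Spec\mathfrak{S}$ the scheme $T$ is nonempty; hence $T$ is a torsor under $G_{\mathfrak{S}}$. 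Since $\mathfrak{S}=W\pow u$ is a complete regular local ring of dimension $2$ (or, after inverting nothing, a two-dimensional regular Henselian ring), and $G$ is reductive, one wants to invoke a result to the effect that $G_{\mathfrak{S}}$-torsors over $\mathfrak{S}$ are trivial. The cleanest route: $\mathfrak{S}$ has the property that $G$-torsors are trivial provided they are trivial over $\mathfrak{S}[1/p]$ and over $\mathfrak{S}/p = k\pow u$ (a ``purity'' statement for reductive torsors over regular $2$-dimensional bases, which reduces to the statement that such a torsor extends over the punctured spectrum — here one uses that $k$ is finite or algebraically closed so that $H^1(k,G)$ behaves well, plus Nisnevich's theorem / the result of Colliot-Th\'el\`ene--Sansuc on torsors over semilocal regular rings of dimension $\le 2$). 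So it suffices to trivialize $T$ after the two specializations.

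\textbf{Step 2: Triviality over $\mathfrak{S}[1/p]$ and over $\mathfrak{S}/p$.} Over $\mathfrak{S}[1/p]$, the Breuil-Kisin module becomes, after inverting $p$ and $u$, identified with the crystalline $\Gamma_K$-representation data; more usefully, by \eqref{bk:dcris} and the theory of $(\varphi,\Gamma)$- or Kisin modules, $\mathfrak{M}[1/p]$ with its tensors is free compatibly with $\{s_\alpha\}$ because over a field (or a one-dimensional regular base after further localization) every $G$-torsor is trivial ($G$ reductive, cohomology of the base vanishes). Over $k\pow u$: here is where Dieudonn\'e theory enters. Specializing $u\mapsto 0$ identifies $\mathfrak{M}/u\mathfrak{M}$ with the Dieudonn\'e module side; using Lau's classification (\cite{lau:displays}) and its compatibility with Kisin's construction (\cite{Lau2012-il}) one obtains — exactly as in odd residue characteristic — that the tensors $s_{\alpha,\cris}$ cut out a $G_W$-structure, and a $G_W$-torsor over $W$ (a complete DVR with finite or algebraically closed residue field, so $H^1$ vanishes for $G$ connected reductive by Lang's theorem / properness) is trivial; this then propagates to $k\pow u$ since $k\pow u$ is again complete local with residue field $k$.

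\textbf{Step 3: Deduce the consequences.} Given the isomorphism $\mathfrak{S}\otimes_{\Z_p}\Lambda\xrightarrow{\sim}\mathfrak{M}$ matching tensors, base-changing along $\varphi$ and reducing mod $u$ (resp. mod $\mathcal{E}(u)$) transports $G\subset\GL(\Lambda)$ to the stabilizer of $\{s_{\alpha,\cris}\}$ in $\GL(M_{\cris})$ (which becomes $G_W$) and of $\{s_{\alpha,\dR}\}$ in $\GL(M_{\dR})$ (which becomes $G_{\co_K}$). For the final assertion that $\Fil^\bullet M_{\dR}$ is split by a cocharacter $\mu:\Gm_{,\co_K}\to G_{\co_K}$: the filtration is a $\Z$-filtration on the faithful representation $M_{\dR}$ of $G_{\co_K}$ stable under the parabolic it defines; over $\co_K$ a Henselian local ring and $G$ smooth with connected reductive fibers, any parabolic subgroup has a Levi, and the filtration being ``of weights in a bounded range'' and compatible with tensors forces it to come from a cocharacter of $G_{\co_K}$ — concretely, lift the cocharacter splitting over the residue field (possible since $G$ is reductive and $\co_K$ is Henselian, cf.\ \cite{kisin:abelian}~1.4) and check it still splits the filtration after reduction, then adjust.

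\textbf{Main obstacle.} The genuinely new point at $p=2$ is Step~2's $k\pow u$ case: one must know that Kisin's Breuil-Kisin functor and Lau's Dieudonn\'e-display classification agree and interact correctly with the tensor structure when $p=2$, which is precisely the content imported from \cite{lau:displays} and \cite{Lau2012-il}; the torsor-theoretic scaffolding (Steps~1 and~3) is insensitive to the prime. So the hard part is verifying the $p$-adic Hodge-theoretic compatibility at $2$, not the group-theoretic bookkeeping.
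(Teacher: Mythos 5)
The torsor-theoretic scaffolding you outline (reduce to triviality of $T=\underline{\Isom}_{\{s_\alpha\}}(\mathfrak{S}\otimes_{\Z_p}\Lambda,\mathfrak{M})$, treat the generic and special localizations separately, extend over the closed point via Colliot-Th\'el\`ene--Sansuc, trivialize using $H^1(k,G)=0$ for $k$ finite or algebraically closed, and lift a splitting cocharacter by Henselianity) is indeed the route that Kisin takes, and the paper proves this theorem by citing precisely that argument (Corollary~1.3.5 and the argument of Corollary~1.4.3 of \cite{kisin:abelian}) without modification.

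However, your assessment of where the difficulty lies is mistaken, and Step~2 contains a genuine gap. In the $k\pow{u}=\mathfrak{S}/p$ case you invoke Lau's classification of $p$-divisible groups via Dieudonn\'e displays and its compatibility with Kisin's functor. But Theorem~\ref{thm:kisin_key_lemma} is a statement about an \emph{arbitrary} crystalline $\Z_p$-lattice $\Lambda$ and its Breuil--Kisin module $\mathfrak{M}(\Lambda)$; nothing forces $\mathfrak{M}(\Lambda)$ to have $\mathcal{E}$-height~$1$, i.e.\ to come from a $p$-divisible group. (In the Shimura-variety application the relevant $\Lambda^\otimes$ contains tensors of arbitrary Hodge--Tate weight.) Lau's theory is simply unavailable here, so Step~2 as written cannot be completed. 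In Kisin's actual argument the reduction modulo $p$ and modulo $u$ is handled by the structure theory of $\varphi$-modules over $k\pow{u}$ and the crystalline comparison isomorphism \eqref{bk:dcris}, which has nothing to do with Dieudonn\'e displays; and that argument is insensitive to the prime. The place where $p=2$ genuinely requires Lau's input is Theorem~\ref{thm:kisin_lau_p_divisible} --- the anti-equivalence between $p$-divisible groups and Breuil--Kisin modules of $\mathcal{E}$-height~$1$ --- not the Key Lemma, which is pure Breuil--Kisin-module theory and holds for all $p$ as stated by Kisin.

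Two smaller imprecisions worth flagging: your claim that triviality over $\mathfrak{S}[1/p]$ follows ``because over a field ... every $G$-torsor is trivial'' is not adequate, since $\mathfrak{S}[1/p]$ is not a field (nor even a Dedekind ring) and Kisin establishes triviality there with more care; and the phrase ``\'etale-locally on $\Spec\mathfrak{S}$ the scheme $T$ is nonempty'' is not how one shows $T$ is a torsor --- in Kisin's treatment one first exhibits nonemptiness over the generic point and over the closed point, and then \emph{extends} across the punctured spectrum.
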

\begin{proof}
This follows from Corollary 1.3.5 and the argument from Corollary 1.4.3 of~\cite{kisin:abelian}.
\end{proof}

\begin{remark}\label{rem:key lemma other way}
Although this is not explicitly stated in~\cite{kisin:abelian}, if we assume that the pointwise stabilizer $G_{\mathfrak{S}}\subset\GL(\mathfrak{M})$ is connected reductive, but we do \emph{not} assume that $G$ is so, then the proof of Proposition~1.3.4 of~\emph{loc. cit.} shows that we still have an isomorphism
\[
\mathfrak{S}\otimes_{\Z_p}\Lambda \xrightarrow{\simeq}\mathfrak{M}
\]
carrying $1\otimes s_{\alpha}$ to $s_{\alpha,\mathfrak{S}}$. for each $\alpha$: Indeed, the reductivity of $G$ is only used in Step 5 of the proof, and its use there can be replaced with that of the reductivity of $G_{\mathfrak{S}}$. In particular, since $\mathfrak{S}$ is faithfully flat over $\Z_p$, we conclude \emph{a posteriori} that $G$ is connected reductive over $\Z_p$.
\end{remark}

\subsection{}
Given an integer $a\in\Z_{\geq 0}$, a \defnword{Breuil window of level $a$} is a triple $(\mathfrak{P}_a,\Fil^1\mathfrak{P}_a,\tilde{\varphi}_{a})$, where: 

\begin{itemize}
	\item  $\mathfrak{P}_a$ is a finite free module over $\mathfrak{S}_a \coloneqq\mathfrak{S}/u^{a+1}\mathfrak{S}$;

	\item $\Fil^1\mathfrak{P}_a\subset \mathfrak{P}_a$ is a free $\mathfrak{S}_a$-submodule containing $\mathcal{E}(u)\mathfrak{P}_a$ such that $\mathfrak{P}_a/\Fil^1\mathfrak{P}_a$ is a finite free module over $\co_K/\pi^{a+1}$;

	\item $\tilde{\varphi}_a:\varphi^*\Fil^1\mathfrak{P}_a\xrightarrow{\simeq}\mathfrak{P}_a$ is an isomorphism of $\mathfrak{S}_a$-modules.
\end{itemize}

Breuil windows of level $a$ form an exact $\Z_p$-linear category for the obvious notion of morphism and short exact sequences.

\subsection{}\label{subsec:windows lau}
Breuil windows of level $a$ are a special case of a definition from~\cite[\S 2.1]{lau:displays}. Define a $\varphi$-semilinear map
\begin{align*}
\varphi_1: \mathcal{E}(u)\mathfrak{S}_a&\to \mathfrak{S}_a\\
\mathcal{E}(u)x&\mapsto \varphi(x).
\end{align*}

In the notation of \emph{loc. cit.}, the tuple 
\[
\mathscr{B}_a = (\mathfrak{S}_a,\mathcal{E}(u)\mathfrak{S}_a,\co_K/\pi^{a+1},\varphi,\varphi_1)
\]
is a \emph{lifting frame}. Lau considers the category of windows over $\mathscr{B}_a$. Windows are tuples $(P,Q,F,F_1)$, where $P$ is a free $\mathfrak{S}_a$-module, $Q\subset P$ is a $\mathfrak{S}_a$-submodule such that $P/Q$ is finite free over $\co_K/\pi^{a+1}$,\footnote{Note that, since $\mathcal{E}(u)$ is a non zero divisor, this implies that $Q$ is necessarily free over $\mathfrak{S}_a$.} $F:P\to P$ a $\varphi$-semilinear map, and $F_1:Q \to P$ is another $\varphi$-semilinear map satisfying
\[
F_1(\mathcal{E}(u)\cdot m) = F(m),
\]
for all $m\in Q$, and whose image generates $P$ as an $\mathfrak{S}_a$-module. Since $\mathcal{E}(u)$ is not a zero divisor, we see that $F$ is uniquely determined by $F_1$, and so the category of windows over $\mathscr{B}_a$ is equivalent to the category of triples $(P,Q,F_1)$, which is simply the category of Breuil windows of level $a$.

\subsection{}
Given a $p$-divisible group $\mathcal{H}$ over a $p$-adically complete ring $R$, we will consider the contravariant Dieudonn\'e $F$-crystal $\mathbb{D}(\mathcal{H})$ (see for instance~\cite{bbm:cris_ii}). 

Given any nilpotent thickening $R'\to R$, whose kernel is equipped with divided powers, we can evaluate $\mathbb{D}(\mathcal{H})$ on $R'$ to obtain a finite projective $R'$-module $\mathbb{D}(\mathcal{H})(R')$ (this construction depends on the choice of divided power structure, which will be specified or evident from context). If $R'$ admits a Frobenius lift $\varphi:R'\to R'$, then we get a canonical map
\[
\varphi:\varphi^*\mathbb{D}(\mathcal{H})(R')\to \mathbb{D}(\mathcal{H})(R')
\]
obtained from the $F$-crystal structure on $\mathbb{D}(\mathcal{H})$. 

An example of a (formal) divided power thickenings is any surjection of the form $R'\to R'/pR'$, where we equip $pR'$ with the canonical divided power structure induced from that on $p\Z_p$. Another example is the surjection $S\to \co_K$ from~\eqref{subsec:mfS and S}.

The evaluation on the trivial thickening $R\to R$ gives us a projective $R$-module $\mathbb{D}(\mathcal{H})(R)$ of finite rank equipped with a short exact sequence of projective $R$-modules:
\[
0 \to (\Lie \mathcal{H})^\vee\to \mathbb{D}(\mathcal{H})(R) \to \Lie\mathcal{H}^\vee\to 0,
\]
where $\mathcal{H}^\vee$ is the Cartier dual of $\mathcal{H}$.

We will set
\[
\Fil^1\mathbb{D}(\mathcal{H})(R)\coloneqq (\Lie \mathcal{H})^\vee\subset \mathbb{D}(\mathcal{H})(R).
\]
The associated descending $2$-step filtration $\Fil^\bullet\mathbb{D}(\mathcal{H})(R)$ concentrated in degrees $0$ and $1$ will be called the \defnword{Hodge filtration}. Sometimes, we will abuse terminology and refer to the summand $\Fil^1\mathbb{D}(\mathcal{H})(R)$ itself as the Hodge filtration.

\subsection{}
We will say that a Breuil-Kisin module $\mathfrak{M}$ has \emph{$\mathcal{E}$-height $1$} if the isomorphism $\varphi_{\mathfrak{M}}$ arises from a map $\varphi^*\mathfrak{M}\to\mathfrak{M}$ whose cokernel is killed by $\mathcal{E}(u)$. Write $\mathrm{BT}_{/\mathfrak{S}}$ for the category of Breuil-Kisin modules of $\mathcal{E}$-height $1$.

Note that, for any $p$-divisible group $\mathcal{H}$ over $\co_K$, the Breuil-Kisin module $\mathfrak{M}(T_p(\mathcal{H})^\vee)$ has $\mathcal{E}$-height $1$. Here, $\mathfrak{M}$ is the functor from~\eqref{subsec:kisin_functor}.

With each Breuil-Kisin module $\mathfrak{M}$, we can functorially associate a triple $(\mathfrak{P},\Fil^1\mathfrak{P},\tilde{\varphi}_{\mathfrak{P}})$, where $\mathfrak{P} = \varphi^*\mathfrak{M}$; $\Fil^1 \mathfrak{P}	\subset \mathfrak{P}$ is $\mathfrak{M}$, viewed as a submodule of $\mathfrak{P}$ via the unique map $V_{\mathfrak{M}}:\mathfrak{M}\to \varphi^*\mathfrak{M}$ whose composition with $\varphi_{\mathfrak{M}}$ is multiplication by $\mathcal{E}(u)$; and $\tilde{\varphi}_{\mathfrak{P}}:\varphi^*\Fil^1 \mathfrak{P}\xrightarrow{\simeq}\mathfrak{P}$ is the obvious isomorphism.

Observe that $\coker\tilde{\varphi}_{\mathfrak{P}}$ is free over $\co_K = \mathfrak{S}/\mathcal{E}(u)\mathfrak{S}$, and that $\mathcal{E}(u)$ is a non zero divisor in $\mathfrak{S}_a$. From this, it follows that the natural map
\[
\Fil^1 \mathfrak{P}\otimes_{\mathfrak{S}}\mathfrak{S}_a\to \mathfrak{P}\otimes_{\mathfrak{S}}\mathfrak{S}_a
\]
of finite free $\mathfrak{S}_a$-modules is injective, with cokernel finite free over $\co_K/\pi^{a+1}$.

Therefore, reducing the triple $(\mathfrak{P},\Fil^1\mathfrak{P},\tilde{\varphi}_{\mathfrak{P}})$ mod $u^{a+1}$ gives us a canonical functor
\[
\mathcal{B}^{\co_K}_{\co_K/\pi^{a+1}}: \mathrm{BT}_{/\mathfrak{S}} \to \mathrm{BT}_{/\mathfrak{S}_a}.
\]

Similarly, for any $a,b\in \Z_{\geq 0}$ with $b\geq a$, we also have a canonical reduction functor
\[
\mathcal{B}^{\co_K/\pi^{b+1}}_{\co_K/\pi^{a+1}}: \mathrm{BT}_{/\mathfrak{S}_b} \to \mathrm{BT}_{/\mathfrak{S}_a}
\]
obtained by reducing triples in $\mathrm{BT}_{/\mathfrak{S}_b}$ modulo $u^{a+1}$.

\subsection{}

For any commutative ring $R$, write $(p\mathrm{-div})_{R}$ for the category of $p$-divisible groups over $R$. Given a map $R\to R'$ of commutative rings, Write $\mathcal{B}^R_{R'}$ for the base change functor from $(p\mathrm{-div})_{R}$ to $(p\mathrm{-div})_{R'}$.

\begin{theorem}\label{thm:kisin_lau_p_divisible}
There are exact anti-equivalences of categories
\begin{align*}
\mathfrak{M}:\;(p\mathrm{-div})_{\co_K} &\xrightarrow{\simeq} \mathrm{BT}_{/\mathfrak{S}};\\
\mathfrak{P}_a:\; (p\mathrm{-div})_{\co_K/\pi^{a+1}}&\xrightarrow{\simeq} \mathrm{BT}_{/\mathfrak{S}_a}
\end{align*}
with the following properties:
\begin{enumerate}
    \item\label{kisin_lau:reduction}For each $a\in \Z_{\geq 0}$, we have a canonical isomorphism of functors
    \begin{align*}
     \mathfrak{P}_a\circ \mathcal{B}^{\co_K}_{\co_K/\pi^{a+1}}\xrightarrow{\simeq} \mathcal{B}^{\co_K}_{\co_K/\pi^{a+1}}\circ \mathfrak{M}
    \end{align*}
    from $(p\mathrm{-div})_{\co_K}$ to $\mathrm{BT}_{/\mathfrak{S}_a}$, and if $b\geq a$, we have a canonical isomorphism of functors
     \begin{align*}
     \mathfrak{P}_a\circ \mathcal{B}^{\co_K/\pi^{b+1}}_{\co_K/\pi^{a+1}}\xrightarrow{\simeq}\mathcal{B}^{\co_K/\pi^{b+1}}_{\co_K/\pi^{a+1}}\circ\mathfrak{P}_b
    \end{align*}
    from $(p\mathrm{-div})_{\co_K/\pi^{b+1}}$ to $\mathrm{BT}_{/\mathfrak{S}_a}$.

    \item\label{kisin_lau:compatibility}For each $p$-divisible group $\mathcal{H}$ over $\co_K$ there is a canonical isomorphism
    \[
     \mathfrak{M}(\mathcal{H}) \xrightarrow{\simeq} \mathfrak{M}(T_p(\mathcal{H})^\vee)
    \]
    in $\mathrm{BT}_{/\mathfrak{S}}$. 

	\item\label{kisin:cris}The $\varphi$-equivariant composition
\[
    \varphi^*\mathfrak{M}(\mathcal{H})/u\varphi^*\mathfrak{M}(\mathcal{H})  
    \xrightarrow[\simeq]{\eqref{bk:dcris}}	D_{\cris}(T_p(\mathcal{H})^\vee)	\xrightarrow{\simeq}	\mathbb{D}(\mathcal{H})(W)[p^{-1}]
\]
	maps $\varphi^*\mathfrak{M}(\mathcal{H})/u\varphi^*\mathfrak{M}(\mathcal{H})$ isomorphically onto $\mathbb{D}(\mathcal{H})(W)$. 
	\item\label{kisin:derham}The filtered isomorphism
	\begin{align*}
    \varphi^*\mathfrak{M}(\mathcal{H})/\mathcal{E}(u)\varphi^*\mathfrak{M}(\mathcal{H})[p^{-1}]&\xrightarrow[\simeq]{\eqref{bk:ddr}}K\otimes_{\mathrm{Frac}(W)}D_{\cris}(T_p(\mathcal{H})^\vee)\\
    &\xrightarrow{\simeq}\mathbb{D}(\mathcal{H})(\co_K)[p^{-1}]
	\end{align*}
	maps $\varphi^*\mathfrak{M}(\mathcal{H})/\mathcal{E}(u)\varphi^*\mathfrak{M}(\mathcal{H})$ isomorphically onto $\mathbb{D}(\mathcal{H})(\co_K)$, and hence maps $\Fil^1\mathfrak{M}(\mathcal{H})$ onto $\Fil^1 \mathbb{D}(\mathcal{H})(\co_K)$.
	\item\label{kisin:breuil}There is a canonical $\varphi$-equivariant isomorphism
	\[
    S\otimes_{\varphi,\mathfrak{S}}\mathfrak{M}(\mathcal{H})\xrightarrow{\simeq}\mathbb{D}(\mathcal{H})(S)
	\] 
	whose reduction along the map $S\to\co_K$ gives the filtration preserving isomorphism in~\ref{kisin:derham}.
\end{enumerate}
\end{theorem}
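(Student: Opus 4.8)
The plan is to build the two functors and verify their properties by reduction to known classification theorems, chiefly Lau's equivalence over lifting frames in \cite{lau:displays} and Kisin's theory of Breuil-Kisin modules in \cite{kisin:abelian}, \cite{kisin:fcrystals}. For the first functor, $\mathfrak{M}:(p\text{-div})_{\co_K}\xrightarrow{\simeq}\mathrm{BT}_{/\mathfrak{S}}$, I would compose the functor of \S\ref{subsec:kisin_functor} applied to $T_p(\mathcal{H})^\vee$ with the fact (Kisin) that for a $p$-divisible group the associated Breuil-Kisin module has $\mathcal{E}$-height $1$; the anti-equivalence onto $\mathrm{BT}_{/\mathfrak{S}}$ is then essentially \cite[Theorem (1.2.1)]{kisin:abelian} together with the comparison with Dieudonn\'e theory. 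Defining $\mathfrak{M}(\mathcal{H})$ directly as the Breuil-Kisin module underlying the $\varphi$-module $S\otimes_{\varphi,\mathfrak{S}}(-)$-descent of $\mathbb{D}(\mathcal{H})(S)$ — i.e. via Breuil's classification — is an alternative, and in fact \ref{kisin:breuil} records that these agree. For the second functor, $\mathfrak{P}_a:(p\text{-div})_{\co_K/\pi^{a+1}}\xrightarrow{\simeq}\mathrm{BT}_{/\mathfrak{S}_a}$, I would invoke Lau's equivalence between $p$-divisible groups over a ring and windows over an associated lifting frame, using the identification in \S\ref{subsec:windows lau} of windows over $\mathscr{B}_a$ with Breuil windows of level $a$, hence with $\mathrm{BT}_{/\mathfrak{S}_a}$ once one checks (as in the passage preceding the theorem) that the $\mathcal{E}$-height $1$ and freeness conditions match the window axioms.

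The compatibility isomorphisms in \ref{kisin_lau:reduction} should follow from the functoriality of Lau's construction with respect to morphisms of lifting frames. I would exhibit frame morphisms $\mathscr{B}\to\mathscr{B}_a$ (where $\mathscr{B}$ is the frame with $\mathfrak{S}$, $\mathcal{E}(u)\mathfrak{S}$, $\co_K$ underlying $\mathrm{BT}_{/\mathfrak{S}}$) and $\mathscr{B}_b\to\mathscr{B}_a$ for $b\geq a$, given by reduction mod $u^{a+1}$, and check that the window functors intertwine base change of $p$-divisible groups with the reduction-of-triples functors $\mathcal{B}^{\co_K}_{\co_K/\pi^{a+1}}$ and $\mathcal{B}^{\co_K/\pi^{b+1}}_{\co_K/\pi^{a+1}}$ defined earlier in the excerpt. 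This is where one must be careful that Lau's crystalline interpretation of the window attached to $\mathcal{H}$ really is the evaluation of $\mathbb{D}(\mathcal{H})$ on the relevant divided-power thickening, so that reduction on the $p$-divisible group side matches reduction of the $\mathfrak{S}$-module triple.

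For the remaining parts, \ref{kisin_lau:compatibility} is the statement that the direct (via Breuil/Lau) and indirect (via \S\ref{subsec:kisin_functor} applied to $T_p(\mathcal{H})^\vee$) constructions of the Breuil-Kisin module agree; I would prove this by comparing both with the Dieudonn\'e crystal evaluated on $S$, using that $S\hookrightarrow$ a period ring receives a map from $\mathfrak{S}$ and that Kisin's functor is characterized by its crystalline realization. Parts \ref{kisin:cris}, \ref{kisin:derham}, \ref{kisin:breuil} then amount to matching the specializations of $\varphi^*\mathfrak{M}(\mathcal{H})$ along $u\mapsto 0$, along $\mathcal{E}(u)\mapsto 0$, and along $\mathfrak{S}\to S$ with the evaluations $\mathbb{D}(\mathcal{H})(W)$, $\mathbb{D}(\mathcal{H})(\co_K)$, $\mathbb{D}(\mathcal{H})(S)$: here I would feed the comparison isomorphisms \eqref{bk:dcris}, \eqref{bk:ddr} of \S\ref{subsec:kisin_functor} into the known compatibility of $D_{\cris}$, $D_{\dR}$ of $T_p(\mathcal{H})^\vee$ with Dieudonn\'e theory (e.g. via \cite{kisin:fcrystals} or \cite{bbm:cris_ii}), and then check that the resulting isomorphisms carry the integral structure $\varphi^*\mathfrak{M}(\mathcal{H})/u\varphi^*\mathfrak{M}(\mathcal{H})$ (resp. mod $\mathcal{E}(u)$, resp. $S\otimes_{\varphi,\mathfrak{S}}\mathfrak{M}(\mathcal{H})$) onto the integral Dieudonn\'e lattice; the filtration statement in \ref{kisin:derham} follows because $\Fil^1\mathfrak{M}(\mathcal{H})=V_{\mathfrak{M}}(\mathfrak{M}(\mathcal{H}))$ reduces mod $\mathcal{E}(u)$ to $(\Lie\mathcal{H})^\vee$ by construction of the triple.

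The main obstacle is \ref{kisin:breuil} — that is, producing the $\varphi$-equivariant isomorphism $S\otimes_{\varphi,\mathfrak{S}}\mathfrak{M}(\mathcal{H})\xrightarrow{\simeq}\mathbb{D}(\mathcal{H})(S)$ compatibly with the de Rham specialization — since it is precisely the point where Kisin's $p$-adic Hodge-theoretic functor and Lau's Dieudonn\'e-display functor must be glued, and this gluing is exactly the content of \cite{Lau2012-il} one needs to import and verify is stated in the generality required here (arbitrary perfect $k$, $p=2$). Concretely, one must know that the two natural $S$-module-with-$\varphi$-structures — one coming from Breuil's original functor (hence from $\mathbb{D}(\mathcal{H})$) and one from Kisin's Breuil-Kisin module via base change along $\varphi:\mathfrak{S}\to S$ — are canonically identified, and that this identification is the one inducing the de Rham comparison after reduction along $S\to\co_K$. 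Everything else is bookkeeping with frames, divided powers, and the exactness of the functors (which can be checked after the identification with module-theoretic categories, where short exact sequences are detected on underlying modules).
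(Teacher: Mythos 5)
Your outline assembles the same ingredients the paper uses --- Lau's theory of windows over lifting frames for $\mathfrak{P}_a$ and for assertion \ref{kisin_lau:reduction}, Lau's crystalline comparison for \ref{kisin:breuil}, the Kisin--Lau compatibility of \cite{Lau2012-il} for \ref{kisin_lau:compatibility}, and the properties \eqref{bk:dcris}, \eqref{bk:ddr} of the lattice functor of \eqref{subsec:kisin_functor} for \ref{kisin:cris} and \ref{kisin:derham} --- and you correctly single out the \cite{Lau2012-il} compatibility as the essential import. The one place your primary route would fail is the construction of $\mathfrak{M}$ itself. Kisin's Theorem (1.2.1) of \cite{kisin:abelian} gives only a fully faithful functor on $\Z_p$-lattices in crystalline representations; neither essential surjectivity onto $\mathrm{BT}_{/\mathfrak{S}}$ nor the statement that $\mathcal{H}\mapsto\mathfrak{M}(T_p(\mathcal{H})^\vee)$ is an anti-equivalence is a formal consequence of it. For $p>2$ those are (1.4.2)--(1.4.3) of \cite{kisin:abelian}, but at $p=2$ --- the case this theorem exists to cover --- that classification is available from \emph{loc.~cit.} only for connected $p$-divisible groups (the general $p=2$ case being \cite{kim:2-adic} or Lau), so invoking it would be circular here.

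The paper therefore takes what you call the ``alternative'' as the \emph{definition}: both $\mathfrak{M}$ and $\mathfrak{P}_a$ are the Cartier duals of Lau's exact equivalences (Theorem 6.6 and Corollary 5.4 of \cite{lau:displays}), assertion \ref{kisin_lau:reduction} comes from the compatibility of those equivalences under reduction of frames, and assertion \ref{kisin:breuil} is Proposition 7.1 of \emph{loc.~cit.} The identification with the Kisin-theoretic module $\mathfrak{M}(T_p(\mathcal{H})^\vee)$ is then not presupposed but is precisely assertion \ref{kisin_lau:compatibility}, supplied by the main result of \cite{Lau2012-il} (your ``gluing''), after which \ref{kisin:cris} and \ref{kisin:derham} follow by the bookkeeping you describe. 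So the ingredients and the identification of the crucial input are right, but the logical order must be Lau first, Kisin's lattice functor second; note also, as the paper does, that this compatibility corrects a Tate-twist discrepancy in \cite[Theorem (1.4.2)]{kisin:abelian}.
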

\begin{proof}
When $p>2$ most of this, except for the existence of the equivalences $\mathfrak{P}_a$, follows from (1.4.2) and (1.4.3) of~\cite{kisin:abelian}. When $p=2$, again, most of this follows~\cite{kim:2-adic}. However, the results of Lau from~\cite{lau:displays} lead to a uniform proof in all cases, and, more importantly, also at the same time give us the functors $\mathfrak{P}_a$ classifying $p$-divisible groups over the artinian rings $\co_K/\pi^{a+1}$.

To begin, the existence of the functors $\mathfrak{M}$ and $\mathfrak{P}_a$, as well as the compatibility between them asserted in~\ref{kisin_lau:reduction} follows from Theorem 6.6 and Corollary 5.4 of~\cite{lau:displays}\footnote{Our functors are the Cartier duals of the exact equivalences of categories defined in \cite{lau:displays}.}. Assertion~\ref{kisin:breuil} is Proposition 7.1 of \emph{loc. cit.}, and assertion~\ref{kisin_lau:compatibility} follows from the main result of~\cite{Lau2012-il}. Note that this corrects the claim in~\cite[Theorem (1.4.2)]{kisin:abelian}, which is off by a Tate twist; see~\cite[Theorem (1.1.6)]{kisin:lr}.

The remaining assertions now follow from the properties of the functor $\mathfrak{M}$.

\end{proof}



\section{Deformation theory}\label{sec:deformation}

Suppose now that $k$ is a finite field. Let $\mathcal{G}_0$ be a $p$-divisible group over $k$, and let $M_0 = \mathbb{D}(\mathcal{G}_0)(W)$ be the Dieudonn\'e $F$-crystal associated with it. Suppose that we have $\varphi$-invariant tensors $\{\bm{s}_{\alpha,0}\}\subset M_0^\otimes$, whose stabilizer is a connected reductive subgroup $G\subset \GL(M_0)$, and such that the Hodge filtration 
\[
\Fil^1(M_0\otimes k) \subset M_0\otimes k  = \mathbb{D}(\mathcal{G}_0)(k)
\]
is split by a cocharacter $\mu_0:\Gm \to G_{k}$ (see~\cite[\S 1.1]{kisin:abelian} for the terminology).

\subsection{}\label{subsec:M_0props}
With $M_0$ we can associate a Breuil window of level $0$ $(\mathfrak{P}_0,\Fil^1 \mathfrak{P}_0,\tilde{\varphi}_0)$ as follows: Let $V_{M_0}: M_0\to \sigma^*M_0$ be the Verschiebung, so that $\varphi_{M_0}\circ V_{M_0}$ is the multiplication by $p$ endomorphism of $M_0$. We take $\mathfrak{P}_0 = M_0$ and $\Fil^1 \mathfrak{P}_0 = (\sigma^{-1})^*M_0$, which we view as a submodule of $\mathfrak{P}_0$ via the map $(\sigma^{-1})^*V_{M_0}$. We have
\[
\tilde{\varphi}_0: \sigma^*\Fil^1\mathfrak{P}_0 = M_0 \xrightarrow{\simeq} \mathfrak{P}_0.
\]

The sequence
\[
M_0\otimes k \xrightarrow{V_{M_0}\otimes 1}\sigma^*M_0\otimes k\xrightarrow{\varphi_{M_0}\otimes 1}M_0\otimes k
\]
is exact. Moreover, $\Fil^1(M_0\otimes k)\subset M_0\otimes k$ is characterized by the property that $\sigma^*\Fil^1(M_0\otimes k)$ is the kernel of $\varphi_{M_0}\otimes 1$.

Therefore, we find that we could have also defined $\Fil^1 \mathfrak{P}_0$ to be the pre-image of $\Fil^1(M_0\otimes k)$ in $\mathfrak{P}_0 = M_0$.

\subsection{}
Suppose that we are given a finite extension $L/K_0$ contained in $K_0^\alg$, and a lift $\mathcal{G}$ of $\mathcal{G}_0$ to a $p$-divisible group over $\co_L$. Let $L_0\subset L$ be the maximal unramified subextension with residue field $k'$, and fix a uniformizer $\pi_L\in L$ with associated monic Eisenstein $\mathcal{E}_L(u)\in L_0[u]$, so that the theory of \S\ref{sec:lau_stuff} applies with $K,K_0,\mathcal{E}(u)$ replaced with $L,L_0,\mathcal{E}_L(u)$. Therefore, we can associate with $\mathcal{G}$ the object $\mathfrak{M}(\mathcal{G})\in \mathrm{BT}_{/\mathfrak{S}_{k'}}$, where
\[
\mathfrak{S}_{k'} = W(k')\pow{u}.
\]
Set $\mathfrak{P}(\mathcal{G}) = \varphi^*\mathfrak{M}(\mathcal{G})$. Then we have canonical isomorphisms
\[
\mathfrak{P}(\mathcal{G})/u \mathfrak{P}(\mathcal{G})\xrightarrow{\simeq}M_0\;;\; \mathfrak{P}(\mathcal{G})/\mathcal{E}_L(u)\mathfrak{P}(\mathcal{G})\xrightarrow{\simeq}\mathbb{D}(\mathcal{G})(\co_L).
\]

By a standard argument\footnote{This is Dwork's trick: Take any lift, and repeatedly apply $\varphi$ to make it $\varphi$-invariant in the limit. See~\cite[(1.5.5)]{kisin:abelian}.}, the $\varphi$-invariant tensors 
\[
\{1\otimes \bm{s}_{\alpha,0}\}\subset W(k')\otimes_WM_0^\otimes
\]
lift uniquely to $\varphi$-invariant tensors
\[
\{\bm{s}_{\alpha,\mathfrak{S},\mathcal{G}}\}\subset L_0\pow{u}\otimes_{\mathfrak{S}_{k'}}\mathfrak{P}(\mathcal{G})^\otimes.
\]
Here, we are viewing $\mathfrak{P}(\mathcal{G})$ as an object in the category of Breuil-Kisin modules over $\co_L$, and it is over this category that the ind-object $\mathfrak{P}(\mathcal{G})^\otimes$ is defined.

\begin{defn}
\label{defn:adapted}
We will say that $\mathcal{G}$ is \defnword{$G$-adapted} or \defnword{adapted to $G$} if the following conditions hold: 

\begin{enumerate}
 \item The tensors $\{\bm{s}_{\alpha,\mathfrak{S},\mathcal{G}}\}$ lie in $\mathfrak{P}(\mathcal{G})^\otimes$;
 \item There is an isomorphim
 \[
 \mathfrak{S}_{k'}\otimes_WM_0 \xrightarrow{\simeq}\mathfrak{P}(\mathcal{G})
 \]
 carrying, for each index $\alpha$, $1\otimes\bm{s}_{\alpha,0}$ to $\bm{s}_{\alpha,\mathfrak{S},\mathcal{G}}$, so that the stabilizer of $\{s_{\alpha,\mathfrak{S},\mathcal{G}}\}$ in $\GL(\mathfrak{P}(\mathcal{G}))$ can be identified with $G_{\mathfrak{S}_{k'}}$;
 \item The Hodge filtration on 
 \[
  \mathbb{D}(\mathcal{G})(\co_L)  = \co_L\otimes_{\varphi,\mathfrak{S}_{k'}}\mathfrak{P}(\mathcal{G})
 \]
 is split by a cocharacter $\mu_{\mathcal{G}}:\Gm\to G_{\co_L}$ lifting $\mu_0$.
 \end{enumerate} 
\end{defn}






We can now state the main technical result of this note.
\begin{proposition}
\label{prop:adapted}
Let $R^{\mathrm{univ}}$ be the universal deformation ring for $\mathcal{G}_0$ over $W$. Let $P_k\subset G_k$ be the stabilizer of the Hodge filtration
\[
\Fil^1(M_0\otimes k)\subset M_0\otimes k.
\]

Then there is a quotient $R^{\mathrm{univ}}_G$ of $R^{\mathrm{univ}}$ that is formally smooth over $W$ of dimension 
\[
d = \dim G_k-\dim P_k, 
\]
and is characterized by the following property: Given a finite extension $L/K_0$, a map $x:R^{\mathrm{univ}}\to \co_L$ factors through $R^{\mathrm{univ}}_G$ if and only if the corresponding lift $\mathcal{G}_x$ of $\mathcal{G}_0$ over $\co_L$ is $G$-adapted.
\end{proposition}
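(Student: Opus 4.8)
The strategy is to build $R^{\mathrm{univ}}_G$ directly from the Grothendieck--Messing / Breuil-window picture, using the cocharacter $\mu_0$ to pin down a canonical lift and a distinguished family of deformations, and then to show that this family is exactly the locus cut out by the $G$-adapted condition. First I would use Theorem~\ref{thm:kisin_lau_p_divisible}\ref{kisin:breuil} to identify the deformation functor of $\mathcal{G}_0$ over $W$-algebras with the functor of lifts of the Hodge filtration: for a small extension (or, more robustly, for any $p$-nilpotent base), deforming $\mathcal{G}_0$ is the same as lifting $\Fil^1(M_0\otimes k)\subset M_0\otimes k$ to a direct summand of the corresponding free module, by Grothendieck--Messing combined with the crystalline description $\mathbb{D}(\mathcal{G})(\co_L) = \co_L\otimes_{\varphi,\mathfrak{S}_{k'}}\mathfrak{P}(\mathcal{G})$. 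This realizes $R^{\mathrm{univ}}$ as (the completion at a point of) the Grassmannian of such filtrations; its relative dimension over $W$ is $\dim \GL(M_0)_k - \dim P'_k$ where $P'_k$ is the parabolic in $\GL$ stabilizing $\Fil^1(M_0\otimes k)$, i.e.\ $\dim M_0^{(1)}\cdot\dim M_0^{(0)}$.

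Next I would cut out $R^{\mathrm{univ}}_G$ as the closed formal subscheme corresponding to those filtrations that are ``$G$-split'': concretely, lift $\mu_0:\Gm\to G_k$ to a cocharacter $\mu:\Gm\to G_W$ (possible by smoothness of $G$ over $\Z_p$ and standard lifting of cocharacters, after base change to $W$), and let $R^{\mathrm{univ}}_G$ pro-represent the subfunctor of filtrations of the form $g\cdot\Fil^1_\mu$ for $g$ ranging over the ``opposite unipotent'' $U^-_G\subset G$. Because $\mu$ induces a grading on $\mathfrak{g}$ and hence a semidirect product decomposition $P_G\cdot U^-_G$ of an open cell of $G$, this subfunctor is represented by a formal affine space of relative dimension $\dim U^-_G = \dim\mathfrak{g}^{-1}_\mu = \dim G_k - \dim P_k$ over $W$. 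The key points to verify here are: (a) this subfunctor is a \emph{closed} subfunctor of the Grassmannian deformation functor (it is the intersection of the $G$-orbit of $\mu$ with the deformation space, which is closed because $G$ is reductive hence its orbits on the relevant flag variety are locally closed and the one containing the reduction mod $\pi$-point is the open one containing $\mu_0$); (b) it is formally smooth over $W$ of the stated dimension, which follows from the explicit affine-space parametrization by $U^-_G$.

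Finally I would match this up with the $G$-adapted condition. Given $x:R^{\mathrm{univ}}\to\co_L$, the lift $\mathcal{G}_x$ determines $\mathfrak{P}(\mathcal{G}_x)$ and the tensors $\{\bm{s}_{\alpha,\mathfrak{S},\mathcal{G}_x}\}$; by Dwork's trick these are the unique $\varphi$-invariant lifts of $\{1\otimes\bm{s}_{\alpha,0}\}$, so the content of conditions~(1) and~(2) of Definition~\ref{defn:adapted} is exactly that the $\varphi$-structure on $\mathfrak{P}(\mathcal{G}_x)$ is ``$G$-valued'' after the canonical trivialization $\mathfrak{S}_{k'}\otimes_W M_0\xrightarrow{\simeq}\mathfrak{P}(\mathcal{G}_x)$. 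The Breuil-window formalism of \S\ref{subsec:windows lau} shows that such a $G$-valued $\varphi$-structure is, up to the unique choice forced by $\varphi$-invariance of the tensors, equivalent to the datum of the Hodge filtration $\Fil^1\subset\mathbb{D}(\mathcal{G}_x)(\co_L)$ \emph{together with} the requirement that this filtration be $G$-split and lift $\mu_0$ --- and that is condition~(3). So $x$ factors through $R^{\mathrm{univ}}_G$ (i.e.\ the Hodge filtration of $\mathcal{G}_x$ lies in the $U^-_G$-cell of $\mu$) precisely when $\mathcal{G}_x$ is $G$-adapted; conversely any $G$-adapted lift gives a point of $R^{\mathrm{univ}}_G$ by construction.

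\textbf{Main obstacle.} The step I expect to be hardest is (b)/(a) above: showing that the ``$G$-split filtrations'' subfunctor is a well-defined \emph{closed} formal subscheme of $R^{\mathrm{univ}}$ that is formally smooth of the right dimension, rather than just set-theoretically the image of $U^-_G$. This requires care because one must work integrally over $W$ (not just over $K_0$), and the reductivity of $G$ over $\Z_p$ — hence the existence of the cocharacter lift $\mu$ and the integral cell decomposition $G \supset P_G U^-_G$ open — is precisely what makes this work; this is also where the hypothesis that $k$ is finite (so Theorem~\ref{thm:kisin_key_lemma} applies, giving that $\mu_0$ exists and that the tensors have reductive stabilizer integrally) gets used. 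Once the subfunctor is known to be pro-represented by a formally smooth $W$-algebra of dimension $d$, the identification with the $G$-adapted locus is essentially a bookkeeping exercise tracing through the equivalences of Theorem~\ref{thm:kisin_lau_p_divisible} and Dwork's trick.
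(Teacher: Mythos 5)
Your plan has a genuine gap at its core, and it sits exactly where the paper has to work hardest. You treat the equivalence ``$\mathcal{G}_x$ is $G$-adapted $\iff$ the Hodge filtration of $\mathbb{D}(\mathcal{G}_x)(\co_L)$ lies in the $U^-_G$-cell lifting $\mu_0$'' as a bookkeeping exercise, but conditions (1) and (2) of Definition~\ref{defn:adapted} are \emph{integral} statements about the Breuil--Kisin module: the tensors $\bm{s}_{\alpha,\mathfrak{S},\mathcal{G}_x}$ produced by Dwork's trick a priori live only in $L_0\pow{u}\otimes\mathfrak{P}(\mathcal{G}_x)^\otimes$, and neither their integrality nor the existence of a $\mathfrak{S}_{k'}$-trivialization matching them with $1\otimes\bm{s}_{\alpha,0}$ is a consequence of condition (3) on the filtration (nor conversely). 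Equally, your first step -- identifying the deformation functor with a Grassmannian of filtration lifts ``by Grothendieck--Messing, for any $p$-nilpotent base'' -- presupposes a canonical, Frobenius- and tensor-compatible trivialization of the crystal over the universal base; no such trivialization is available a priori, and at $p=2$ the Grothendieck--Messing/$S$-module machinery that Kisin used to run this kind of argument (over thickenings like $S\to\co_L$) is precisely what breaks, which is the raison d'\^etre of this paper's use of Lau's theory. So both implications in the characterization of $R^{\mathrm{univ}}_G$ are left unproved in your outline: that points of the candidate quotient are $G$-adapted, and that every $G$-adapted point factors through it.

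For comparison, the paper does not cut out $R_G$ as a subfunctor of filtrations. It builds $R_G$ by Faltings' explicit construction over the completion of $U^{\mathrm{op}}_G$ (Proposition~\ref{prop:dieudonne faltings construction}), with a versality argument identifying the ambient $U^{\mathrm{op}}$-family with $R^{\mathrm{univ}}$; it then proves that $\co_L$-points of $R_G$ are $G$-adapted via $p$-adic \'etale tensors, parallel transport to the base point, Remark~\ref{rem:key lemma other way} and the Key Lemma (Proposition~\ref{prop:RG adapted}); and it proves the converse not by any pointwise equivalence but by a counting argument: Lau's classification over $\co_L/\pi_L^{a+1}$ (Theorem~\ref{thm:kisin_lau_p_divisible}) bounds the number of $G$-adapted $\co_L/\pi_L^{a+1}$-points by $q^{ad}$ (Lemma~\ref{lem:point count}), which is already the number of points of the formally smooth $d$-dimensional $R_G$, forcing equality (Lemma~\ref{lem:one implication}). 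If you want to repair your approach, you would have to supply substitutes for both of these steps; the counting argument via Breuil windows over the artinian quotients is the replacement for the filtration-theoretic deformation argument you are implicitly assuming.
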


When $p>2$ or when $\mathcal{G}_0$ is connected, this is the content of~\cite[(1.5.8)]{kisin:abelian}. For the general case, we will essentially repeat the same line of reasoning, except that we will replace the use of Grothendieck-Messing style crystalline deformation theory with the results of Lau summarized in Theorem~\ref{thm:kisin_lau_p_divisible}.

\subsection{}
For the proof of Proposition~\ref{prop:adapted}, we will need a notion of $G$-adaptedness for lifts $\mathcal{G}_a$ of $\mathcal{G}_0$ over $\co_L/\pi_L^{a+1}$. This is defined just as for lifts over $\co_L$. Set $\mathfrak{S}_{k',a} = \mathfrak{S}_{k'}/u^{a+1}\mathfrak{S}_{k'}$. 

Associated with the lift is the object
\[
\mathfrak{P}_a(\mathcal{G}_a) = (\mathfrak{P}_a(\mathcal{G}_a),\Fil^1 \mathfrak{P}_a(\mathcal{G}_a),\tilde{\varphi}_a)
\]
in $\mathrm{BT}_{/\mathfrak{S}_{k',a}}$.

Now, note that $\tilde{\varphi}_a$ induces an isomorphism
\[
\varphi^*\mathfrak{P}_a(\mathcal{G}_a)[\mathcal{E}_L(u)^{-1}]\xrightarrow{\simeq}\mathfrak{P}_a(\mathcal{G}_a)[\mathcal{E}_L(u)^{-1}].
\]

There are now unique lifts
\[
\{\bm{s}_{\alpha,\mathfrak{S}_a,\mathcal{G}_a}\}\subset L_0\pow{u}/(u^{a+1})\otimes_{\mathfrak{S}_{k',a}}\mathfrak{P}_a(\mathcal{G}_a)^\otimes
\]
of $\{1\otimes\bm{s}_{\alpha,0}\}\subset W(k')\otimes_WM_0^\otimes$, which satisfy $\tilde{\varphi}_a(\varphi^*\bm{s}_{\alpha,\mathfrak{S}_a,\mathcal{G}_a}) = \bm{s}_{\alpha,\mathfrak{S}_a,\mathcal{G}_a}$. In other words, the tensors $\{\bm{s}_{\alpha,\mathfrak{S}_a,\mathcal{G}_a}\}$ are the unique $\varphi$-invariant lifts of $\{\bm{s}_{\alpha,0}\}$.

Set $\mathfrak{P}_{\dR}(\mathcal{G}_a) = \mathfrak{P}_a(\mathcal{G}_a)/\mathcal{E}_L(u)\mathfrak{P}_a(\mathcal{G}_a)$: This is a finite free module over $\co_L/\pi_L^{a+1}$. Equip it with the $\co_L/\pi_L^{a+1}$-linear direct summand
\[
\Fil^1\mathfrak{P}_{\dR}(\mathcal{G}_a) = \Fil^1 \mathfrak{P}_a(\mathcal{G}_a)/\mathcal{E}_L(u)\mathfrak{P}_a(\mathcal{G}_a)\subset \mathfrak{P}_{\dR}(\mathcal{G}_a).
\]
This gives a $2$-step descending filtration $\Fil^\bullet\mathfrak{P}_{\dR}(\mathcal{G}_a)$ on $\mathfrak{P}_{\dR}(\mathcal{G}_a)$ concentrated in degrees $0$ and $1$.

\begin{defn}
\label{defn:adapted_level_a}
We will say that $\mathcal{G}_a$ is \defnword{$G$-adapted} or \defnword{adapted to $G$} if the following conditions hold: 

\begin{enumerate}
 \item The tensors $\{\bm{s}_{\alpha,\mathfrak{S}_a,\mathcal{G}_a}\}$ lie in $\mathfrak{P}_a(\mathcal{G}_a)^\otimes$;
 \item There is an isomorphim
 \[
 \mathfrak{S}_{k',a}\otimes_WM_0 \xrightarrow{\simeq}\mathfrak{P}_a(\mathcal{G}_a)
 \]
 carrying, for each index $\alpha$, $1\otimes\bm{s}_{\alpha,0}$ to $\bm{s}_{\alpha,\mathfrak{S}_a,\mathcal{G}_a}$, so that the stabilizer of $\{s_{\alpha,\mathfrak{S}_a,\mathcal{G}_a}\}$ in $\GL(\mathfrak{P}_a(\mathcal{G}_a))$ can be identified with $G_{\mathfrak{S}_{k',a}}$.
 \item The filtration $\Fil^\bullet\mathfrak{P}_{\dR}(\mathcal{G}_a)$ is split by a cocharacter $\mu_{\mathcal{G}}:\Gm\to G_{\co_L/\pi_L^{a+1}}$ lifting $\mu_0$.
 \end{enumerate} 
\end{defn}

\begin{lemma}
\label{lem:point count}
Set 
\[
\mathcal{D}_G(\co_L/\pi_L^{a+1})  = \{x:R^{\mathrm{univ}}\to \co_L/\pi_L^{a+1}:\;\mathcal{G}_x\text{ is $G$-adapted}\}.
\] 
Then $\mathcal{D}_G(\co_L/\pi_L^{a+1})$ has at most $q^{ad}$ elements, where $q = \# k'$ is the size of $k'$ and $d = \dim G_k - \dim P_k$.
\end{lemma}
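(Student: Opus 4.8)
The plan is to stratify the set $\mathcal{D}_G(\co_L/\pi_L^{a+1})$ according to the reduction map $\co_L/\pi_L^{a+1}\twoheadrightarrow \co_L/\pi_L^{a}$ and to bound the size of each fiber by $q^d$, so that an induction on $a$ yields the bound $q^{ad}$ (the base case $a=0$ being the single point corresponding to $\mathcal{G}_0$ itself, since a $G$-adapted lift over $k'$ is forced to be $\mathcal{G}_0\otimes_k k'$ by the Dieudonn\'e-module description in \S\ref{subsec:M_0props} together with Definition~\ref{defn:adapted_level_a}). Fix then a $G$-adapted point $\bar x\in \mathcal{D}_G(\co_L/\pi_L^{a})$ and let $\mathcal{G}_{\bar x}$ be the corresponding lift over $\co_L/\pi_L^a$; I must bound the number of $G$-adapted lifts $x\in \mathcal{D}_G(\co_L/\pi_L^{a+1})$ reducing to $\bar x$.

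The key is to use the equivalence of categories $\mathfrak{P}_a$ from Theorem~\ref{thm:kisin_lau_p_divisible} together with the compatibility (1) of that theorem: deforming $\mathcal{G}_{\bar x}$ from $\co_L/\pi_L^a$ to $\co_L/\pi_L^{a+1}$ is the same as deforming the Breuil window $\mathfrak{P}_{a-1}(\mathcal{G}_{\bar x})\in\mathrm{BT}_{/\mathfrak{S}_{k',a-1}}$ to an object of $\mathrm{BT}_{/\mathfrak{S}_{k',a}}$. Because the transition $\mathfrak{S}_{k',a}\twoheadrightarrow\mathfrak{S}_{k',a-1}$ is a square-zero extension with kernel $u^{a}\mathfrak{S}_{k'}/u^{a+1}\mathfrak{S}_{k'}\cong \co_L/\pi_L$ (a $1$-dimensional $k'$-vector space after the choice of $\pi_L$), standard deformation theory of filtered $\varphi$-modules shows that the set of lifts of the underlying Breuil-Kisin window, once we also lift the tensors $\{\bm{s}_{\alpha,\mathfrak{S}_{a-1}}\}$ $\varphi$-equivariantly (which is automatic and unique, as recorded just before Definition~\ref{defn:adapted_level_a}), is either empty or a torsor under the $k'$-vector space
\[
\Hom_{\co_L/\pi_L}\bigl(\Fil^1\mathfrak{P}_{\dR}(\mathcal{G}_{\bar x})\otimes k',\,(\mathfrak{P}_{\dR}(\mathcal{G}_{\bar x})/\Fil^1)\otimes k'\bigr).
\]
Imposing $G$-adaptedness cuts this down: by condition (3) of Definition~\ref{defn:adapted_level_a} the lifted Hodge filtration must be split by a cocharacter into $G$, so the relevant torsor is under the corresponding graded piece of $\Lie G_{k'}$, namely $\bigl(\g/\Fil^0\g\bigr)\otimes k'$ for the filtration on $\g=\Lie G$ induced by $\mu_0$; this space has $k'$-dimension exactly $\dim G_k-\dim P_k=d$, since $P_k$ is the stabilizer of the Hodge filtration and hence has Lie algebra $\Fil^0\g$. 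Therefore each nonempty fiber of $\mathcal{D}_G(\co_L/\pi_L^{a+1})\to\mathcal{D}_G(\co_L/\pi_L^{a})$ has at most $q^{d}$ elements, and combining with the inductive hypothesis $\#\mathcal{D}_G(\co_L/\pi_L^{a})\le q^{(a-1)d}$ (and the observation that $\mathcal{D}_G(\co_L/\pi_L^{a+1})$ maps into $\mathcal{D}_G(\co_L/\pi_L^{a})$, since reduction of a $G$-adapted lift is again $G$-adapted by the compatibility in Theorem~\ref{thm:kisin_lau_p_divisible}\eqref{kisin_lau:reduction}) gives $\#\mathcal{D}_G(\co_L/\pi_L^{a+1})\le q^{ad}$, as claimed.

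The main obstacle I anticipate is making the torsor structure on $G$-adapted lifts precise: one must check that the obvious $\bigl(\g/\Fil^0\g\bigr)\otimes k'$-action on lifts of the Breuil window is simultaneously compatible with preserving the (uniquely determined) lifted tensors $\{\bm{s}_{\alpha,\mathfrak{S}_a}\}$ and with the existence of a trivialization $\mathfrak{S}_{k',a}\otimes_W M_0\xrightarrow{\sim}\mathfrak{P}_a$ matching them to $1\otimes\bm{s}_{\alpha,0}$ — i.e.\ that condition (2) of Definition~\ref{defn:adapted_level_a} is open and that, on the locus where it holds, the deformations are controlled by $\Lie G$ rather than all of $\Lie\GL$. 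This is exactly the kind of argument carried out in \cite[(1.5.4)--(1.5.8)]{kisin:abelian} via Grothendieck-Messing theory; here I would instead run it directly with Breuil windows, using that $\mathfrak{S}_{k',a}$ is faithfully flat over $\Z_p$ and Remark~\ref{rem:key lemma other way} to pass between reductivity of $G$ and of $G_{\mathfrak{S}}$. For the purposes of this lemma, though, only the \emph{upper} bound is needed, so it suffices to exhibit $\mathcal{D}_G(\co_L/\pi_L^{a+1})$ as a subset of a $q^d$-torsor over $\mathcal{D}_G(\co_L/\pi_L^{a})$, which is somewhat less delicate than pinning down exactly which subset it is.
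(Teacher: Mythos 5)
Your overall skeleton agrees with the paper's: induct on $a$, show each nonempty fiber of $\mathcal{D}_G(\co_L/\pi_L^{a+1})\to\mathcal{D}_G(\co_L/\pi_L^{a})$ has at most $q^d$ elements, with $d=\dim G_k-\dim P_k$. But the entire content of the lemma is that fiber bound, and your argument for it rests on two assertions that are not established. First, the claimed ``standard deformation theory'' statement: the kernel of $\mathfrak{S}_{k',a}\twoheadrightarrow\mathfrak{S}_{k',a-1}$ is $u^{a}\mathfrak{S}_{k'}/u^{a+1}\mathfrak{S}_{k'}\cong W(k')$, a free $W(k')$-module of rank one, \emph{not} a one-dimensional $k'$-vector space; the $k'$-line you have in mind is the kernel of $\co_L/\pi_L^{a+1}\to\co_L/\pi_L^{a}$. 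So the statement you need is a Grothendieck--Messing/Lau-type result identifying lifts of a window along this frame surjection with lifts of the Hodge filtration over $\co_L/\pi_L^{a+1}$; nothing of this kind appears in Theorem~\ref{thm:kisin_lau_p_divisible} (which gives only the equivalences $\mathfrak{P}_a$ and their compatibility with reduction), so it would have to be imported from Lau's deformation theory or proved. Second, and more seriously, the claim that $G$-adaptedness cuts the resulting torsor down to one under $(\g/\Fil^0\g)\otimes k'$ requires comparing conditions (1)--(2) of Definition~\ref{defn:adapted_level_a}, which live on the window over $\mathfrak{S}_{k',a}$, with condition (3), which lives on $\mathfrak{P}_{\dR}$ over $\co_L/\pi_L^{a+1}$, for \emph{all} lifts in a fiber simultaneously. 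You explicitly flag this as ``the main obstacle'' and defer it to the Grothendieck--Messing argument of Kisin --- but replacing that argument (which is unavailable for $p=2$ and non-connected $\mathcal{G}_0$ in the form Kisin used it) is exactly what this lemma is for, so the decisive step is left unproven.

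The paper's proof avoids any deformation-theoretic torsor statement. Using condition (2) of $G$-adaptedness it trivializes the window of each lift as $\mathfrak{S}_{k',a}\otimes_W M_0$ matching the tensors, then uses conjugacy of cocharacter-split filtrations (Proposition 1.1.5 of Kisin, via Grothendieck's result) to put the filtration in the standard position $(\mathcal{E}_L(u)\mathfrak{S}_{k',a}\otimes_W M'_0)\oplus(\mathfrak{S}_{k',a}\otimes_W\Fil^1M_0)$. Every $G$-adapted lift in a fiber then arises from a Frobenius $\tilde\varphi_a\circ g$ with $g$ in a fixed coset of $\mathcal{K}_a=\ker\bigl(G(\mathfrak{S}_{k',a})\to G(\mathfrak{S}_{k',a-1})\bigr)$, two such Frobenii giving isomorphic windows exactly when they differ by the subgroup $\mathcal{I}_a\subset\mathcal{K}_a$ preserving the standard filtration; the bound is then the elementary computation $\#(\mathcal{K}_a/\mathcal{I}_a)=\#(\Lie G_k/\Lie P_k)=q^d$, with no appeal to crystalline or window deformation theory. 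Your route could plausibly be completed (classical Grothendieck--Messing does apply to the square-zero thickening $\co_L/\pi_L^{a+1}\to\co_L/\pi_L^{a}$ even at $p=2$, but you would still need a finite-level comparison of $\mathfrak{P}_{\dR}(\mathcal{G}_a)$ and its filtration with the Dieudonn\'e crystal, plus the argument that $G$-adapted filtrations are $G$-conjugate); as written, however, the key step is asserted rather than proved.
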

\begin{proof}
Fix any cocharacter $\mu:\Gm \to G$ lifting $\mu_0$, and let $\Fil^1M_0\subset M_0$ be the corresponding lift of the Hodge filtration. We have a direct sum decomposition
\[
M_0 = \Fil^1M_0 \oplus M'_0,
\]
where $M'_0\subset M_0$ is the subspace on which $\mu(\Gm)$ acts trivially.


Fix an $x:R^{\mathrm{univ}}\to \co_L/\pi_L^{a+1}$ in $\mathcal{D}_G(\co_L/\pi_L^{a+1})$. This corresponds to a $p$-divisible group $\mathcal{G}_x$ over $\co_L/\pi_L^{a+1}$ lifting $\mathcal{G}_0$, and is such that the tensors $\{\bm{s}_{\alpha,\mathfrak{S}_{a},\mathcal{G}_x}\}$ satisfy the conditions of~\eqref{defn:adapted_level_a}.

Therefore, we have an isomorphism
\[
\xi_{a}: \mathfrak{S}_{k',a}\otimes_WM_0\xrightarrow{\simeq}\mathfrak{P}_{a}(\mathcal{G}_x)
\]
carrying, for each $\alpha$, $1\otimes \bm{s}_{\alpha,0}$ to $\bm{s}_{\alpha,\mathfrak{S}_{a},\mathcal{G}_x}$. We will use this isomorphism to identify $\mathfrak{P}_{a}(\mathcal{G}_x)$ with $\mathfrak{S}_{k',a}\otimes_WM_0$.

Moreover, the Hodge filtration $\Fil^\bullet\mathfrak{P}_{\dR}(\mathcal{G}_x)$ on $\mathfrak{P}_{\dR}(\mathcal{G}_x)$ is split by a cocharacter of $G_{\co_L/\pi_L^{a+1}}$. Lift $\Fil^\bullet\mathfrak{P}_{\dR}(\mathcal{G}_x)$ to a filtration $\Fil^\bullet( \mathfrak{S}_{k',a}\otimes_WM_0 )$ split by a cocharacter of $G_{\mathfrak{S}_{k',a}}$. This is always possible by Proposition 1.1.5 of~\cite{kisin:abelian}. In addition, using Corollaire 3.3 of~\cite{Grothendieck1964-rn}, we find that, by replacing $\xi_{a}$ with $\xi_{a}\circ g$, for some $g\in G(\mathfrak{S}_{k',a})$ if necessary, we can assume that
\[
\Fil^\bullet ( \mathfrak{S}_{k',a}\otimes_WM_0 ) = \mathfrak{S}_{k',a}\otimes_W\Fil^\bullet M_0.
\]

We now have
\[
\xi_{a}^{-1}(\Fil^1 \mathfrak{P}_{a-1}(\mathcal{G}_x)) = (\mathcal{E}_L(u)\mathfrak{S}_{k',a}\otimes_WM'_0)\oplus (\mathfrak{S}_{k',a}\otimes_W\Fil^1M_0)\subset \mathfrak{S}_{k',a}\otimes_WM_0),
\]
and the isomorphism $\tilde{\varphi}_{a}:\varphi^*\Fil^1 \mathfrak{P}_{a}(\mathcal{G}_x)\xrightarrow{\simeq}\mathfrak{P}_{a}(\mathcal{G}_x)$ translates to an isomorphism
\[
\tilde{\varphi}_{a}:(\mathcal{E}_L(u)\mathfrak{S}_{k',a}\otimes_W\sigma^*M'_0)\oplus(\mathfrak{S}_{k',a}\otimes_W\sigma^*\Fil^1M_0)\xrightarrow{\simeq}\mathfrak{S}_{k',a}\otimes_WM_0,
\]
which lifts the isomorphism
\[
\tilde{\varphi}_0:p(\sigma^*M'_0)\oplus \sigma^*\Fil^1M_0\xrightarrow{\simeq}M_0
\]
obtained from the $F$-crystal structure on $M_0$ (see~\eqref{subsec:M_0props}) and is such that the induced isomorphism
\[
\varphi_a:(\mathfrak{S}_{k',a}\otimes_W\sigma^*M_0)[\mathcal{E}_L(u)^{-1}]\xrightarrow{\simeq}(\mathfrak{S}_{k',a}\otimes_WM_0)[\mathcal{E}_L(u)^{-1}]
\]
carries, for each $\alpha$, $1\otimes\sigma^*\bm{s}_{\alpha,0}$ to $1\otimes\bm{s}_{\alpha,0}$.

Let $\widetilde{\mathcal{D}}_G(\co_L/\pi_L^{a+1})$ be the set of such isomorphisms $\tilde{\varphi}_a$. Using Theorem~\ref{thm:kisin_lau_p_divisible}, we now find that we have constructed a surjection
\[
\widetilde{\mathcal{D}}_G(\co_L/\pi_L^{a+1})\to \mathcal{D}_G(\co_L/\pi_L^{a+1})
\]
carrying an isomorphism $\tilde{\varphi}_a$ to the $p$-divisible group over $\co_L/\pi_L^{a+1}$ corresponding to the Breuil window
\[
\Theta_a(\tilde{\varphi}_a)\coloneqq (\mathfrak{S}_{k',a}\otimes_WM_0,(\mathcal{E}_L(u)\mathfrak{S}_{k',a}\otimes_W M'_0)\oplus(\mathfrak{S}_{k',a}\otimes_W\Fil^1M_0),\tilde{\varphi}_a).
\]

Now, the action $\tilde{\varphi}_a\mapsto g\circ\tilde{\varphi}_a$, for $g\in G(\mathfrak{S}_{k',a})$ makes $\widetilde{\mathcal{D}}_G(\co_L/\pi_L^{a+1})$ a torsor under $G(\mathfrak{S}_{k',a})$. Fix $\tilde{\varphi}_a\in \mathcal{D}_G(\co_L/\pi_L^{a+1})$. Suppose now that we have $g_1,g_2\in \mathfrak{S}_{k',a}$ such that
\[
h \coloneqq g_2g_1^{-1}\in \mathcal{K}_a\coloneqq \ker\bigl(G(\mathfrak{S}_{k',a})\to G(\mathfrak{S}_{k',a-1})\bigr).
\]
Then $\varphi(h)\in G(\mathfrak{S}_{k',a})$. Therefore, it is easy to see that $h$ induces an isomorphism $\Theta_a(\tilde{\varphi}_a\circ g_1)\xrightarrow{\simeq}\Theta_a(\tilde{\varphi}_a\circ g_2)$ of Breuil windows if and only if it preserves the subspace
\[
(\mathcal{E}_L(u)\mathfrak{S}_{k',a}\otimes_W M'_0)\oplus(\mathfrak{S}_{k',a}\otimes_W\Fil^1M_0)\subset \mathfrak{S}_{k',a}\otimes_WM_0.
\]
Write $\mathcal{I}_a\subset \mathcal{K}_a$ for the subspace of elements that preserve this subspace.

Then we have shown that each non-empty fiber of the reduction map $\widetilde{\mathcal{D}}_G(\co_L/\pi_L^{a+1})\to \widetilde{\mathcal{D}}_G(\co_L/\pi_L^a)$ 
is in bijection with the set $\mathcal{K}_a/\mathcal{I}_a$. Therefore, to finish the proof of the lemma, it is enough to show that $\mathcal{K}_a/\mathcal{I}_a$ has $q^d$-elements. For this, simply observe that, if $P\subset G$ is the parabolic subgroup preserving the subspace $\Fil^1M_0\subset M_0$, then the reduction map $G(\mathfrak{S}_{k',a})\to G(\co_K/\pi_L^{a+1})$ identifies $\mathcal{K}_a/\mathcal{I}_a$ with
\begin{align*}
\frac{\ker(G(\co_L/\pi_L^{a+1})\to G(\co_L/\pi_L^a))}{\ker(P(\co_L/\pi_L^{a+1})\to P(\co_L/\pi_L^a))} \xrightarrow{\simeq}\frac{\pi^a\co_L}{\pi^{a+1}\co_L}\otimes_W\frac{\Lie G}{\Lie P} = \frac{\Lie G_k}{\Lie P_k}.
\end{align*}

\end{proof}

\begin{lemma}
\label{lem:one implication}
Suppose that $R_G$ is a quotient of $R^{\mathrm{univ}}$ that is formally smooth over $W$ of relative dimension $d$, and is such that, for all $x:R^{\mathrm{univ}}\to \co_L$ factoring through $R_G$, $\mathcal{G}_x$ is $G$-adapted. Then \underline{every} $x:R^{\mathrm{univ}}\to \co_L$ such that $\mathcal{G}_x$ is $G$-adapted factors through $R_G$.
\end{lemma}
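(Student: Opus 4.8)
The plan is to reduce the statement to a point count at each finite level $\co_L/\pi_L^{a+1}$, comparing the upper bound of Lemma~\ref{lem:point count} with the \emph{exact} count of $\co_L/\pi_L^{a+1}$-points of the formally smooth ring $R_G$, and then pass to the limit using completeness.

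First I would record two inputs. Since $R_G$ is a quotient of $R^{\mathrm{univ}}$, it is a complete local Noetherian $W$-algebra with residue field $k$, and being formally smooth over $W$ of relative dimension $d$ it is (noncanonically) a power series ring $W\pow{x_1,\dots,x_d}$; hence a local $W$-algebra homomorphism $R_G\to\co_L/\pi_L^{a+1}$ is the same as a choice of $d$ elements of the maximal ideal $\pi_L\co_L/\pi_L^{a+1}\co_L$, a set of cardinality $q^a$, so the set of $x\colon R^{\mathrm{univ}}\to\co_L/\pi_L^{a+1}$ that factor through $R_G$ has exactly $q^{ad}$ elements. Second, $G$-adaptedness propagates along reduction: if $\mathcal{G}$ over $\co_L$ is $G$-adapted and $\mathcal{G}_a$ denotes its reduction over $\co_L/\pi_L^{a+1}$, then by part~\ref{kisin_lau:reduction} of Theorem~\ref{thm:kisin_lau_p_divisible} the Breuil window $\mathfrak{P}_a(\mathcal{G}_a)$ is the reduction of $\mathfrak{P}(\mathcal{G})$ modulo $u^{a+1}$; by uniqueness of $\varphi$-invariant lifts the tensors $\{\bm{s}_{\alpha,\mathfrak{S}_a,\mathcal{G}_a}\}$ are the images of $\{\bm{s}_{\alpha,\mathfrak{S},\mathcal{G}}\}$, and reducing the trivialization and the splitting cocharacter of Definition~\ref{defn:adapted} exhibits $\mathcal{G}_a$ as $G$-adapted in the sense of Definition~\ref{defn:adapted_level_a}.

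With these in hand, I would establish the finite-level claim: for each $a$, the maps $R^{\mathrm{univ}}\to\co_L/\pi_L^{a+1}$ that factor through $R_G$ are exactly those in $\mathcal{D}_G(\co_L/\pi_L^{a+1})$. For the inclusion $\subseteq$, given such a map lift it to some $x\colon R_G\to\co_L$, by lifting the images of the $x_i$ from $\pi_L\co_L/\pi_L^{a+1}\co_L$ to $\pi_L\co_L$; then $R^{\mathrm{univ}}\to R_G\xrightarrow{x}\co_L$ factors through $R_G$, so $\mathcal{G}_x$ is $G$-adapted by hypothesis, hence so is its reduction over $\co_L/\pi_L^{a+1}$ by the second input above. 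This places all $q^{ad}$ maps factoring through $R_G$ inside $\mathcal{D}_G(\co_L/\pi_L^{a+1})$, which by Lemma~\ref{lem:point count} has at most $q^{ad}$ elements; comparing cardinalities forces the two finite sets to coincide. Finally, given any $x\colon R^{\mathrm{univ}}\to\co_L$ with $\mathcal{G}_x$ $G$-adapted, each reduction $x_a\colon R^{\mathrm{univ}}\to\co_L/\pi_L^{a+1}$ has $\mathcal{G}_{x_a}$ $G$-adapted, hence lies in $\mathcal{D}_G(\co_L/\pi_L^{a+1})$, hence factors through $R_G$; writing $J=\ker(R^{\mathrm{univ}}\twoheadrightarrow R_G)$ we get $x(J)\subseteq\pi_L^{a+1}\co_L$ for every $a$, whence $x(J)=0$ and $x$ factors through $R_G$, as desired.

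The one genuinely delicate point is the reduction-compatibility of $G$-adaptedness in the second step: one must check that the unique $\varphi$-invariant tensor lifts over $\mathfrak{S}_{k',a}$ really are the reductions modulo $u^{a+1}$ of those over $\mathfrak{S}_{k'}$ — this is where part~\ref{kisin_lau:reduction} of Theorem~\ref{thm:kisin_lau_p_divisible} and the uniqueness in the Dwork-trick construction of $\{\bm{s}_{\alpha,\mathfrak{S},\mathcal{G}}\}$ are used — and that the splitting cocharacter $\mu_{\mathcal{G}}$ together with the trivialization $\mathfrak{S}_{k'}\otimes_WM_0\xrightarrow{\simeq}\mathfrak{P}(\mathcal{G})$ reduce to data of the type required in Definition~\ref{defn:adapted_level_a}. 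Everything else is bookkeeping with cardinalities and the Krull intersection theorem.
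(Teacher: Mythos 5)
Your proof is correct and follows essentially the same route as the paper: lift $\co_L/\pi_L^{a+1}$-points to $\co_L$-points via formal smoothness, use the hypothesis plus the reduction-compatibility of $G$-adaptedness to get the inclusion into $\mathcal{D}_G(\co_L/\pi_L^{a+1})$, compare cardinalities against Lemma~\ref{lem:point count}, and then pass to the limit. The only difference is that you make explicit the reduction-compatibility step (uniqueness of $\varphi$-invariant lifts, reduction of the trivialization and cocharacter), which the paper leaves implicit behind the word ``Therefore.''
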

\begin{proof}
Since $R_G$ is formally smooth, every map $x_a:R_G\to \co_L/\pi_L^{a+1}$ lifts to a map $x:R_G\to\co_L$. Therefore, the hypothesis implies that, for every $a\in \Z_{\geq 0}$, we have:
\begin{equation}\label{eqn:inclusion RG DG}
\{x_a:\;R^{\mathrm{univ}}\to \co_L/\pi_L^{a+1}:\;x\text{ factors through $R_G$}\}\subset \mathcal{D}_G(\co_L/\pi_L^{a+1}).
\end{equation}
The set on the left hand side has size $q^{ad}$, since $R_G$ is formally smooth of relative dimension $d$ over $W$. The set on the right has at most $q^{ad}$ elements by Lemma~\ref{lem:point count}. Therefore, the inclusion~\eqref{eqn:inclusion RG DG} has to be an equality for every $a$. This easily implies the corollary.
\end{proof}

\subsection{}
To finish the proof of Proposition~\ref{prop:adapted}, it is now enough to construct a formally smooth quotient $R_G$ of $R^{\mathrm{univ}}$ as in Lemma~\ref{lem:one implication}. For this, we follow a construction of Faltings~\cite{faltings:very_ramified}. Fix any cocharacter $\mu:\Gm \to G$ lifting $\mu_0$, and let $\Fil^1M_0\subset M_0$ be the corresponding lift of the Hodge filtration. We have a direct sum decomposition
\[
M_0 = \Fil^1M_0 \oplus M'_0,
\]
where $M'_0\subset M_0$ is the subspace on which $\mu(\Gm)$ acts trivially.

Let $P\subset G$ be the parabolic subgroup stabilizing $\Fil^1M_0$. Let $U^{\op}_G\subset G$ and $U^{\op}\subset\GL(M_0)$ be the \emph{opposite} unipotents determined by $\mu$, so that 
\[
\Lie U^{\mathrm{op}} \subset \End(M_0) , \quad \Lie U^{\mathrm{op}}_G\subset \Lie G_W
\]
are the $-1$-eigenspaces for $\mu$.

We have $\dim U^{\op}_G = d  = \dim G - \dim P$.

\subsection{}
Let $R$ (resp. $R_G$) be the complete local ring of $U^{\mathrm{op}}$ (resp. $U^{\mathrm{op}}_G$) at the identity.

Choose a basis for $\Lie U^{\mathrm{op}}_G$ and extend it to a basis for $\Lie U^{\mathrm{op}}$. This gives us compatible co-ordinates
\[
R_G\xrightarrow{\simeq}W\pow{u_1,\ldots,u_d} , \quad  R\xrightarrow{\simeq} W\pow{u_1,\ldots, u_d,u_{d+1},\ldots, u_n}.
\]
Let $\varphi:R\to R$ be the Frobenius lift satisfying $\varphi(u_i) = u_i^p$; it preserves $R_G$.

Now set $M^\diamond = R\otimes_WM_0$ (resp. $M^\diamond_{R_G} = R_G\otimes_WM_0$). Equip $M^\diamond$ and $M^\diamond_{R_G}$ with the constant Hodge filtrations
\[
\Fil^\bullet M^\diamond = R\otimes_W\Fil^\bullet M_0 , \quad  \Fil^\bullet M^\diamond_{R_G} = R_G\otimes_W\Fil^\bullet M_0.
\]
We will also equip $M^\diamond$ with the map
\[
\varphi_{M^\diamond} = g(1\otimes \varphi_0):\varphi^*M^\diamond \to M^\diamond,
\]
Here, $g\in U^{\mathrm{op}}(R)$ is the tautological element, and $\varphi_0:\varphi^*M_0\to M_0$ is the map obtained from the $F$-crystal structure on $M_0$. Note that $\varphi_{M^{\diamond}}$ induces a map
\[
\varphi_{M^\diamond_{R_G}}: \varphi^*M^\diamond_{R_G}\to M^\diamond_{R_G}.
\]

\begin{proposition}
\label{prop:dieudonne faltings construction}
There exists a choice of the lift $\mu$, for which there is a $p$-divisible group $\mathcal{G}^\diamond$ over $R$ deforming $\mathcal{G}_0$ and equipped with an isomorphism:
\[
\mathbb{D}(\mathcal{G}^\diamond)(R) \xrightarrow{\simeq} M^\diamond
\]
of filtered $R$-modules equipped with $\varphi$-semilinear endomorphisms. The associated map $R^{\mathrm{univ}}\to R$ of local $W$-algebras obtained from the universal property of $R^{\mathrm{univ}}$ is an isomorphism.
\end{proposition}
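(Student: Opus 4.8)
The plan is to produce the $p$-divisible group $\mathcal{G}^\diamond$ over $R$ by descending from a Breuil window and invoking Theorem~\ref{thm:kisin_lau_p_divisible}, then to match it against the universal deformation. I would proceed roughly as follows.

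\emph{Step 1: Choose the lift $\mu$ carefully, and set up the Breuil window.} One cannot take an arbitrary lift of $\mu_0$: the point is to choose $\mu$ (hence the splitting $M_0 = \Fil^1 M_0 \oplus M_0'$ and the opposite unipotent $U^{\op}_G$) so that the tautological map $\varphi_{M^\diamond}$ actually assembles into a Breuil-Kisin module of $\mathcal{E}$-height $1$ over the power series ring; concretely, one wants $\varphi_{M^\diamond}$ to carry the ``Verschiebung'' submodule $\Fil^1 M^\diamond$ onto $M^\diamond$ and to restrict on $\Fil^1$ to an isomorphism after dividing by $\mathcal{E}(u) = u$ (the Eisenstein polynomial here is just $p = \mathcal{E}(0)$, but over $W\pow{u_1,\dots,u_n}$ we are in the Breuil window of level $0$ situation, so the relevant frame is $\mathscr{B}_0$ with $\co_K = W$). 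The key structural observation, already recorded in~\ref{subsec:M_0props}, is that $\Fil^1 M_0$ is the preimage of $\Fil^1(M_0\otimes k)$, so the triple $(M^\diamond, (\sigma^{-1})^*$-image of the Verschiebung lattice$, \tilde\varphi)$ built from $\varphi_{M^\diamond}$ is a Breuil window over $R$ once $\varphi_{M^\diamond}$ is an isomorphism away from $p$ and has the right cokernel — and this is exactly the content of ``there exists a choice of $\mu$'': for a suitable conjugate one can arrange $g\in U^{\op}(R)$ to interact correctly with $\varphi_0$. This is essentially Faltings' observation and the cited construction in~\cite{faltings:very_ramified}, and the main obstacle is precisely the bookkeeping here: one must check that the $F$-crystal structure $\varphi_0$ together with the tautological $g$ produces a \emph{Dieudonné display} (equivalently a Breuil window of level $0$) over $R$, not merely a $\varphi$-module, which requires the correct positioning of the Hodge filtration relative to the kernel of $\varphi_{M^\diamond}\bmod p$.

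\emph{Step 2: Apply Lau's equivalence.} Once $(M^\diamond, \Fil^1 M^\diamond, \tilde\varphi_{M^\diamond})$ is a Breuil window of level $0$ over the base frame attached to $R$ — which is a $p$-adically complete, formally smooth $W$-algebra, so Lau's theory applies — Theorem~\ref{thm:kisin_lau_p_divisible} (or rather its local-ring generalization via the equivalence of~\cite[\S2]{lau:displays}) yields a $p$-divisible group $\mathcal{G}^\diamond$ over $R$ with $\mathbb{D}(\mathcal{G}^\diamond)(R)\xrightarrow{\simeq} M^\diamond$ compatibly with filtrations and Frobenius; reducing mod the maximal ideal recovers $\mathcal{G}_0$ because $\varphi_{M^\diamond}$ reduces to $\varphi_0 = \varphi_{M_0}$. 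This gives the isomorphism of filtered $R$-modules with $\varphi$-semilinear endomorphism asserted in the proposition.

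\emph{Step 3: Identify $R$ with $R^{\mathrm{univ}}$.} The deformation $\mathcal{G}^\diamond$ over $R = W\pow{u_1,\dots,u_n}$ induces a map $R^{\mathrm{univ}}\to R$. To see it is an isomorphism, it suffices to check that it induces an isomorphism on tangent spaces (both rings are formally smooth over $W$ of the same dimension $n = \dim \GL(M_0) - \dim P$, which is the dimension of the deformation space of $\mathcal{G}_0$ by Grothendieck–Messing / Illusie). The tangent space computation amounts to: the first-order deformations parametrized by $U^{\op}$ at the identity, via $g = 1 + \sum t_i N_i + \cdots$ with $N_i$ spanning $\Lie U^{\op}$, are exactly the first-order deformations of the Hodge filtration inside $M_0\otimes k$, which by Grothendieck–Messing is the full tangent space $\Hom(\Fil^1(M_0\otimes k), (M_0\otimes k)/\Fil^1)$ of $R^{\mathrm{univ}}$. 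Hence the map is an isomorphism on cotangent spaces, and since both sides are power series rings over $W$, it is an isomorphism. I expect Step~1 to be the crux; Steps~2 and~3 are formal given the machinery already assembled, though Step~3 still requires correctly counting dimensions so that $\dim U^{\op} = n$ matches the dimension of $R^{\mathrm{univ}}$.
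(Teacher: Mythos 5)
Your overall route differs from the paper's: the paper does \emph{not} build $\mathcal{G}^\diamond$ by putting a window structure on $M^\diamond$ and invoking a classification theorem over $R$; it first chooses $\mu$ to be the Hodge cocharacter of an actual $G$-adapted lift $\mathcal{G}_W$ of $\mathcal{G}_0$ over $W$ (existence quoted from Kisin's Langlands--Rapoport paper), then applies Faltings's Theorem 10 -- a linear-algebra statement about the crystal of the universal deformation -- to produce a map $R^{\mathrm{univ}}\to R$ along which the universal crystal pulls back to $M^\diamond$, defines $\mathcal{G}^\diamond$ as the pullback of $\mathcal{G}^{\mathrm{univ}}$, and proves the map is an isomorphism by a versality argument. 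Measured against this, your proposal has two genuine gaps. First, Step 2 invokes a classification that is not available in the form you use: Theorem~\ref{thm:kisin_lau_p_divisible} only covers $\co_K$ and its artinian quotients, and over $R = W\pow{u_1,\ldots,u_n}$ a triple $(M^\diamond,\Fil^1M^\diamond,\tilde\varphi)$ over $R$ is not the input of Lau's equivalence. Windows over a frame of the shape $(R,pR,R/pR,\varphi,\varphi_1)$ would classify $p$-divisible groups over $R/pR$, not over $R$; what classifies $p$-divisible groups over $R$ in Lau's theory is Dieudonn\'e displays over the Zink ring $\mathbb{W}(R)$, so you would still need a frame morphism (via the Frobenius lift) plus a crystalline comparison to deduce $\mathbb{D}(\mathcal{G}^\diamond)(R)\simeq M^\diamond$ -- and that bridge is precisely the nontrivial content that the paper extracts from Faltings's theorem rather than reproving.

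Second, Step 1 misidentifies what the choice of $\mu$ is for. The ``window axioms'' you worry about hold for \emph{every} lift of $\mu_0$: by~\eqref{subsec:M_0props}, $\sigma^*\Fil^1(M_0\otimes k)=\ker(\varphi_{M_0}\otimes 1)$, so any lift $\Fil^1M_0$ of the Hodge filtration satisfies $\varphi_0(\sigma^*\Fil^1M_0)\subset pM_0$ with the correct normalization, and twisting by $g\in U^{\op}(R)$ does not disturb this. What actually has to be arranged is that $\Fil^1M_0$ is the Hodge filtration of a genuine ($G$-adapted) lift $\mathcal{G}_W$ over $W$; at $p=2$ this does not follow from merely choosing a filtration, since $pW\subset W$ does not carry nilpotent divided powers and Grothendieck--Messing does not apply to $W\to k$, which is why the paper imports the existence of such a lift. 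This anchoring is also needed so that the identity section of $U^{\op}$ corresponds to a known deformation (used again in Proposition~\ref{prop:RG adapted}). Finally, in Step 3 the assertion that $\Lie U^{\op}$ maps isomorphically onto first-order deformations of the Hodge filtration is exactly the versality statement that has to be proved (the equal-dimension count does not give injectivity or surjectivity by itself); the paper disposes of it by citing the versality computation in~\cite{mp:thesis}, and your sketch would need an analogous explicit computation relating the twist by $g=1+\epsilon N$ to the induced lift of the Hodge filtration over $k[\epsilon]$.
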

\begin{proof}
First, choose any $G$-adapted lift $\mathcal{G}_W$ of $\mathcal{G}_0$ over $W$. Such a lift always exists, as shown in the proof of~\cite[Proposition 1.1.13]{kisin:lr}.

We then have a canonical isomorphism
\[
\mathbb{D}(\mathcal{G}_W)(W)\xrightarrow{\simeq}M_0,
\]
and the image of the Hodge filtration $\Fil^1\mathbb{D}(\mathcal{G}_W)(W)$ in $M_0$ will give a filtration $\Fil^1M_0$ that will be split by a cocharacter $\mu:\Gm\to G$ lifting $\mu_0$.

The deformation $\mathcal{G}_W$ corresponds to a map $R^{\mathrm{univ}}\to W$. Let $\mathcal{G}^{\mathrm{univ}}$ be the universal deformation of $\mathcal{G}_0$ over $R^{\mathrm{univ}}$. Then there is an isomorphism
\begin{equation}\label{eqn:dieudonne reduction}
W\otimes_{R^{\mathrm{univ}}}\mathbb{D}(\mathcal{G}^{\mathrm{univ}})(R^{\mathrm{univ}})\xrightarrow{\simeq}M_0
\end{equation}
of filtered $F$-crystals over $W$.

Now, Theorem 10 of~\cite{faltings:very_ramified}, which is a purely linear algebraic result, implies that there is a unique map $f: R^{\mathrm{univ}}\to R$ such that if $e:R\to W$ corresponds to the identity section of $U^{\mathrm{op}}$, then the composition 
\[
R^{\mathrm{univ}}\to R\to W
\]
corresponds to lift $\mathcal{G}_W$ of $\mathcal{G}_0$, and such that there exists an isomorphism
\[
R\otimes_{R^{\mathrm{univ}}}\mathbb{D}(\mathcal{G}^{\mathrm{univ}})(R^{\mathrm{univ}})\xrightarrow{\simeq}M^\diamond
\]
of filtered $R$-modules lifting~\eqref{eqn:dieudonne reduction} and compatible with the $F$-crystal structure in a sense that is made precise in \emph{loc.~cit.}

That this map is an isomorphism follows from a versality argument; see~\cite[\S1.4.2]{mp:thesis} for details.
\end{proof}

By the above proposition, we can identify $R_G$ with a quotient of $R^{\mathrm{univ}}$. The proof of Proposition~\ref{prop:adapted} is now completed by
\begin{proposition}
\label{prop:RG adapted}
For every $x:R\to \co_L$ factoring through $R_G$, $\mathcal{G}_x$ is $G$-adapted.
\end{proposition}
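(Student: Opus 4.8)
The plan is to verify the three conditions of Definition~\ref{defn:adapted} for the $p$-divisible group $\mathcal{G}_x$ obtained from a map $x:R\to\co_L$ factoring through $R_G$. The key point is that everything is already set up over $R$ in a way that becomes visible after base change to $\co_L$ and passage through the classification of Theorem~\ref{thm:kisin_lau_p_divisible}. First I would unwind the situation: by Proposition~\ref{prop:dieudonne faltings construction} we have $\mathbb{D}(\mathcal{G}^\diamond)(R)\xrightarrow{\simeq}M^\diamond = R\otimes_WM_0$ as filtered modules with $\varphi$-semilinear endomorphism, where $\varphi_{M^\diamond} = g(1\otimes\varphi_0)$ and $g\in U^{\op}(R)$ is the tautological element. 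Since $x$ factors through $R_G$, the element $g$ lands in $U^{\op}_G(R_G)\subset G(R_G)$, so after base change along $x$ we get $\mathbb{D}(\mathcal{G}_x)(\co_L)\xrightarrow{\simeq}\co_L\otimes_WM_0$ with Frobenius $x^*(g)\cdot(1\otimes\varphi_0)$ where $x^*(g)\in G(\co_L)$.

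Next I would pass from this crystalline/de Rham description to the Breuil-Kisin side. Form the Breuil-Kisin module $\mathfrak{M}(\mathcal{G}_x)$ over $\co_L$ and $\mathfrak{P}(\mathcal{G}_x) = \varphi^*\mathfrak{M}(\mathcal{G}_x)$; by Theorem~\ref{thm:kisin_lau_p_divisible}\ref{kisin:derham} and the discussion of~\S\ref{subsec:M_0props} we have canonical identifications $\mathfrak{P}(\mathcal{G}_x)/u\mathfrak{P}(\mathcal{G}_x)\xrightarrow{\simeq}M_0$ and $\mathfrak{P}(\mathcal{G}_x)/\mathcal{E}_L(u)\mathfrak{P}(\mathcal{G}_x)\xrightarrow{\simeq}\mathbb{D}(\mathcal{G}_x)(\co_L)$. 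The explicit diagonal/triangular shape of the Breuil window $\Theta_a$ from the proof of Lemma~\ref{lem:point count} is precisely what the Faltings construction produces: the window attached to $\mathcal{G}_x$ is, after choosing the obvious basis, $(\mathfrak{S}_{k'}\otimes_WM_0,\ (\mathcal{E}_L(u)\mathfrak{S}_{k'}\otimes_WM'_0)\oplus(\mathfrak{S}_{k'}\otimes_W\Fil^1M_0),\ \tilde\varphi)$ with $\tilde\varphi$ built from $x^*(g)\cdot\varphi_0$. In particular the tautological $\mathfrak{S}_{k'}$-module identification $\mathfrak{S}_{k'}\otimes_WM_0\xrightarrow{\simeq}\mathfrak{P}(\mathcal{G}_x)$ is an isomorphism of $\mathfrak{S}_{k'}$-modules, and I must check it carries $1\otimes\bm{s}_{\alpha,0}$ to $\bm{s}_{\alpha,\mathfrak{S},\mathcal{G}_x}$.

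That last compatibility is the main obstacle, and it is where I would spend the real effort. The tensors $\{1\otimes\bm{s}_{\alpha,0}\}\subset\mathfrak{S}_{k'}\otimes_WM_0^\otimes$ (transported via the tautological identification) are $\varphi$-invariant: their invariance under $\tilde\varphi$ follows because $\varphi_{M^\diamond} = g\cdot(1\otimes\varphi_0)$ with $g\in G$, and $G$ is by definition the stabilizer of $\{\bm{s}_{\alpha,0}\}$, so $g$ preserves the tensors while $1\otimes\varphi_0$ does too (the $\bm{s}_{\alpha,0}$ being $\varphi_0$-invariant by hypothesis on $M_0$). By the uniqueness of $\varphi$-invariant lifts of $\{\bm{s}_{\alpha,0}\}$ — the same Dwork's-trick uniqueness invoked just before Definition~\ref{defn:adapted} — these must coincide with $\{\bm{s}_{\alpha,\mathfrak{S},\mathcal{G}_x}\}$. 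Once this is in place, condition~(1) of Definition~\ref{defn:adapted} holds (the tensors lie in $\mathfrak{P}(\mathcal{G}_x)^\otimes$, not merely in $L_0\pow{u}\otimes\mathfrak{P}(\mathcal{G}_x)^\otimes$), condition~(2) holds because the tautological isomorphism carries $1\otimes\bm{s}_{\alpha,0}$ to $\bm{s}_{\alpha,\mathfrak{S},\mathcal{G}_x}$ and hence identifies the stabilizer of the latter in $\GL(\mathfrak{P}(\mathcal{G}_x))$ with $G_{\mathfrak{S}_{k'}}$, and condition~(3) holds because the Hodge filtration on $\mathbb{D}(\mathcal{G}_x)(\co_L) = \co_L\otimes_W M_0$ is the constant filtration $\co_L\otimes_W\Fil^1M_0$, which is split by the base change of $\mu$, a cocharacter into $G_{\co_L}$ lifting $\mu_0$. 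I would also remark that reducing everything modulo $u^{a+1}$ gives, by the same argument combined with Theorem~\ref{thm:kisin_lau_p_divisible}\ref{kisin_lau:reduction}, the analogous statement with Definition~\ref{defn:adapted_level_a}, which is what feeds back into Lemma~\ref{lem:one implication}.
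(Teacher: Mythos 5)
There is a genuine gap at the central step of your argument: the assertion that ``the window attached to $\mathcal{G}_x$ is, after choosing the obvious basis, $(\mathfrak{S}_{k'}\otimes_WM_0,\,(\mathcal{E}_L(u)\mathfrak{S}_{k'}\otimes_WM'_0)\oplus(\mathfrak{S}_{k'}\otimes_W\Fil^1M_0),\,\tilde\varphi)$ with $\tilde\varphi$ built from $x^*(g)\cdot\varphi_0$.'' This is exactly the conclusion you are supposed to prove (it is essentially conditions (1) and (2) of Definition~\ref{defn:adapted}), and nothing in what precedes it justifies it. The Faltings construction of Proposition~\ref{prop:dieudonne faltings construction} lives entirely on the crystalline side: it gives you $\mathbb{D}(\mathcal{G}_x)$ evaluated on $\co_L$, on $W$, or (after lifting $x$ to $S$) on $S$, with a tensor-compatible trivialization there. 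But the Breuil--Kisin module $\mathfrak{M}(\mathcal{G}_x)$ is only related to this data through $S\otimes_{\varphi,\mathfrak{S}_{k'}}\mathfrak{M}(\mathcal{G}_x)\simeq\mathbb{D}(\mathcal{G}_x)(S)$ (Theorem~\ref{thm:kisin_lau_p_divisible}\ref{kisin:breuil}), and there is no direct way to descend an integral $G$-trivialization, or even the integrality of the tensors, from $S$ back to $\mathfrak{S}_{k'}$: since $\mathcal{E}_L(u)^i/i!\in S$, the intersection of $S$ with $L_0\pow{u}$ is strictly larger than $\mathfrak{S}_{k'}$, so knowing that the Dwork-trick tensors $\bm{s}_{\alpha,\mathfrak{S},\mathcal{G}_x}$ become integral after base change to $S$ does not put them in $\mathfrak{P}(\mathcal{G}_x)^\otimes$. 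Note also that the explicit triangular window shape you import from the proof of Lemma~\ref{lem:point count} was derived there under the hypothesis that $\mathcal{G}_x$ is $G$-adapted, so invoking it here is circular. Your verification of condition (3) and the $\varphi$-invariance of the constant tensors on the crystalline side is fine, but those were never the hard points.

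The paper closes precisely this gap by a different route, following \cite{kisin:lr}: one produces global $\Gamma$-invariant \'etale tensors $\bm{s}_{\alpha,p,R_G}$ on $T_p(\mathcal{G}^\diamond)^\otimes[p^{-1}]$ over $\Spec R_G[p^{-1}]$ (Lemma 1.1.17 of \emph{loc.~cit.}), checks their integrality and identifies the reductive $\Z_p$-stabilizer at the single point $x_0$ corresponding to the $G$-adapted lift $\mathcal{G}_W$ chosen in Proposition~\ref{prop:dieudonne faltings construction} (using Remark~\ref{rem:key lemma other way}), propagates these facts over the connected base to every $x$, and only then obtains the integral tensor-compatible trivialization of $\mathfrak{P}(\mathcal{G}_x)$ from Kisin's Key Lemma (Theorem~\ref{thm:kisin_key_lemma}) combined with Theorem~\ref{thm:kisin_lau_p_divisible}. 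Some such input on the \'etale side (or an argument replacing the Key Lemma) is needed; as written, your proof assumes the key identification rather than establishing it.
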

\begin{proof}
This is essentially shown in Proposition 1.1.13 of~\cite{kisin:lr}. We recall the reasoning here.

By construction, for each index $\alpha$, the constant tensor 
\[
\bm{s}_{\alpha,R_G} = 1\otimes\bm{s}_{\alpha,0}\in (M^\diamond_{R_G})^\otimes = \mathbb{D}(\mathcal{G}^\diamond)(R_G)^\otimes
\]
is $\varphi$-invariant, and lies in $\Fil^0(M^\diamond_{R_G})^\otimes$. Moreover, the Hodge filtration 
\[
\Fil^1\mathbb{D}(\mathcal{G}^\diamond)(R_G)\subset \mathbb{D}(\mathcal{G}^\diamond)(R_G)
\]
is split by the cocharacter $1\otimes\mu:\Gm\to R_G\otimes_WG = G_{R_G}$.

For every point $x:R_G\to \co_L$, the specialization of $\bm{s}_{\alpha,R_G}$ gives us a tensor $\bm{s}_{\alpha,\dR,x}\in \mathbb{D}(\mathcal{G}_x)(\co_L)^\otimes$. The stabilizer of $\{\bm{s}_{\alpha,\dR,x}\}$ is isomorphic to $G_{\co_L}$, and the Hodge filtration on $\mathbb{D}(\mathcal{G}_x)(\co_L)$ is split by a cocharacter $\mu_x:\Gm\to G_{\co_L}$ lifting $\mu_0$.

Let $T_p(\mathcal{G}^{\diamond})$ be the lisse $p$-adic sheaf over $\Spec R_G[p^{-1}]$ obtained from the $p$-adic Tate module of $\mathcal{G}^{\diamond}$. By Lemma 1.1.17 of~\cite{kisin:lr}, we can find a global section
\[
\bm{s}_{\alpha,p,R_G}\in H^0\bigl(\Spec R_G[p^{-1}],T_p(\mathcal{G}^{\diamond})^\otimes[p^{-1}]\bigr)
\]
whose specialization at any point $x:R_G\to \co_L$ gives a $\Gamma_L$-invariant tensor $\bm{s}_{\alpha,p,x}\in T_p(\mathcal{G}_x)^\otimes[p^{-1}]$, characterized by the property that its de Rham realization is $\bm{s}_{\alpha,\dR,x}$, and its crystalline realization is $\bm{s}_{\alpha,0}$. 

We claim that $\bm{s}_{\alpha,p,R_G}$ is a section of $T_p(\mathcal{G}^{\diamond})^\otimes$. To see this, we can check at any point of $\Spec R_G[p^{-1}]$, which we choose to be the identity $x_0\in \widehat{U}_G(W)$. Then, by the construction in Proposition~\ref{prop:dieudonne faltings construction}, $\mathcal{G}_{x_0}$ is $G$-adapted. Therefore, for each $\alpha$, we have a $\varphi$-invariant tensor $\bm{s}_{\alpha,\mathfrak{S},x_0}\in \mathfrak{P}(\mathcal{G}_{x_0})^\otimes$, which, via the functor of~\eqref{subsec:kisin_functor}, corresponds to the tensor $\bm{s}_{\alpha,p,x_0}$. In particular, this implies that $\bm{s}_{\alpha,p,x_0}$ belongs to $T_p(\mathcal{G}_{x_0})^\otimes$, as desired. Moreover, by Remark~\ref{rem:key lemma other way} and Theorem~\ref{thm:kisin_lau_p_divisible}, there exists an isomorphism
\[
\mathfrak{S}\otimes_{\Z_p}T_p(\mathcal{G}_{x_0})^\vee \xrightarrow{\simeq}\mathfrak{P}(\mathcal{G}_{x_0})
\]
carrying $1\otimes \bm{s}_{\alpha,p,x_0}$ to $\bm{s}_{\alpha,\mathfrak{S},x_0}$, for all $\alpha$. 

Let $G_{\Z_p}\subset \GL(T_p(\mathcal{G}_{x_0}))$ be the stabilizer of $\{\bm{s}_{\alpha,p,x_0}\}$: This is a reductive group over $\Z_p$, isomorphic over $W$ to $G$. It now follows that, for \emph{every} $x:R_G\to \co_L$, the $\Gamma_L$-invariant tensor $\bm{s}_{\alpha,p,x}$ lies in $T_p(\mathcal{G}_x)^\otimes$. Moreover, the stabilizer of the collection $\{\bm{s}_{\alpha,p,x}\}$ in $\GL(T_p(\mathcal{G}_x))$ is isomorphic to $G_{\Z_p}$. Therefore, it now follows from Theorems~\ref{thm:kisin_key_lemma} and~\ref{thm:kisin_lau_p_divisible} that $\mathcal{G}_x$ is adapted to $G$.
\end{proof}

\section{Integral canonical models}

\subsection{}
Let $(G,X)$ be a Shimura datum, and let $H$ be a faithful representation of $G$ over $\Q$, equipped with a symplectic pairing $H\times H\to\Q$, affording an embedding of reductive $\Q$-groups
\[
G\hookrightarrow \GSp(H,\psi).
\]
into the groups of symplectic similitudes for $(H,\psi)$. We assume that, for each $x\in X$, the associated homomorphism $h_x:\mathbb{S}\to G_\R$ induces a Hodge structure on $H_\C$ of weights $(-1,0),(0,-1)$, which is polarized by $\psi$.

Fix a $\Z$-lattice $H_\Z\subset H$ and let $K\subset G(\A_f)$ be a neat compact open such that $K$ stabilizes $H_{\widehat{\Z}} = H_\Z\otimes\widehat{\Z}\subset H_{\A_f}$. Associated with this is the Shimura variety $\Sh_K = \Sh_K(G,X)$ over the reflex field $E = E(G,X)$. 

Let 
\[
H^\vee_\Z = \{h\in H_\Q:\;\psi(h,H_\Z)\subset \Z\}
\]
be the dual lattice for $H_\Z$ with respect to $\psi$, and let $m\in\Z_{\geq 1}$ be such that $m^2=[H^\vee_\Z:H_\Z]$ is the discriminant of $H_\Z$.

\subsection{}
A classical construction (see for instance~\cite[(2.1.8)]{mp:toroidal}) associates with $(H,\psi)$ and the lattice $H_\Z\subset H$, a polarized variation of (pure) $\Z$-Hodge structures over $\Sh_K(\C)$ of weights $(0,-1),(-1,0)$, and thus a family of polarized abelian varieties $A_{\Sh_K(\C)}\to \Sh_K(\C)$. The theory of canonical models for Shimura varieties now implies that this family arises from a canonical polarized abelian scheme $A\to \Sh_K$, which corresponds to a finite and unramified map
\begin{equation}\label{eqn:siegel_embedding}
\Sh_K \to \mathcal{X}_{d,m,\Q},
\end{equation}
where, $\mathcal{X}_{d,m}$ is the moduli stack over $\Z$ of polarized abelian schemes of dimension $d = \frac{1}{2}\dim H$ and degree $m^2$.

\subsection{}\label{subsec:derham realization}
More generally, the classical construction referenced above associates with a pair $(N,N_{\widehat{\Z}})$ consisting of an algebraic $\Q$-representation $N$ of $G$ and a $K$-stable lattice $N_{\widehat{\Z}}\subset N_{\A_f}$ a variation of pure $\Z$-Hodge structures
\[
(\bm{N}_B,\bm{N}_{\dR,\Sh_K(\C)},\Fil^\bullet\bm{N}_{\dR,\Sh_K(\C)}),
\]
where $\bm{N}_B$ is the underlying $\Z$-local system over $\Sh_K(\C)$, $\bm{N}_{\dR,\Sh_K(\C)} = \co_{\Sh_K(\C)}\otimes\bm{N}_B$ is the associated vector bundle with integrable connection, and $\Fil^\bullet\bm{N}_{\dR,\Sh_K(\C)}$ is the filtration on it by sub vector bundles inducing a pure Hodge structure at every point of $\Sh_K(\C)$. 

The theory of canonical models implies that, for every prime $\ell$, the associated $\ell$-adic local system $\Z_\ell\otimes\bm{N}_B$ descends to a lisse $\ell$-adic sheaf $\bm{N}_{\ell}$ over $\Sh_K$. When $(N,N_{\widehat{\Z}}) = (H,H_{\widehat{\Z}})$, this is simply the $\ell$-adic Tate module associated with $A$. 

More interestingly, Deligne's theory of absolute Hodge structures (see~\cite[\S 2.2]{kisin:abelian}) shows that the filtered vector bundle with integrable connection $(\bm{N}_{\dR,\Sh_K(\C)},\Fil^\bullet\bm{N}_{\dR,\Sh_K(\C)})$ has a canonical descent
\[
(\bm{N}_{\dR,\Sh_K},\Fil^\bullet\bm{N}_{\dR,\Sh_K})
\]
to a filtered vector bundle over $\Sh_K$ with integrable connection.

\subsection{}
Fix a prime $p$, and a place $v\vert p$ of $E$. Given an algebraic stack $\mathcal{X}$ over $\co_{E,(v)}$, and a normal algebraic stack $Y$ over $E$ equipped with a finite map $j_{E}:Y\to\mathcal{X}_{E}$, the \emph{normalization} of $\mathcal{X}$ in $Y$ is the finite $\mathcal{X}$-stack $j:\mathcal{Y}\to\mathcal{X}$, characterized by the property that $j_*\co_{\mathcal{Y}}$ is the integral closure of $\co_{\mathcal{X}}$ in $(j_{E})_*\co_Y$. It is also characterized by the following universal property: given a finite morphism $\mathcal{Z}\to\mathcal{X}$ with $\mathcal{Z}$ a normal algebraic stack, flat over $\co_{E,(v)}$, any map of $\mathcal{X}_{E}$-stacks $\mathcal{Z}_{E}\to Y$ extends uniquely to a map of $\mathcal{X}$-stacks $\mathcal{Z}\to\mathcal{Y}$.

\subsection{}\label{subsec:SK const}
We now obtain an integral model $\mathcal{S}_K$ for $\Sh_K$ over $\co_{E,(v)}$ by taking the normalization of $\co_{E,(v)}\otimes_{\Z}\mathcal{X}_{d,m}$ in $\Sh_K$. By construction $A$ extends to a polarized abelian scheme over $\mathcal{S}_K$, which we denote once again by $A$. 

Fix a finite extension $k$ of $k(v)$, and set $W=W(k)$, $L_0 = \mathrm{Frac}(W)$. Fix an algebraic closure $L_0^\alg$ of $K_0$. Suppose that we have a point $t\in \mathcal{S}_K(k)$. Let $\mathcal{G}_t = A_t[p^\infty]$ be the $p$-divisible group over $k$ associated with the fiber at $t$ of $A$. Let $\co_t$ be the complete local ring of $\mathcal{S}_K$, and let $R_t$ be the universal deformation ring of $\mathcal{G}_t$. Then $R_t$ is non-canonically isomorphic to a power series ring over $W$ in $g^2$-variables. By Serre-Tate deformation theory, $\co_t$ is the normalization of a quotient of $R_t$. 

\begin{proposition}
\label{prop:local model}
Suppose that $G$ admits a reductive model $G_{(p)}$ over $\Z_{(p)}$ such that $K_p = G_{(p)}(\Z_p)$. Then $\co_t$ is formally smooth over $W$.
\end{proposition}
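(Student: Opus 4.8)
The plan is to show that $\co_t$ is isomorphic to the formally smooth quotient of the universal deformation ring furnished by Proposition~\ref{prop:adapted}, with $\mathcal{G}_t$ playing the role of $\mathcal{G}_0$ and $R_t$ that of $R^{\mathrm{univ}}$ from \S\ref{sec:deformation}. First I would put the point $t$ into that framework. Since $K_p=G_{(p)}(\Z_p)$ is hyperspecial, one arranges the symplectic representation $H$ so that the tensors $\{s_\alpha\}$ cutting out $G$ lie in $H_{\Z_p}^\otimes$, with $G_{(p),\Z_p}$ their stabiliser in $\GL(H_{\Z_p})$; these yield global sections $\{\bm{s}_{\alpha,p}\}$ of $T_p(A)^\otimes$ over $\Sh_K$. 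Following the argument of~\cite{kisin:abelian}---which becomes available for $p=2$ once one feeds in the comparison isomorphisms of Theorem~\ref{thm:kisin_lau_p_divisible} in place of Grothendieck--Messing theory---the crystalline realisation of these tensors is well defined and $\varphi$-invariant: one obtains tensors $\{\bm{s}_{\alpha,0}\}\subset M_0^\otimes$, where $M_0=\mathbb{D}(\mathcal{G}_t)(W)$, whose stabiliser in $\GL(M_0)$ is the reductive group $G_{(p)}\otimes_{\Z_{(p)}}W$ and such that the Hodge filtration on $M_0\otimes k$ is split by a cocharacter $\mu_0\colon\Gm\to G_k$. Proposition~\ref{prop:adapted} then produces a quotient $R_G$ of $R_t$ that is formally smooth over $W$ of relative dimension $d=\dim G_k-\dim P_k$; and since the flag variety $G/P_\mu$ is smooth and proper over $\co_{E,(v)}$ with generic fibre of dimension $\dim X$, we have $d=\dim X=\dim\Sh_K$.

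Next I would show that the canonical map $R_t\to\co_t$ factors through $R_G$. Let $L$ be a finite extension of $\mathrm{Frac}(W)$ and $x\colon\co_t\to\co_L$ a local homomorphism; it gives an $\co_L$-point of $\mathcal{S}_K$ over $t$, hence, after inverting $p$, a point of $\Sh_K$, and pulling back $A$ produces an abelian variety $A_x$ over $L$ with good reduction $\mathcal{G}_x=A_x[p^\infty]$ and $\Gamma_L$-invariant tensors $\{\bm{s}_{\alpha,p,x}\}\subset T_p(A_x)^\otimes$ with stabiliser $G_{\Z_p}$. By Theorems~\ref{thm:kisin_key_lemma} and~\ref{thm:kisin_lau_p_divisible}, the Breuil--Kisin module $\mathfrak{M}(\mathcal{G}_x)\cong\mathfrak{M}(T_p(A_x)^\vee)$ admits a trivialisation carrying the $\bm{s}_{\alpha,p,x}$ to standard tensors, the stabiliser identifications of Definition~\ref{defn:adapted} hold, the Hodge filtration on $\mathbb{D}(\mathcal{G}_x)(\co_L)$ is split by a cocharacter lifting $\mu_0$, and the crystalline realisations of the $\bm{s}_{\alpha,p,x}$ recover $\{\bm{s}_{\alpha,0}\}$; in other words $\mathcal{G}_x$ is $G$-adapted, so by Proposition~\ref{prop:adapted} the map $x$ factors through $R_G$. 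Since $\co_t$ is normal and flat over $W$, the ring $\co_t[p^{-1}]$ is reduced and the points $x$ above are Zariski dense in $\Spec\co_t[p^{-1}]$; hence $\ker(R_t\to R_G)$ maps to $0$ in $\co_t[p^{-1}]$, and, by $W$-flatness, in $\co_t$. Thus $R_t\to\co_t$ factors as $R_t\twoheadrightarrow R_G\xrightarrow{\,j\,}\co_t$.

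To conclude, recall that $\co_t$ is the normalisation of the image of $R_t$ in $\co_t$; by the previous step this image equals the image of $j$, so $\co_t$ is finite over $R_G$ via $j$. Then $R_G/\ker j$ has Krull dimension $\dim\co_t=\dim\mathcal{S}_K=\dim\Sh_K+1=d+1=\dim R_G$, and since $R_G$ is a complete regular local domain this forces $\ker j=0$; hence the image of $R_t$ in $\co_t$ is exactly $R_G$, so $\co_t$ is the normalisation of $R_G$, which, being formally smooth over $W$, is already normal. Therefore $\co_t=R_G$ is formally smooth over $W$. The step I expect to carry the real weight is the construction of the crystalline tensors $\{\bm{s}_{\alpha,0}\}$ together with the verification that their stabiliser is reductive and that the Hodge filtration splits: this is precisely where the characteristic $2$ case resisted the methods of~\cite{kisin:abelian}, and where Theorem~\ref{thm:kisin_lau_p_divisible} (built on Lau's classification) supplies what Grothendieck--Messing theory provides in odd residue characteristic. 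Everything after that is formal, granting the standard density of $p$-adic points in $\Spec\co_t[p^{-1}]$ and the implication that a point of $\Sh_K$ with good reduction gives rise to a $G$-adapted $p$-divisible group.
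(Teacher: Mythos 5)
Your route is essentially the paper's (crystalline tensors at $t$ from a chosen lift, Proposition~\ref{prop:adapted} to produce the formally smooth quotient $R_G$ of $R_t$, then show every $\co_L$-point of $\co_t$ is $G$-adapted and conclude by a dimension count), but there is a genuine gap at the decisive step. When you treat an arbitrary point $x\colon\co_t\to\co_L$, you assert that ``the crystalline realisations of the $\bm{s}_{\alpha,p,x}$ recover $\{\bm{s}_{\alpha,0}\}$'' as a consequence of Theorems~\ref{thm:kisin_key_lemma} and~\ref{thm:kisin_lau_p_divisible}. Those theorems do not give this: applied to the lift $x$ they only produce \emph{some} $\varphi$-invariant tensors $\{\bm{s}_{\alpha,\cris,x}\}\subset M_0^\otimes$ attached to the \'etale tensors at $x$, with no comparison to the fixed collection $\{\bm{s}_{\alpha,0}\}=\{\bm{s}_{\alpha,\cris,t}\}$ obtained from the initial lift $\tilde t$. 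But $G$-adaptedness in Definition~\ref{defn:adapted}, and hence the characterization of $R_G$ in Proposition~\ref{prop:adapted}, is relative to the \emph{fixed} tensors $\{\bm{s}_{\alpha,0}\}$ and the fixed subgroup $G\subset\GL(M_0)$ they cut out; knowing only that $\mathcal{G}_x$ is adapted to the stabilizer of its own crystalline tensors does not let you conclude that $x$ factors through $R_G$.

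This lift-independence of the crystalline realizations is exactly where the paper has to work: it first invokes the Blasius--Wintenberger theorem~\cite{blasius:padic} to identify the de Rham realizations of the $p$-adic tensors with the sections $\bm{s}_{\alpha,\dR}$ coming from the de Rham functor of~\eqref{subsec:derham realization} over $\Sh_K$, and then runs the parallel-transport argument of~\cite[Proposition 2.3.5]{kisin:abelian} to see that the crystalline tensors at the closed point $t$ do not depend on which lift was used. (The same identification is what underwrites your claim that $\mu_0$ lies in the conjugacy class of the inverse Hodge cocharacter, hence that $d=\dim X$.) Your closing density and dimension/normalization argument is fine and matches the paper's reduction to showing that every $\co_L$-point of $\co_t$ is $G$-adapted, but as written the proof is incomplete until you supply this comparison of tensors across lifts.
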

\begin{proof}
Set $H_{(p)} = H_\Z\otimes\Z_{(p)}$, and fix any collection of tensors $\{s_{\alpha}\}\subset H_{(p)}^\otimes$ whose pointwise stabilizer is $G_{(p)}\subset\GL(H_{(p)})$.\footnote{It is always possible to find such a collection; see~\cite[Lemma 2.3.1]{kisin:abelian}, and also the footnote on p. 72 of~\cite{mp:toroidal}. More precisely, the first cited result shows the existence of the tensors under the assumption that $G$ has no factors of type $B$ when $p=2$, and the footnote explains why this assumption is unnecessary.} These tensors give rise to global sections $\{\bm{s}_{\alpha,p}\}\subset H^0(\Sh_K,\bm{H}_p^\otimes)$.

Choose any lift $\tilde{t}\in\mathcal{S}_K(\co_L)$ of $t$ to a point valued in the ring of integers $\co_L$ of a finite extension $L/L_0$ contained in $L_0^\alg$. By enlarging $k$ if necessary, we can assume that $L$ is totally ramified over $L_0$. Let $\Gamma_L = \Gal(L^\alg/L)$ be the absolute Galois group of $L$. We obtain $\Gamma_L$-invariant tensors $\{\bm{s}_{\alpha,p,\tilde{t}}\}\subset \bm{H}_{p,\tilde{t}}^\otimes$, whose stabilizer is isomorphic to $G_{(p),\Z_p}$. 

Set $\bm{H}_{\cris,t} = \mathbb{D}(\mathcal{G}_t)(W)$ and $\bm{H}_{\dR,\tilde{t}} = H^1_{\dR}(A_{\tilde{t}}/\co_L)^\vee$. Then Theorems~\ref{thm:kisin_key_lemma} and~\ref{thm:kisin_lau_p_divisible} give us $\varphi$-invariant tensors
\[
\{\bm{s}_{\alpha,\cris,t}\}\subset \bm{H}_{\cris,t}^\otimes,
\]
whose stabilizer is isomorphic to $G_W$, as well as tensors
\[
\{\bm{s}_{\alpha,\dR,\tilde{t}}\}\subset \Fil^0\bm{H}_{\dR,\tilde{t}}^\otimes,
\]
whose stabilizer is isomorphic to $G_{\co_L}$. Moreover, the Hodge filtration on $\bm{H}_{\dR,\tilde{t}}$ can be split via a cocharacter of $G_{\co_L}$.

By a theorem of Blasius-Wintenberger~\cite{blasius:padic} on the compatibility of the de Rham comparison isomorphism with homological realizations of Hodge cycles on abelian varieties, the tensors $\{\bm{s}_{\alpha,\dR,\tilde{t}}\}$ coincide with those obtained from $\{s_{\alpha}\}$ via the de Rham functor described in~\eqref{subsec:derham realization}.

Also, by construction, under the canonical identification
\[
\bm{H}_{\cris,t}\otimes_Wk = H^1_{\dR}(A_t/k)^\vee = \bm{H}_{\dR,\tilde{t}}\otimes_{\co_L}k,
\] 
the tensors $\{\bm{s}_{\alpha,\cris,t}\otimes 1\}$ are carried to $\{\bm{s}_{\alpha,\dR,\tilde{t}}\otimes 1\}$. In particular, the Hodge filtration on 
\[
\bm{H}_{\dR,t} \coloneqq H^1_{\dR}(A_t/\F_p^{\alg})^\vee 
\]
is split by a cocharacter $\mu_0:\mathbb{G}_m\to G_{\F_p^{\alg}}$. 

This implies that we can apply the theory of Section~\ref{sec:deformation} with $\mathcal{G}_0=\mathcal{G}_t$, and $\{\bm{s}_{\alpha,0}\} = \{\bm{s}_{\alpha,\cris,t}\}$. Therefore, by Proposition~\ref{prop:adapted}, we have a canonical formally smooth quotient $R_G^{\mathrm{univ}}$ of $R_t$ characterized by the property that any point $\tilde{t}':R_t\to \co_L$ factors through $R_G^{\mathrm{univ}}$ if and only if the corresponding lift $\mathcal{G}_{\tilde{t}'}$ is $G_W$-adapted. We claim that $R_G^{\mathrm{univ}}$ is identifed with $\co_t$. 

This is done precisely as in the proof of Proposition 2.3.5 of~\cite{kisin:abelian}. First, an easy dimension count, using the fact that $\mu_0$ is conjugate to the inverse of the Hodge cocharacter associated with the Shimura datum $(G,X)$, shows that it is enough to prove that $\co_t$ is a quotient of $R_G^{\mathrm{univ}}$. In other words, we must show that every map $\tilde{t}':\co_t\to \co_L$ is $G$-adapted. But, every such $\tilde{t}'$ gives us tensors $\{\bm{s}_{\alpha,p,\tilde{t}'}\}\subset \bm{H}_{p,\tilde{t}'}^\otimes$, which give rise, as in Theorem~\ref{thm:kisin_key_lemma}, to $\varphi$-invariant tensors $\{\bm{s}_{\alpha,\mathfrak{S},\tilde{t}'}\}\subset \mathfrak{M}(\mathcal{G}_{\tilde{t}'})^\otimes$, whose stabilizer is isomorphic to $G_{\mathfrak{S}}$. 

These tensors in turn give rise in turn to $\varphi$-invariant tensors $\{\bm{s}_{\alpha,\cris,\tilde{t}'}\}\subset \bm{H}_{\cris,t}^\otimes$, and de Rham tensors $\{\bm{s}_{\alpha,\dR,\tilde{t}'}\}\subset \Fil^0\bm{H}_{\dR,\tilde{t}'}^\otimes$.

To finish the proof, it only remains to check that the collection $\{\bm{s}_{\alpha,\cris,\tilde{t}}\}$ coincides with the collection $\{\bm{s}_{\alpha,\cris,t}\}$ constructed from the initial choice of lift $\tilde{t}$. This follows as in the proof of~\cite[Proposition 2.3.5]{kisin:abelian}, using a parallel transport argument.
\end{proof}

The explicit construction of $R_G^{\mathrm{univ}}$ in Section~\ref{sec:deformation} now implies (see, for instance, Corollary 4.13 of~\cite{mp:reg})
\begin{proposition}\label{prop:de_rham_realization}
The functor from algebraic $\Q$-representations $N$ of $G$ to filtered vector bundles $(\bm{N}_{\dR,\Sh_K},\Fil^\bullet \bm{N}_{\dR,\Sh_K})$ extends canonically to an exact tensor functor $N\to\bm{N}_{\dR}$ from algebraic $\Z_{(p)}$-representations $N$ of $G_{(p)}$ to filtered vector bundles on $\mathcal{S}_{K}$ equipped with an integrable connection. When $N = H_{(p)}$, the associated filtered vector bundle with integrable connection is simply $\bm{H}_{\dR}$, the relative first de Rham homology of $A \to \mathcal{S}_{K}$.
\end{proposition}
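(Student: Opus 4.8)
The plan is to realise $\bm{H}_{\dR}$ on $\mathcal{S}_K$ as carrying a canonical $G_{(p)}$-structure, and then to define $\bm{N}_{\dR}$ by the associated-bundle construction; this is the integral analogue of Deligne's Tannakian description of $\bm{N}_{\dR,\Sh_K}$ on the generic fibre. Concretely, fix tensors $\{s_\alpha\}\subset H_{(p)}^\otimes$ cutting out $G_{(p)}\subset\GL(H_{(p)})$, and let $\{\bm{s}_{\alpha,\dR}\}$ be the de Rham tensors on $\Sh_K$ attached to them by the functor of \S\ref{subsec:derham realization}. The first and main task is to extend these to global sections $\{\bm{s}_{\alpha,\dR}\}\subset H^0(\mathcal{S}_K,\bm{H}_{\dR}^\otimes)$ that are horizontal for the Gauss--Manin connection $\nabla$. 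I would do this by gluing: over the dense open $\Sh_K$ they are given, and at any closed point $t$ of the special fibre Proposition~\ref{prop:local model} identifies $\co_t$ with the ring $R_G^{\mathrm{univ}}$ of Proposition~\ref{prop:adapted}, over which the Faltings construction of Proposition~\ref{prop:dieudonne faltings construction} presents $\bm{H}_{\dR}|_{\Spec\co_t}$ as $\co_t\otimes_W M_0$ with $\bm{s}_{\alpha,\dR}$ equal to the \emph{constant} tensor $1\otimes\bm{s}_{\alpha,0}$, which is in particular $\nabla$-horizontal. One checks that this local section agrees with the generic one after inverting $p$ by the parallel transport argument together with the Blasius--Wintenberger compatibility, exactly as in the proof of Proposition~\ref{prop:local model}; since $\mathcal{S}_K$ is normal, these patch to a global horizontal section. (Alternatively, $\nabla\bm{s}_{\alpha,\dR}$ is a section vanishing on $\Sh_K$, hence zero.)

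Granting this, set $\mathcal{P}_{\dR} = \underline{\Isom}\bigl((H_{(p)}\otimes\co_{\mathcal{S}_K},\{s_\alpha\}),(\bm{H}_{\dR},\{\bm{s}_{\alpha,\dR}\})\bigr)$, the $\mathcal{S}_K$-scheme of tensor-matching trivialisations. Over $\Sh_K$ this is the $G$-torsor underlying Deligne's de Rham realisation, and over each $\Spec\co_t$ it is trivial: one trivialises by composing the Faltings isomorphism $\co_t\otimes_W M_0\xrightarrow{\sim}\bm{H}_{\dR}|_{\Spec\co_t}$ with a tensor-preserving identification $M_0\cong W\otimes_{\Z_{(p)}}H_{(p)}$, which exists by Theorem~\ref{thm:kisin_key_lemma} and the $G$-adaptedness built into the deformation-theoretic construction. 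Checking at the completed local rings together with the generic fibre — which suffices by faithfully flat descent, as in Kisin's argument — it follows that $\mathcal{P}_{\dR}$ is a $G_{(p)}$-torsor over $\mathcal{S}_K$, and $\nabla$, preserving the $\{\bm{s}_{\alpha,\dR}\}$, induces an integrable connection on it. Now for any algebraic $\Z_{(p)}$-representation $N$ of $G_{(p)}$ put $\bm{N}_{\dR} = \mathcal{P}_{\dR}\times^{G_{(p)}}N$: this is a vector bundle on $\mathcal{S}_K$; the assignment $N\mapsto\bm{N}_{\dR}$ is an exact tensor functor because pushout of a torsor is; it inherits an integrable connection from $\mathcal{P}_{\dR}$; and it carries a filtration $\Fil^\bullet\bm{N}_{\dR}$ induced by the Hodge cocharacter, which is $G_{(p)}$-valued up to conjugacy with stabiliser the parabolic $P\subset G_{(p)}$, hence defines a canonical filtration on each $\bm{N}_{\dR}$. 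For $N = H_{(p)}$ the associated bundle is tautologically $\bm{H}_{\dR}$, the relative first de Rham homology of $A\to\mathcal{S}_K$ with its Hodge filtration; and restricting everything to $\Sh_K$ recovers $(\bm{N}_{\dR,\Sh_K},\Fil^\bullet\bm{N}_{\dR,\Sh_K})$ because $\mathcal{P}_{\dR}|_{\Sh_K}$ is Deligne's torsor, using Blasius--Wintenberger once more to match the integral de Rham tensors with the absolute-Hodge ones.

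The hard part is the first paragraph: producing the global, horizontal de Rham tensors on $\mathcal{S}_K$, since it is precisely here that one feeds in the explicit Faltings-type description of the complete local rings of $\mathcal{S}_K$ from Section~\ref{sec:deformation} (so that locally $\bm{H}_{\dR}$ trivialises with the tensors becoming constant) and reconciles it with the analytically defined tensors on the generic fibre via parallel transport and Blasius--Wintenberger. Once the $G_{(p)}$-torsor $\mathcal{P}_{\dR}$ is in place, everything else is formal Tannakian bookkeeping, which is why the statement can be quoted from \cite[Corollary 4.13]{mp:reg} (and is the analogue over the integral model of the de Rham realisation functor of \cite{kisin:abelian}).
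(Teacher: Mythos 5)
Your proposal matches the paper's (cited) proof: the paper deduces the proposition from the explicit construction of $R_G^{\mathrm{univ}}$ in Section~\ref{sec:deformation} by invoking Corollary 4.13 of~\cite{mp:reg}, whose argument is exactly yours --- extend the horizontal de Rham tensors over $\mathcal{S}_K$ using the Faltings-type description of the complete local rings (matched with the generic fibre by parallel transport and Blasius--Wintenberger), form the $G_{(p)}$-torsor of tensor-preserving trivialisations, and obtain $\bm{N}_{\dR}$ with its connection and filtration by pushout along representations. The only slip is immaterial: the constant tensors over $\co_t$ are horizontal not because the Gauss--Manin connection is the constant one (it is not), but by $\varphi$-invariance --- or, as your parenthetical correctly notes, because $\nabla\bm{s}_{\alpha,\dR}$ is a section of a vector bundle on the $\Z_{(p)}$-flat scheme $\mathcal{S}_K$ vanishing on the generic fibre, hence zero.
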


\subsection{}
Write $\widehat{\mathcal{S}}_{K}$ for the formal completion of $\mathcal{S}_{K}$ along $\mathcal{S}_{K,k(v)}$. The relative first crystalline cohomology of $A$ over $\mathcal{S}_{K,(v)}$ gives a Dieudonn\'e $F$-crystal $\bm{H}^\vee_{\cris}$ over $\mathcal{S}_{K,k(v)}$ whose evaluation on $\widehat{\mathcal{S}}_K$ is canonically isomorphic to the $p$-adic completion of $\bm{H}^\vee_{\dR}$ as a vector bundle with integrable connection.

We will now expand our definition of an $F$-crystal over $\mathcal{S}_{K,k(v)}$ to mean a crystal of vector bundles $\bm{N}$ over $\mathcal{S}_{K,k(v)}$ equipped with an isomorphism $\mathrm{Fr}^*\bm{N} \xrightarrow{\simeq}\bm{N}$ in the $\Q_p$-linear \emph{isogeny} category associated with the category of crystals over $\mathcal{S}_{K,k(v)}$.

Given an algebraic $\Z_{(p)}$-representation $N_{(p)}$ of $G_{(p)}$, the restriction of the vector bundle $\bm{N}_{\dR}$ to $\widehat{\mathcal{S}}_K$ is the evaluation of a canonical crystal $\bm{N}_{\cris}$ of vector bundles over $\mathcal{S}_{K,k(v)}$. Since $H_{(p)}$ is a faithful representation of $G_{(p)}$, we now have

\begin{proposition}
\label{prop:crystalline_realization}
There is a unique structure of an $F$-crystal on $\bm{N}_{\cris}$, which when $N_{(p)}= H^\vee_{(p)}$ agrees with the canonical $F$-crystal structure on the Dieudonn\'e $F$-crystal $\bm{H}^\vee_{\cris}$. This gives an exact tensor functor $N_{(p)}\mapsto \bm{N}_{\cris}$ from $\Z_{(p)}$-representations of $G_{(p)}$ to $F$-crystals over $\mathcal{S}_{K,k(v)}$.
\end{proposition}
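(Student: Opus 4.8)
The plan is to bootstrap from the faithful representation $H_{(p)}$ and transport the Frobenius of $\bm{H}^\vee_{\cris}$ along the de Rham/crystalline tensor functor. Because $H_{(p)}$ (equivalently $H^\vee_{(p)}$) is a faithful representation of the reductive $\Z_{(p)}$-group $G_{(p)}$, every algebraic $\Z_{(p)}$-representation $N_{(p)}$ of $G_{(p)}$ occurs as a $G_{(p)}$-stable subquotient $W_{(p)}\subset V_{(p)}$ of a finite partial sum of tensor, symmetric, exterior and dual powers of $H^\vee_{(p)}$; and, since $G_{(p)}$ is the scheme-theoretic stabilizer of the tensors $\{s_\alpha\}\subset H_{(p)}^\otimes$, the idempotents $e_V,e_W$ cutting out $V_{(p)}$ and $W_{(p)}$ are $G_{(p)}$-equivariant, hence are themselves $G_{(p)}$-invariant tensors in $H_{(p)}^\otimes$ (the standard Tannakian/reductive-group input used throughout \cite{kisin:abelian}). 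Applying the de Rham tensor functor of Proposition~\ref{prop:de_rham_realization} to the $G_{(p)}$-equivariant maps $\mathbf{1}\to (H^\vee_{(p)})^\otimes$ given by the $s_\alpha$ produces horizontal global sections $\{\bm{s}_{\alpha,\dR}\}$ of $(\bm{H}^\vee_{\dR})^\otimes$, and restricting these to $\widehat{\mathcal{S}}_K$ and spreading them to the crystal gives horizontal global sections $\{\bm{s}_{\alpha,\cris}\}$ of $(\bm{H}^\vee_{\cris})^\otimes$; under the resulting description $\bm{N}_{\cris}$ is the corresponding subquotient of $(\bm{H}^\vee_{\cris})^\otimes$, and the pointwise stabilizer of $\{\bm{s}_{\alpha,\cris}\}$ is a form of $G$ by Theorem~\ref{thm:kisin_key_lemma}.

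The crux is that each $\bm{s}_{\alpha,\cris}$ is $\varphi$-invariant. The Dieudonn\'e $F$-crystal $\bm{H}^\vee_{\cris}$ carries its canonical Frobenius, and in the $\Q_p$-linear isogeny category the induced map $\mathrm{Fr}^*\bm{H}^\vee_{\cris}\xrightarrow{\simeq}\bm{H}^\vee_{\cris}$ is a horizontal morphism which propagates to all tensor constructions. Hence $\bm{s}_{\alpha,\cris}$ and its Frobenius-transform are both horizontal sections of $(\bm{H}^\vee_{\cris})^\otimes\pinv$, so to see that they coincide it suffices to check this at one point of each connected component of the finite-type $k(v)$-scheme $\mathcal{S}_{K,k(v)}$. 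Choosing a closed point $t$ of such a component and lifting it, after enlarging the residue field, to a point $\tilde{t}\in\mathcal{S}_K(\co_L)$ exactly as in the proof of Proposition~\ref{prop:local model}, the fibre $\bm{s}_{\alpha,\cris,t}\in \mathbb{D}(\mathcal{G}_t)(W)^\otimes$ coincides, by the compatibilities established there (Blasius--Wintenberger together with the parallel-transport argument of that proof), with the $\varphi$-invariant crystalline tensor produced by Theorems~\ref{thm:kisin_key_lemma} and~\ref{thm:kisin_lau_p_divisible}. As every connected component of $\mathcal{S}_{K,k(v)}$ contains such a point, $\varphi$-invariance holds everywhere.

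Granting this, the crystalline Frobenius of $\bm{H}^\vee_{\cris}$ preserves all the sections $\{\bm{s}_{\alpha,\cris}\}$, hence commutes in the isogeny category with any endomorphism built from them, in particular with the idempotents $e_V$ and $e_W$ of the first paragraph; it therefore induces an isomorphism $\mathrm{Fr}^*\bm{N}_{\cris}\xrightarrow{\simeq}\bm{N}_{\cris}$, i.e.\ an $F$-crystal structure on $\bm{N}_{\cris}$, which for $N_{(p)}=H^\vee_{(p)}$ is the Dieudonn\'e Frobenius by construction. This structure is independent of the chosen presentation of $N_{(p)}$, since two presentations are intertwined by $G_{(p)}$-equivariant maps, which become morphisms of $F$-crystals by the same $\varphi$-invariance; thus $N_{(p)}\mapsto (\bm{N}_{\cris},\mathrm{Fr})$ is a well-defined functor. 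It is an exact tensor functor because forgetting $\mathrm{Fr}$ recovers the exact tensor functor $N_{(p)}\mapsto \bm{N}_{\cris}$ coming from Proposition~\ref{prop:de_rham_realization} (restricted to $\widehat{\mathcal{S}}_K$ and spread out), and the Frobenii are compatible with $\otimes$ by construction. Uniqueness is then automatic: any $F$-crystal structure making the functor tensor-compatible and restricting to the Dieudonn\'e one at $H^\vee_{(p)}$ is forced on tensor constructions, hence on sub-objects and quotients since morphisms of $F$-crystals respect these.

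The main obstacle is the second paragraph: one must descend a statement about the (far from proper) special fibre $\mathcal{S}_{K,k(v)}$ to a pointwise check — relying on the horizontality of both the sections $\bm{s}_{\alpha,\cris}$ and of the crystalline Frobenius — and then match the fibre of the globally constructed tensor with the $\varphi$-invariant tensor produced by the local deformation theory of Section~\ref{sec:deformation}, which is exactly where the comparison results invoked in the proof of Proposition~\ref{prop:local model} are needed. Everything downstream of that is Tannakian bookkeeping.
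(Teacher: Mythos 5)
Your first two paragraphs are in the right spirit (the paper itself offers no written proof beyond ``since $H_{(p)}$ is faithful\dots'', the intended input being the explicit local construction of Section~\ref{sec:deformation}), and the reduction of $\varphi$-invariance of the $\bm{s}_{\alpha,\cris}$ to a pointwise check via horizontality, a lift $\tilde{t}$, Blasius--Wintenberger and the Breuil--Kisin comparisons is exactly the kind of argument the paper relies on in Proposition~\ref{prop:local model}. The gap is in the transfer step. First, your claim that a $G_{(p)}$-stable subquotient $W_{(p)}\subset V_{(p)}$ over $\Z_{(p)}$ is cut out by $G_{(p)}$-equivariant idempotents lying in $H_{(p)}^\otimes$ is false: representations of a reductive group scheme over $\Z_{(p)}$ are not semisimple, and a $G_{(p)}$-stable submodule need not be a direct summand -- this is precisely why one works with subquotients. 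Such idempotents exist only after inverting $p$; since the Frobenius is only required in the isogeny category this is repairable, but the proof as written uses the integral statement. Second, and more seriously, the inference ``$\mathrm{Fr}$ preserves the $\bm{s}_{\alpha,\cris}$, hence commutes with any endomorphism built from them, in particular with $e_V,e_W$'' is not justified: a $G_{\Q_p}$-equivariant idempotent (or a general equivariant map between tensor constructions) is not an expression in the finitely many chosen tensors $s_\alpha$, and Frobenius-invariance of those sections does not formally yield Frobenius-equivariance of the crystalline realization of every equivariant map. That commutation is the actual content of the proposition, so as written the third paragraph assumes what is to be proved.

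Both available repairs stay inside the paper's toolkit. (a) Run your second paragraph not only for the $s_\alpha$ but for every $G_{(p)}$-equivariant map $f\colon V_{(p)}\to V'_{(p)}$ between tensor constructions: such an $f$ is a $G$-invariant tensor, hence a Hodge cycle at every complex point and therefore absolutely Hodge by Deligne, so Blasius--Wintenberger and the parallel-transport/Breuil--Kisin comparison apply to $\bm{f}$ exactly as to the $\bm{s}_\alpha$; the same rigidity argument then shows $\bm{f}_{\cris}$ is Frobenius-equivariant in the isogeny category, and exactness of $N_{(p)}\mapsto\bm{N}_{\cris}$ puts an $F$-structure on every subquotient with no idempotents needed. (b) Closer to the paper's intended one-line derivation: over the completion at each closed point, Propositions~\ref{prop:dieudonne faltings construction} and~\ref{prop:de_rham_realization} identify $(\bm{H}^\vee_{\cris},\{\bm{s}_{\alpha,\cris}\})$ with the constant pair $(R_G\otimes_W M_0,\{1\otimes\bm{s}_{\alpha,0}\})$ and identify $\bm{N}_{\cris}$ with $R_G\otimes N_{(p)}$ compatibly with all equivariant maps; the Frobenius is $g(1\otimes\varphi_0)$ with $g\in U^{\op}_G(R_G)\subset G(R_G)$ and with $\varphi_0$ fixing the $\bm{s}_{\alpha,0}$, hence it lies in $G$ up to the $\sigma$-twist and visibly induces a Frobenius on $R_G\otimes N_{(p)}$ for every $N_{(p)}$; your horizontality rigidity then glues these local structures and gives uniqueness. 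With either repair the statement follows; without one, the proposal has a genuine gap at its central step.
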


As in~\cite[\S~3]{kisin:abelian} (see especially Corollary 3.4.14), Proposition~\ref{prop:local model} implies
\begin{theorem}
\label{theorem:canonical models}
Let $(G,X)$ be a Shimura datum of abelian type, and let $K_p\subset G(\Q_p)$ be a hyperspecial compact open subgroup. Then, for any place $v\vert p$ of the reflex field $E = E(G,X)$, the pro-Shimura variety
\[
\Sh_{K_p} = \varprojlim_{K^p\subset G(\A_f^p)}\Sh_{K_pK^p}
\]
over $E$ admits an integral canonical model $\mathcal{S}_{K_p}$ over $\co_{E,(v)}$. That is, $\mathcal{S}_{K_p}$ is regular and formally smooth over $\co_{E,(v)}$ with generic fiber $\Sh_{K_p}$, and, given any other regular and formally smooth scheme $S$ over $\co_{E,(v)}$, any map $S_E\to\Sh_{K_p}$ extends to a map $S\to\mathcal{S}_{K_p}$.

When $(G,X)$ is of Hodge type, so that $G$ admits a polarizable faithful representation $H$ of weights $(0,-1),(-1,0)$, we have
\[
\mathcal{S}_{K_p} = \varprojlim_{K^p\subset G(\A_f^p)}\mathcal{S}_{K_pK^p}
\]
where $\mathcal{S}_{K_pK^p}$ is constructed using the representation $H$ as in~\eqref{subsec:SK const}.
\end{theorem}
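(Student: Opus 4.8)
The plan is to follow the strategy of \cite[\S 3]{kisin:abelian} essentially verbatim, observing at each step that the only characteristic-specific inputs used there are subsumed by Proposition~\ref{prop:local model} (which now holds without restriction at $p=2$) together with the tensor functors of Propositions~\ref{prop:de_rham_realization} and~\ref{prop:crystalline_realization}. First I would treat the Hodge-type case. Let $(G,X)$ be of Hodge type with a fixed symplectic embedding $G\hookrightarrow\GSp(H,\psi)$ and $K_p$-stable lattice $H_\Z$; form $\mathcal{S}_{K_pK^p}$ as the normalization of $\co_{E,(v)}\otimes_\Z\mathcal{X}_{d,m}$ in $\Sh_{K_pK^p}$, as in~\eqref{subsec:SK const}, and set $\mathcal{S}_{K_p}=\varprojlim_{K^p}\mathcal{S}_{K_pK^p}$. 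Proposition~\ref{prop:local model} shows that for every finite residue extension $k$ and every $k$-point $t$, the complete local ring $\co_t$ is formally smooth over $W(k)$; since the transition maps $\mathcal{S}_{K_pK'^p}\to\mathcal{S}_{K_pK^p}$ are finite \'etale for $K'^p\subset K^p$ small enough (away from $p$), one deduces that $\mathcal{S}_{K_p}$ is regular and formally smooth over $\co_{E,(v)}$, with generic fiber $\Sh_{K_p}$ by construction.

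The extension property is the crux of the Hodge-type case. Given a regular formally smooth $S$ over $\co_{E,(v)}$ and a map $S_E\to\Sh_{K_p}$, one must extend it to $S\to\mathcal{S}_{K_p}$. Here I would reproduce the argument of \cite[Corollary 3.4.14, Proposition 2.3.5]{kisin:abelian}: the composite $S_E\to\Sh_{K_p}\to\mathcal{X}_{d,m,E}$ gives a polarized abelian scheme over $S_E$; by the N\'eron--Ogg--Shafarevich criterion and Faltings's purity / the extension property of abelian schemes over regular bases with semistable-at-worst reduction (using that $S$ is formally smooth), this extends to a polarized abelian scheme over $S$, hence a map $S\to\mathcal{X}_{d,m}$; one then invokes the universal property of normalization (stated in the excerpt, \S\ref{subsec:SK const}) to factor $S\to\mathcal{S}_{K_pK^p}$, compatibly in $K^p$. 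The point requiring the local result is exactly that $\mathcal{S}_{K_p}$ is formally smooth, so that the valuative/formal-local checks go through; this is precisely what Proposition~\ref{prop:local model} supplies in residue characteristic $2$.

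Finally, for the abelian-type case I would deduce it from the Hodge-type case by Deligne's and Kisin's formalism of connected Shimura varieties and the $\mathcal{A}(G)$-action, exactly as in \cite[\S 3.3--3.4]{kisin:abelian}: one passes to a Hodge-type datum $(G_1,X_1)$ covering the connected component, takes the integral canonical model there, forms the quotient by the appropriate arithmetic subgroup and twists by the adjoint action to descend to $(G,X)$, checking that formal smoothness and the extension property are preserved under these operations (they are, since \'etale quotients and the relevant twists preserve regularity and formal smoothness). I expect the main obstacle to be purely expository rather than mathematical: nothing in \cite[\S 3]{kisin:abelian} actually used $p>2$ except through the deformation-theoretic input now replaced by Section~\ref{sec:deformation}, so the real content is verifying that every appeal in that section of \emph{loc.\ cit.} to Grothendieck--Messing theory or to \cite[\S 1.5]{kisin:abelian} can be routed through Proposition~\ref{prop:adapted} and Theorem~\ref{thm:kisin_lau_p_divisible}; the uniqueness of the integral canonical model (which makes ``the'' model well-defined) follows formally from the extension property applied to two candidates, as in \cite{kisin:abelian}.
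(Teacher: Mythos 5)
Your proposal is correct and follows essentially the same route as the paper, whose entire proof is the observation that, with Proposition~\ref{prop:local model} (and the realization functors of Propositions~\ref{prop:de_rham_realization} and~\ref{prop:crystalline_realization}) in hand, the construction and the Hodge-to-abelian-type descent of \cite[\S 3]{kisin:abelian}, especially Corollary 3.4.14, go through unchanged at $p=2$. Your elaboration of the normalization, extension-property, and connected-components steps is exactly the content of that citation.
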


\section{The Tate conjecture in characteristic $2$}\label{sec:tate}

In this section, we show how to remove the hypothesis $p\neq 2$ from the results of~\cite{mp:tatek3}. We will assume that the reader is familiar with the methods of that paper, and will only indicate the necessary changes. The material in this section is due to the second named author.

\subsection{}
We will need a particular class of Shimura varieties of abelian type associated with a quadratic lattice $L$ of signature $(n,2)$ with $n\geq 1$. This theory is summarized in~\cite[\S 4]{mp:tatek3}. 

From $L$ we obtain a Shimura datum $(G_L,X_L)$, where $G_L = \SO(L_\Q)$ and $X_L$ is the symmetric domain of oriented negative definite planes in $L_\R$. Associated with $L$ is also the discriminant kernel $K_L\subset G_L(\A_f)$, which is the largest subgroup that stabilizes $L_{\widehat{\Z}}$ and acts trivially on the discriminant module $L^\vee/L$. 

Associated with any compact open subgroup $K\subset G(\A_f)$ is the Shimura variety (or rather algebraic stack)
\[
\Sh_K(L) \coloneqq \Sh_{K}(G_L,X_L),
\]
which is defined over $\Q$. When $K=K_L$, we will set $\Sh(L) = \Sh_{K_L}(L)$.

As in \emph{loc. cit.}, we will assume that $L$ contains a hyperbolic plane, so that we have
\[
\Sh(L)(\C) = \Gamma_L\backslash X_L,
\]
where $\Gamma_L\subset \SO(L)(\Z)$ is the discriminant kernel. 

\subsection{}\label{subsec:motives char 0}
The Shimura variety $\Sh(L)$ carries a canonical family of $\Z$-motives\footnote{Here, this means absolute Hodge motives; see~\cite[\S 2]{mp:tatek3}} $\bm{L}$ associated with the standard representation $L_\Q$ of $G_L$, and the lattice $L_{\widehat{\Z}}$. More precisely, for every point $s\to \Sh(L)$, the cohomological realizations of the $\Z$-motive $\bm{L}_s$ will be the fibers at $s$ of the sheaves $\bm{L}_B$, $\bm{L}_{\ell}$, and $\bm{L}_{\dR}$ associated with $(L_\Q,L_{\widehat{\Z}})$.

Moreover, as in~\cite[\S~4.4]{mp:tatek3}, we find that that there exists a finite \'etale cover $\widetilde{\Sh}(L)\to \Sh(L)$ associated with the central extension $\GSpin(L_\Q)\to G_L$, and an abelian scheme $A^\mathrm{KS}\to \widetilde{\Sh}(L)$, whose realizations are associated with the left regular representation $H$ of $\GSpin(L_\Q)$ on the Clifford algebra $C(L_\Q)$, and thus give us the family of motives $\bm{H}$ over $\widetilde{\Sh}(L)$. The family of motives $\underline{\End}(\bm{H})$ associated with $\End(H)$ descends over $\Sh(L)$, as does the sheaf $\bm{E}$ of endomorphisms of the abelian scheme $A^\mathrm{KS}$, along with the homological realization map
\[
\bm{E} \to \underline{\End}(\bm{H}).
\]

The action of $L$ on the Clifford algebra $C(L)$ by left multiplication induces an embedding of families of motives
\[
\bm{L} \hookrightarrow \underline{\End}(\bm{H})
\]
over $\Sh(L)$.

Given any scheme $T\to \Sh(L)$, a \defnword{special endomorphism} over $T$ will be a section of $\bm{E}$ over $T$, whose homological realizations land in the image of $\bm{L}$ at all geometric points of $T$. We will write $L(T)$ for the group of special endomorphisms over $T$. Composition in $\underline{\End}(\bm{H})$ induces a canonical positive definite quadratic form on $L(T)$ with values in $\Z$.

\subsection{}\label{subsec:diamond}
Let $L^\diamond$ be another quadratic lattice of signature $(n^\diamond,2)$ such that we have an isometric embedding $L\hookrightarrow L^\diamond$ onto a direct summand of $L^\diamond$. Let $\Lambda\subset L^\diamond$ be the orthogonal complement of $L$, so that we can view $G_L$ as the subgroup of $G_{L^\diamond}$ that acts trivially on $\Lambda$. We then have an embedding of Shimura data
\[
(G_L,X_L)\hookrightarrow (G_{L^\diamond},X_{L^\diamond})
\]
and thus a map $\Sh(L)\to \Sh(L^\diamond)$ of Shimura varieties. 

For any $T\to\Sh(L^\diamond)$, write $L^\diamond(T)$ for the associated space of special endomorphisms over $T$. Then there is a canonical isometric embedding $\Lambda\hookrightarrow L^\diamond(\Sh(L))$ such that, for any $T\to \Sh(L)$, we have a canonical identification
\[
L(T) = \Lambda^\perp\subset L^\diamond(T).
\]

\subsection{}\label{subsec:self-dual}
Suppose now that $L_{(p)} \coloneqq L_{\Z_{(p)}}$ is self-dual. Then the associated special orthogonal group $G_{(p)} \coloneqq \SO(L_{(p)})$ is a reductive model for $G_L$ over $\Z_{(p)}$, and $K_{L,p} = G_{(p)}(\Z_p)$ is a hyperspecial compact open subgroup of $G_L(\Q_p)$. Therefore, by Theorem~\ref{theorem:canonical models}, we have a smooth integral canonical model $\mathcal{S}(L)_{(p)}$ over $\Z_{(p)}$ for $\Sh(L)$. More precisely, the theorem gives us an integral canonical model $\mathcal{S}_{K^pK_{L,p}}(L)_{(p)}$ over $\Z_{(p)}$ for $\Sh_{K^pK_{L,p}}(L)$, when $K^p\subset G_L(\A_f^p)$ is a sufficiently small neat compact open subgroup. We obtain $\mathcal{S}(L)_{(p)}$ as the quotient stack of this integral model by the natural action of the finite group $K^p_L/K^p$. 

By Proposition~\ref{prop:crystalline_realization}, we now have crystalline realizations $\bm{L}_{\cris}$ and $\underline{\End}(\bm{H}_{\cris})$ of $\bm{L}$ and $\underline{\End}(\bm{H})$, respectively: These are $F$-crystals over the special fiber $\mathcal{S}(L)_{\F_p}$ of $\mathcal{S}(L)_{(p)}$, and we have a canonical inclusion $\bm{L}_{\cris}\hookrightarrow \underline{\End}(\bm{H}_{\cris})$ of $F$-crystals, whose quotient is once again an $F$-crystal over $\mathcal{S}(L)_{\F_p}$.

Moreover, the abelian scheme $A^{\mathrm{KS}}$ extends over a finite \'etale cover of $\mathcal{S}(L)_{(p)}$---namely, the integral canonical model for $\widetilde{\Sh}(L)$---and its endomorphism scheme $\bm{E}$ descends over $\mathcal{S}(L)_{(p)}$, and is equipped with a crystalline realization map $\bm{E} \to \underline{\End}(\bm{H}_{\cris})$.

Given a point $s\to \mathcal{S}(L)_{(p)}$ in characteristic $p$, a \defnword{special endomorphism} over $s$ will be a section of $\bm{E}$ whose crystalline realization lands in the subspace
\[
\bm{L}_{\cris,s}\subset \End(\bm{H}_{\cris,s}).
\]

Given $T\to \mathcal{S}(L)_{(p)}$, a \defnword{special endomorphism} over $T$ will be a section of $\bm{E}$, which induces a special endomorphism at every geometric point of $T$. Write $L(T)$ for the space of special endomorphisms over $T$.

\subsection{}
Consider the de Rham realization $\bm{L}_{\dR}$ of $L$ over $\mathcal{S}(L)_{(p)}$: it is equipped with a non-degenerate quadratic form $\bm{Q}:\bm{L}_{\dR} \to \co_{\mathcal{S}(L)_{(p)}}$, as well as a three-step filtration
\[
0\subset \Fil^1\bm{L}_{\dR}\subset \Fil^0\bm{L}_{\dR}\subset \Fil^{-1}\bm{L}_{\dR} = \bm{L}_{\dR}
\]
by local direct summands. The first step $\Fil^1\bm{L}_{\dR}$ is isotropic of rank $1$ and governs the deformation theory of $\mathcal{S}(L)_{(p)}$. More precisely, let $s\to \mathcal{S}(L)_{(p)}$ be a closed point valued in a perfect field $k$, and let $\widehat{U}_s$ be the formal scheme associated with the complete local ring of $\mathcal{S}(L)_{(p)}$ at $s$. Suppose also that we have a special endomorphism $f\in L(s)$. Then it has a de Rham realization $\bm{f}_{\dR}\in \Fil^0\bm{L}_{\dR,s}$. Let $\widehat{U}_{(s,f)}\to \widehat{U}_s$ be the relative deformation space parameterizing deformations of the pair $(s,f)$.

\begin{proposition}
\label{prop:def theory special end}
Let $k[\epsilon] = k[x]/(x^2)$ be the ring of dual numbers over $k$. Then there is a canonical identification
\[
\widehat{U}_s(k[\epsilon]) = \left\{\text{Isotropic lines $F\subset \bm{L}_{\dR,s}\otimes k[\epsilon]$ lifting $\Fil^1\bm{L}_{\dR,s}$}\right\},
\]
which in turn identifies
\[
\widehat{U}_{(s,f)}(k[\epsilon]) = \left\{F\in\widehat{U}_s(k[\epsilon]):\;\text{$F$ is orthogonal to $\bm{f}_{\dR}$} \right\}
\]
\end{proposition}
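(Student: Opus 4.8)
The plan is to extract both identifications from the crystalline deformation theory of the Kuga-Satake abelian scheme $A^{\mathrm{KS}}$, exactly as in~\cite[\S 5]{mp:tatek3}; the genuinely new ingredient is the existence of the integral canonical model $\mathcal{S}(L)_{(p)}$ for $p=2$ (Theorem~\ref{theorem:canonical models}), after which the only deformation-theoretic input needed is Grothendieck--Messing theory over the square-zero base $k[\epsilon]$ (whose augmentation ideal is nilpotent and carries canonical divided powers), valid in every residue characteristic.

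\emph{The tangent space $\widehat{U}_s(k[\epsilon])$.} \'Etale locally around $s$ the stack $\mathcal{S}(L)_{(p)}$ agrees with the Hodge-type integral canonical model of $\widetilde{\Sh}(L)$, so by Proposition~\ref{prop:local model} and its proof the complete local ring $\co_s$ is the $G$-adapted deformation ring $R^{\mathrm{univ}}_G$ of $\mathcal{G}_s = A^{\mathrm{KS}}_s[p^\infty]$ from Proposition~\ref{prop:adapted}, with $G = \GSpin(\bm{L}_{\dR,s})$. Grothendieck--Messing theory then presents a $k[\epsilon]$-point of $\widehat{U}_s$ as a lift of the Hodge filtration of the Dieudonn\'e module $\mathbb{D}(\mathcal{G}_s)(k)$ inside $\mathbb{D}(\mathcal{G}_s)(k)\otimes_k k[\epsilon]$, and $G$-adaptedness singles out those lifts split by a cocharacter of $G_{k[\epsilon]}$ deforming $\mu_0$. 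Through the Clifford-algebra dictionary relating the Hodge filtration on the Kuga-Satake side to the filtration $\Fil^1\bm{L}_{\dR,s}\subset\Fil^0\bm{L}_{\dR,s}\subset\bm{L}_{\dR,s}$ (as in~\cite[\S 5]{mp:tatek3}), this becomes a lift $F$ of $\Fil^1\bm{L}_{\dR,s}$ split by a cocharacter of $\SO(\bm{L}_{\dR,s})_{k[\epsilon]}$ deforming $\mu_0$; and since $\mu_0$ is minuscule such lifts are precisely the isotropic lines $F\subset\bm{L}_{\dR,s}\otimes k[\epsilon]$ lifting $\Fil^1\bm{L}_{\dR,s}$ --- given such an $F$ the flag $F\subset F^\perp$ is stabilized by a parabolic split by the associated cocharacter, and conversely. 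This yields the first displayed identification, canonically.

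\emph{The locus $\widehat{U}_{(s,f)}(k[\epsilon])$.} Fix $s_\epsilon\in\widehat{U}_s(k[\epsilon])$, with associated isotropic lift $F$ and deformation $\mathcal{G}_{s_\epsilon}$ of $\mathcal{G}_s$. On the trivial divided-power thickening $\bm{L}_{\cris,s}$ is the de Rham realization $\bm{L}_{\dR,s}$, and since $\bm{L}_{\cris}$ is a crystal and $\Spec k\hookrightarrow\Spec k[\epsilon]$ is a divided-power thickening, the crystalline realization $\bm{f}_{\cris}=\bm{f}_{\dR}$ of $f$ transports to the \emph{fixed} element $\bm{f}_{\dR}\otimes 1$ of $\bm{L}_{\cris,s}\otimes_k k[\epsilon]$; the only datum varying with $s_\epsilon$ is the Hodge filtration on $\bm{L}_{\dR,s}\otimes k[\epsilon]$, whose degree-$1$ step is $F$ and whose degree-$0$ step is therefore $F^\perp$, the filtration being self-orthogonal for $\bm{Q}$. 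Every endomorphism of $A^{\mathrm{KS}}_{s_\epsilon}$ lifting $f$ is automatically special (the special condition is imposed only at geometric points, and $s$ is the only one), so by Grothendieck--Messing applied to $A^{\mathrm{KS}}$, $f$ extends to a special endomorphism over $k[\epsilon]$ iff its Clifford action on the Dieudonn\'e module of $\mathcal{G}_{s_\epsilon}$ preserves $\Fil^1$ there; and the elementary Clifford-algebra computation of~\cite[\S 5]{mp:tatek3} shows this holds exactly when $\bm{f}_{\dR}\otimes 1$ lies in the degree-$0$ step $F^\perp$. The lift is unique when it exists, so $\widehat{U}_{(s,f)}(k[\epsilon]) = \{s_\epsilon:\bm{f}_{\dR}\otimes 1\in F^\perp\}$. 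Finally, writing $F = k[\epsilon]\cdot(v+\epsilon w)$ with $v$ a generator of $\Fil^1\bm{L}_{\dR,s}$, and using that $\bm{f}_{\dR}\in\Fil^0\bm{L}_{\dR,s} = (\Fil^1\bm{L}_{\dR,s})^\perp$ already pairs trivially with $v$, the condition $b(\bm{f}_{\dR},v+\epsilon w)=0$ (with $b$ the bilinear form attached to $\bm{Q}$) collapses to $b(\bm{f}_{\dR},w)=0$, which is exactly the orthogonality of $F$ to $\bm{f}_{\dR}$.

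The step requiring the most care is the crystalline criterion for a special endomorphism to deform, together with its Clifford-algebra translation into an orthogonality condition; both are carried out in~\cite[\S 5]{mp:tatek3} for $p>2$, and for $p=2$ nothing changes --- the only conceivable subtlety, the validity of Grothendieck--Messing over $k[\epsilon]$, is a non-issue because the augmentation ideal is square-zero. Over ramified test rings, which are not needed for this infinitesimal statement, one would instead invoke Theorem~\ref{thm:kisin_lau_p_divisible}. The remainder is the linear algebra of $\bm{Q}$ recorded above.
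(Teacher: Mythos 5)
Your argument is correct and is essentially the argument the paper relies on: the paper's proof is just a pointer to Proposition 5.16 of~\cite{mp:reg}, whose proof is precisely the combination you give — Grothendieck--Messing over the square-zero (hence nilpotent-PD, so characteristic-free, including $p=2$) thickening $k[\epsilon]$, the identification of the adapted deformation ring via the Faltings construction, and the Clifford-algebra translation of "the special endomorphism deforms" into orthogonality of $\bm{f}_{\dR}$ with the lifted isotropic line. You have in effect supplied the details that the paper outsources to that citation, together with the (correct) observation that nothing in them uses $p>2$.
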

\begin{proof}
See Proposition 5.16 of~\cite{mp:reg}.
\end{proof}

\subsection{}\label{subsec:Z Lambda}
Fix a positive definite quadratic lattice $\Lambda$ over $\Z$. Then we have an associated stack $\mathcal{Z}(\Lambda)_{(p)}$, finite over $\mathcal{S}(L)_{(p)}$, and parameterizing, for each $T\to \mathcal{S}(L)_{(p)}$, isometric embeddings $\Lambda\hookrightarrow L(T)$; see Proposition 6.13 of~\cite{mp:reg}.

If $\Lambda$ has rank $1$, then it is isometric to the lattice $\langle m\rangle$ generated by a basis element $e$ satisfying $e^2 = m$, for some positive integer $m\in \Z_{>0}$. In this case, we will write $\mathcal{Z}(m)_{(p)}$ for the associated stack: It parameterizes elements $f\in L(T)$ such that $f\circ f = m\in \bm{E}(T)$. In particular, over $\mathcal{Z}(m)_{(p)}$, we have the universal such endomorphism $\bm{f}\in L(\mathcal{Z}(m)_{(p)})$, along with its de Rham realization
\[
\bm{f}_{\dR}\in H^0(\mathcal{Z}(m)_{(p)},\bm{L}_{\dR}).
\]

Let $\mathcal{Z}^{\mathrm{pr}}(m)_{(p)}\subset \mathcal{Z}(m)_{(p)}$ be the open locus where $\bm{f}_{\dR}$ generates a local direct summand of $\bm{L}_{\dR}$.

The next result records some facts about the above stacks, all of which were shown under the assumption $p\neq 2$ in~\cite{mp:reg}, but whose proofs continue to be valid even without this restriction.

\begin{proposition}
\mbox{}
\label{prop:Z Lambda}
\begin{enumerate}
    \item\label{zlambda:generic}Let $\mathcal{Z}^{\mathrm{sm}}(\Lambda)_{\F_p}\subset \mathcal{Z}(\Lambda)_{(p),\F_p}$ be the complement of the non-smooth locus. Suppose that 
   \[
     \mathrm{rank}(\Lambda) < \mathrm{rank}(L) - 2.
   \]
	Then, for any geometric generic point $\overline{\eta}\to \mathcal{Z}^{\mathrm{sm}}(\Lambda)_{\F_p}$, the tautological map
	\[
    \Lambda \to L(\overline{\eta})
	\]
	is an isomorphism.
	\item\label{zlambda:divisor}For any integer $m\in \Z_{>0}$, the stack $\mathcal{Z}(m)_{(p)}$ is flat over $\Z_{(p)}$. 
	\item\label{zlambda:primitive}Suppose that 
	\[
    \mathrm{rank}(L)\geq 3.
	\]
	Then $\mathcal{Z}^{\mathrm{pr}}(m)_{(p)}$ is normal.
\end{enumerate}
\end{proposition}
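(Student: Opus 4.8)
The plan is to go through the proofs of these three statements in~\cite{mp:reg} item by item and verify that the hypothesis $p\neq 2$ there enters only through inputs that we have now established in all residue characteristics: the existence and regularity of the integral canonical model $\mathcal{S}(L)_{(p)}$ (Theorem~\ref{theorem:canonical models}), the crystalline realization functor and the attendant notion of special endomorphism in characteristic $p$ (Proposition~\ref{prop:crystalline_realization}), and the first-order deformation theory of a pair (closed point, special endomorphism) recorded in Proposition~\ref{prop:def theory special end}. Granting these, no further restriction on $p$ appears and the arguments of \emph{loc.\ cit.} apply verbatim; below I indicate what those arguments are and where the characteristic-$p$ sensitivity is localized.

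For the flatness of $\mathcal{Z}(m)_{(p)}$ over $\Z_{(p)}$: the point is that, Zariski-locally on $\mathcal{S}(L)_{(p)}$, the stack $\mathcal{Z}(m)_{(p)}$ is an effective Cartier divisor. Indeed, by Proposition~\ref{prop:def theory special end} the complete local ring of $\mathcal{Z}(m)_{(p)}$ at a closed point $(s,f)$ is the quotient of the regular local ring $\co_{\mathcal{S}(L)_{(p)},s}$ by the single equation expressing orthogonality of the universal isotropic line to $\bm{f}_{\dR}$, and this equation is nonzero because $\bm{f}_{\dR}$ is a nonzero section of $\bm{L}_{\dR}$ (as $\bm{f}\circ\bm{f}=m$ with $m>0$). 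Since $\mathcal{S}(L)_{(p)}$ is regular, hence Cohen--Macaulay, this cutting element is a nonzerodivisor, so $\mathcal{Z}(m)_{(p)}$ is Cohen--Macaulay and equidimensional; flatness over $\Z_{(p)}$ then follows because this element is not divisible by $p$ --- equivalently $\mathcal{Z}(m)_{\F_p}$ is again a proper Cartier divisor in $\mathcal{S}(L)_{\F_p}$ --- which one reads off the same deformation description, since $\bm{f}_{\dR}$ remains a nonzero section modulo $p$ along each component. None of this uses $p\neq 2$.

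For the normality of $\mathcal{Z}^{\mathrm{pr}}(m)_{(p)}$ when $\mathrm{rank}(L)\geq 3$, I would invoke Serre's criterion $(R_1)+(S_2)$. Property $(S_2)$ is immediate from the Cohen--Macaulayness just established (a Cartier divisor in a regular scheme). For $(R_1)$ one uses Proposition~\ref{prop:def theory special end} once more: on the primitive locus, where $\bm{f}_{\dR}$ spans a local direct summand of $\bm{L}_{\dR}$, the complete local ring at a point is identified with the completed local ring of an explicit quadric hypersurface over $\Z_{(p)}$ --- the quadric of isotropic lines in the orthogonal complement of $\bm{f}_{\dR}$, of rank $\mathrm{rank}(L)-1$ --- and the bound $\mathrm{rank}(L)\geq 3$ forces the singular locus of this quadric, including that of its special fibre where the form may degenerate, to have codimension $\geq 2$. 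This is precisely the local computation of~\cite{mp:reg}, whose only dependence on the characteristic is absorbed into Proposition~\ref{prop:def theory special end}. Finally, for the genericity statement, the tautological map $\Lambda\to L(\overline{\eta})$ is isometric and injective by construction, so one must rule out extra special endomorphisms at a geometric generic point of a smooth component of $\mathcal{Z}(\Lambda)_{\F_p}$. If such a point factored through $\mathcal{Z}(\Lambda')_{(p)}$ for a strictly larger lattice $\Lambda'\supsetneq\Lambda$, then --- each additional special endomorphism imposing one further nontrivial condition, the nontriviality being guaranteed by the hypothesis $\mathrm{rank}(\Lambda)<\mathrm{rank}(L)-2$, which keeps the ambient deformation spaces positive-dimensional --- the closure of $\mathcal{Z}(\Lambda')_{\F_p}$ would be a proper closed subset of $\mathcal{Z}(\Lambda)_{\F_p}$ of strictly smaller dimension, and the countably many possibilities for $\Lambda'$ cannot cover a generic point; this is the argument of~\cite[\S 4]{mp:reg} (compare the analogous assertion in~\cite{mp:tatek3}).

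The main obstacle I anticipate is not any new idea but a careful audit: one must track through the cited proofs of~\cite{mp:reg} every implicit appeal to $p\neq 2$ --- notably in the construction of the crystalline special-endomorphism sheaf $\bm{E}\to\underline{\End}(\bm{H}_{\cris})$ in characteristic $p$ and in the comparison between the crystalline and de Rham realizations of a special endomorphism --- and confirm that each such appeal factors through Theorem~\ref{theorem:canonical models}, Proposition~\ref{prop:crystalline_realization} and Proposition~\ref{prop:def theory special end}, all of which now hold uniformly in $p$. Once that verification is complete, the three assertions of Proposition~\ref{prop:Z Lambda} follow exactly as in the case $p\neq 2$.
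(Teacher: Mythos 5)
Your proposal matches the paper's proof in substance: the paper simply cites Proposition 5.21, Corollary 5.19 and Proposition 6.17 of~\cite{mp:reg} and observes that those arguments go through without the hypothesis $p\neq 2$, exactly the audit you describe, with the characteristic-sensitivity absorbed into Theorem~\ref{theorem:canonical models}, Proposition~\ref{prop:crystalline_realization} and Proposition~\ref{prop:def theory special end}. Your additional sketches of the divisor/flatness, Serre-criterion and dimension-count arguments are consistent with the cited proofs, so the approach is essentially the same.
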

\begin{proof}
The first assertion is shown as in Proposition 5.21, the second as in Corollary 5.19, and the third as in Proposition 6.17.
\end{proof}

\subsection{}
In general, for each lattice $L$ as above, we have a unique \emph{normal} integral model $\mathcal{S}(L)$ for $\Sh(L)$ over $\Z$, characterized by the following properties:

\begin{itemize}
	\item If $L_{(p)}$ is self-dual for a prime $p$, then 
	\[
     \mathcal{S}(L)_{\Z_{(p)}} = \mathcal{S}(L)_{(p)}
	\]
	is the integral canonical model for $\Sh(L)$ over $\Z_{(p)}$.
	\item If $L\hookrightarrow L^\diamond$ is as in~\eqref{subsec:diamond}, then the map $\Sh(L)\to \Sh(L^\diamond)$ extends to a finite map $\mathcal{S}(L)\to \mathcal{S}(L^\diamond)$ of $\Z$-stacks.
\end{itemize}

Moreover, for every $T\to \mathcal{S}(L)$, we have a functorially associated space of special endomorphisms $L(T)$ characterized by the following properties:
\begin{itemize}
	\item If $T$ factors through $\Sh(L)$, then $L(T)$ agrees with the space defined in~\eqref{subsec:motives char 0}.
	\item If $L_{(p)}$ is self-dual and $T$ factors through $\mathcal{S}(L)_{(p)}$, then $L(T)$ agrees with the space defined in~\eqref{subsec:self-dual}.
	\item Suppose that $L\hookrightarrow L^\diamond$ and $\Lambda$ are as in~\eqref{subsec:diamond}, and that $L^\diamond(T)$ is the space of special endomorphisms over $T$ viewed as a scheme over $\mathcal{S}(L^\diamond)$. Then there is an isometric embedding
	\[
     \Lambda\hookrightarrow L^\diamond(\mathcal{S}(L)),
	\]
	and a canonical identification
	\[
      L(T) = \Lambda^\perp\subset L^\diamond(T).
	\]
\end{itemize}

For more details on all this, see \S 4 of~\cite{aghmp:colmez}.

\subsection{}
For every prime $\ell$, the $\ell$-adic realization $\bm{L}_{\ell}$ over $\Sh(L)$ extends to an $\ell$-adic sheaf over $\mathcal{S}(L)[\ell^{-1}]$, which we will denote by the same symbol. The $\ell$-adic realizations of any special endomorphism are sections of $\bm{L}_{\ell}$. 

Let $s\to \mathcal{S}(L)$ be a point defined over a field $k(s)$. Fix a separable closure $k(s)^{\mathrm{sep}}$ for $k(s)$, and let $\Gamma_s = \Gal(k(s)^{\mathrm{sep}}/k(s))$ be the associated absolute Galois group. Let $s^{\mathrm{sep}}\to \mathcal{S}(L)$ be the induced $k(s)^{\mathrm{sep}}$-valued point. Then the fiber $\bm{L}_{\ell,s^{\mathrm{sep}}}$ of $\bm{L}_{\ell}$ is a $\Gamma_s$-representation.

For every prime $\ell$ prime to the characteristic of $k(s)$, we have an $\ell$-adic realization map
\begin{equation}\label{eqn:ell adic realization}
L(s)\otimes\Q_{\ell}\to \bm{L}_{\ell,s^{\mathrm{sep}}}^{\Gamma_s}.
\end{equation}

\begin{theorem}\label{thm:tate special}
Suppose that $k(s)$ is finitely generated over its prime field. Then the map~\eqref{eqn:ell adic realization} is an isomorphism.
\end{theorem}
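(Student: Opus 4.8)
The plan is to retrace the proof of this result from~\cite{mp:tatek3}, where it was established under the standing hypothesis $p\neq 2$, and to check that the only points at which that hypothesis entered there are now settled without restriction on $p$ by the earlier sections of this note. For $k(s)$ of characteristic zero the assertion is already contained in~\cite{mp:tatek3} and is unaffected here; so assume $k(s)$ has characteristic $p>0$, and (since the case $p\neq 2$ is likewise already known) the new content is $p=2$, though the argument below is uniform in $p$. In \emph{loc.~cit.} the assumption $p\neq 2$ was used solely through: (i) the existence of the smooth integral canonical model $\mathcal{S}(L)_{(p)}$; (ii) the crystalline realization $\bm{L}_{\cris}$, its $F$-crystal structure, and its inclusion $\bm{L}_{\cris}\hookrightarrow\underline{\End}(\bm{H}_{\cris})$; and (iii) the divisorial deformation theory of special endomorphisms together with the geometry of the cycle stacks $\mathcal{Z}(\Lambda)_{(p)}$. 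These are supplied in full generality by Theorem~\ref{theorem:canonical models}, Proposition~\ref{prop:crystalline_realization}, Proposition~\ref{prop:def theory special end} and Proposition~\ref{prop:Z Lambda}; once they are available, the arguments of~\cite[\S\S 5--6]{mp:tatek3} and~\cite{mp:reg} apply verbatim, and the remaining paragraphs merely isolate the shape of the proof and the one step that carries the content.

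Injectivity of~\eqref{eqn:ell adic realization} is immediate, since a special endomorphism lies in $\bm{E}(s)=\End(A^{\mathrm{KS}}_s)$ and the $\ell$-adic realization functor is faithful on abelian varieties over an arbitrary field. For surjectivity one first reduces to the case that $k(s)$ is a finite field. This reduction does not see the residue characteristic: spreading $s$ out over a scheme of finite type over $\F_p$ and combining the semisimplicity of the Galois action on $\bm{L}_\ell$ with a Chebotarev argument, one produces a closed point at which the space of Tate classes is no larger than---hence, once the finite-field case is known, coincides with---the one at the generic point, and then propagates special endomorphisms back to the generic point using the flatness, normality and generic structure of the $\mathcal{Z}(\Lambda)_{(p)}$ recorded in Proposition~\ref{prop:Z Lambda}. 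This is the globalization argument of~\cite[\S 6]{mp:tatek3}; nothing in it depends on $p$, and the auxiliary embeddings $L\hookrightarrow L^\diamond$ of~\eqref{subsec:diamond} are used exactly as there.

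It remains to treat $k(s)=\F_q$. By Tate's theorem for the Kuga--Satake abelian variety $A^{\mathrm{KS}}_s$ over the finite field $\F_q$, together with the semisimplicity of Frobenius, every class $v\in\bm{L}_{\ell,s^{\mathrm{sep}}}^{\Gamma_s}\subset\End(\bm{H}_{\ell,s^{\mathrm{sep}}})^{\Gamma_s}$ is the $\ell$-adic realization of some $f\in\bm{E}(s)\otimes\Q$; the essential point is to promote $f$ to an element of $L(s)\otimes\Q$, i.e.\ to verify that its crystalline realization lies in $\bm{L}_{\cris,s}\subset\End(\bm{H}_{\cris,s})$. Here one uses, as in~\cite{mp:reg}, that $\bm{L}\hookrightarrow\underline{\End}(\bm{H})$ is cut out, compatibly in all realizations, by projectors arising from the Clifford module structure on $\bm{H}$, which are themselves induced by genuine endomorphisms of $A^{\mathrm{KS}}$; consequently whether the quasi-endomorphism $f$ is special---a single linear condition on an element of the semisimple algebra $\bm{E}(s)\otimes\Q$---can be detected on any one faithful realization. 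Detecting it $\ell$-adically gives $f_\ell=v\in\bm{L}_{\ell,s}$, whence it also holds after base change to $W[p^{-1}]$: for this one invokes the crystalline analogue of Tate's theorem for abelian varieties over finite fields, identifying $\bm{E}(s)\otimes W[p^{-1}]$ with the $\varphi$-commuting endomorphisms of $\bm{H}_{\cris,s}$, and one uses that $\bm{L}_{\cris}$ is defined even at $p=2$ by Proposition~\ref{prop:crystalline_realization}. Hence $f_{\cris}\in\bm{L}_{\cris,s}$, so $f\in L(s)\otimes\Q$, and~\eqref{eqn:ell adic realization} is surjective.

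I expect the main obstacle to be precisely this finite-field step---matching an $\ell$-adic Tate class on the Kuga--Satake abelian variety with a genuine special endomorphism by passing through the crystalline realization---together with the structural fact, underlying both it and the globalization and deformation-theoretic arguments, that the integral model $\mathcal{S}(L)_{(p)}$, the $F$-crystal $\bm{L}_{\cris}$ and the divisorial deformation theory of special endomorphisms are now available even when $p=2$. With these in hand, no argument beyond transcribing~\cite{mp:tatek3} and~\cite{mp:reg} is required.
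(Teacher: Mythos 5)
Your overall strategy---rerun the proof from~\cite{mp:tatek3} now that the integral canonical model, the crystalline realization $\bm{L}_{\cris}$, and the deformation theory of special endomorphisms are available at $p=2$---is exactly the paper's, and your reduction to finite fields and the characteristic-zero case are unobjectionable. The problem is the finite-field step, which you yourself single out as the step carrying the content. Tate's theorem does produce, from a Frobenius-invariant class $v\in\bm{L}_{\ell,s^{\mathrm{sep}}}^{\Gamma_s}\subset\End(\bm{H}_{\ell,s^{\mathrm{sep}}})^{\Gamma_s}$, a quasi-endomorphism $f$ of the Kuga--Satake abelian variety with $f_\ell=v$; but your claim that specialness of $f$ ``can be detected on any one faithful realization'' because $\bm{L}\hookrightarrow\underline{\End}(\bm{H})$ is ``cut out, compatibly in all realizations, by projectors \ldots induced by genuine endomorphisms of $A^{\mathrm{KS}}$'' is unsubstantiated, and it is precisely the crux of the whole theorem. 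The subspace $\bm{L}\subset\underline{\End}(\bm{H})$ is not carved out by algebra-theoretic conditions on $\End(A^{\mathrm{KS}}_s)$ that transfer formally from the $\ell$-adic to the crystalline realization; showing that $f_{\cris}$ lands in $\bm{L}_{\cris,s}$ is exactly the hard point, and if it followed formally as you assert, the elaborate argument of~\cite[\S 6]{mp:tatek3} would be unnecessary.

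Relatedly, you never engage with the $\ell$-independence input. The argument the paper actually invokes---the reductions of Corollary 6.11 and Lemma 6.6 of~\cite{mp:tatek3} to the case $k(s)$ finite, $L_{(p)}$ self-dual and $L(s)\neq 0$, followed by the proof of~\cite[Theorem 6.4]{mp:tatek3}---does not proceed by your ``detect on one realization'' shortcut, but by comparing the dimensions of the spaces of Tate classes for all $\ell$ (including the crystalline one) and lifting special endomorphisms to characteristic zero via the special divisors, where Faltings' theorem applies. The $\ell$-independence needed there was only an assumption in~\cite{mp:tatek3} (Assumption 6.2 of \emph{loc.~cit.}) and is now known unconditionally by Kisin~\cite[Corollary 2.3.2]{kisin:lr}; the paper flags this explicitly, while your proposal omits it. So although your framing (``only the $p=2$ inputs were missing'') matches the paper, the one step you attempt to argue in detail has a genuine gap and would not go through as written.
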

\begin{proof}
As in the proof of Corollary 6.11 of~\cite{mp:tatek3}, we can reduce to the case where $k(s)$ is finite over its prime field. When $k(s)$ is in characteristic $0$, then the result is an immediate consequence of Faltings's isogeny theorem for abelian varieties over number fields. When $k(s)$ is in characteristic $p>0$, then as in Lemma 6.6 of~\cite{mp:tatek3}, we can reduce to the case where $L_{(p)}$ is self-dual and where $L(s)\neq 0$, where the argument used to prove~\cite[Theorem 6.4]{mp:tatek3} works also without the hypothesis $p>2$ to give us the theorem.

Note that in~\cite{mp:tatek3} we required a certain $\ell$-independence condition, which was formulated as Assumption 6.2 of \emph{loc. cit.} However, this condition has been shown to always hold by Kisin~\cite[Corollary 2.3.2]{kisin:lr}.
\end{proof}

\subsection{}
Fix an integer $d>0$, and let $\mathsf{M}_{2d}$ be the moduli stack over $\Z$ of primitively quasi-polarized K3 surfaces of degree $2d$; see~\cite[\S 3]{mp:tatek3}. 

Let $(\bm{\mathcal{X}},\bm{\xi})\to \mathsf{M}_{2d}$ be the universal quasi-polarized K3 surface. For each prime $\ell$, we have its $\ell$-adic primitive cohomology sheaf $\bm{P}^2_{\ell}$ over $\mathsf{M}_{2d}[\ell^{-1}]$. For each prime $p$, we also have the associated $F$-crystal $\bm{P}^2_{\cris}$ over $\mathsf{M}_{2d,\F_p}$. Finally, we have the filtered vector bundle $\bm{P}^2_{\dR}$ over $\mathsf{M}_{2d}$ obtained from the primitive relative de Rham cohomology of $\mathcal{\bm{X}}$.

We also have the full de Rham cohomology $\bm{H}^2_{\dR}$ of $\bm{\mathcal{X}}\to \mathsf{M}_{2d}$. The canonical Poincar\'e pairing on it is induced from a quadratic form
\[
\bm{Q}:\bm{H}^2_{\dR}\to \Reg{\mathsf{M}_{2d}}.
\]
This is obvious when $2$ is invertible, and follows from Theorem 4.7 of~\cite{Ogus1983-xq} in general. 

\subsection{}\label{subsec:k3 deform}
There is a three step filtration
\[
0\subset \Fil^2\bm{H}^2_{\dR}\subset \Fil^1\bm{H}^2_{\dR}\subset \Fil^0\bm{H}^2_{\dR} = \bm{H}^2_{\dR}
\]
by local direct summands, where $\Fil^2\bm{H}^2_{\dR}$ is isotropic of rank $1$, and governs the deformation theory of $\mathsf{M}_{2d}$. 

More precisely, if $\mathrm{ch}_{\dR}(\bm{\xi})\in H^0(\mathsf{M}_{2d},\Fil^1\bm{H}^2_{\dR})$ is the de Rham Chern class of $\bm{\xi}$, we have 
\[
\Fil^2\bm{H}^2_{\dR}\subset \langle \mathrm{ch}_{\dR}(\bm{\xi})\rangle^{\perp} = \bm{P}^2_{\dR}\subset \bm{H}^2_{\dR}.
\]
Moreover, if $s\to \mathsf{M}_{2d}$ is a point valued in a perfect field $k$, then the set of lifts of $s$ to a $k[\epsilon]$-valued point of $\mathsf{M}_{2d}$ is canonically identified with the set of isotropic lines $F\subset\bm{P}^2_{\dR,s}\otimes k[\epsilon]$ that lift $\Fil^2\bm{H}^2_{\dR,s}$.

\subsection{}
As in~\cite[\S 3.10]{mp:tatek3}, let $N$ be the self-dual lattice $U^{\oplus 3} \oplus E_8^{\oplus 2}$, where $U$ is the hyperbolic plane. Choose a hyperbolic basis $e,f$ for the first copy of $U$, and set
\[
L_d = \langle e-df\rangle^{\perp}\subset N.
\]

There exists a canonical $2$-fold \'etale cover $\tilde{\mathsf{M}}_{2d}\to \mathsf{M}_{2d}$, whose restriction to $\mathsf{M}_{2d}[\ell^{-1}]$ parameterizes isometric trivializations of the rank $1$ sheaf $\det(L_d)\xrightarrow{\simeq}\Z_\ell\xrightarrow{\simeq}\underline{\det}(\bm{P}^2_{\ell})$.

There is now a canonical period map (see Corollary 5.4 of~\cite{mp:tatek3})
\[
\iota^{\mathrm{KS}}_{\Q}:\; \tilde{\mathsf{M}}_{2d,\Q}\to \Sh(L_d).
\]
Moreover, for $? = \dR,\ell$, we have a canonical isometry (see Proposition 4.6 of~\cite{mp:tatek3}):
\[
\alpha_?:\bm{L}_{?}(-1)\vert_{\tilde{\mathsf{M}}_{2d,\Q}}\xrightarrow{\simeq}\bm{P}^2_?\vert_{\tilde{\mathsf{M}}_{2d,\Q}}.
\]

\begin{proposition}
\label{prop:integral kuga satake}
Let $\tilde{\mathsf{M}}^{\mathrm{sm}}_{2d}\subset \tilde{\mathsf{M}}_{2d}$ be the complement of the non-smooth loci in all its special fibers. Then $\iota^{\mathrm{KS}}_{\Q}$ extends to an \'etale morphism
\[
\iota^{\mathrm{KS}}:\;\tilde{\mathsf{M}}^{\mathrm{sm}}_{2d}\to \mathcal{S}(L_d).
\] 
\end{proposition}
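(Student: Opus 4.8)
The plan is to build $\iota^{\mathrm{KS}}$ one prime at a time and then check that it is étale by comparing deformation functors. By construction $\tilde{\mathsf{M}}^{\mathrm{sm}}_{2d}$ is the open substack over which $\tilde{\mathsf{M}}_{2d}$ is smooth over $\Z$; hence it is regular, flat over $\Z$, and formally smooth over $\Z_{(p)}$ for every $p$, of relative dimension $19 = \dim\Sh(L_d)$. Since $\mathcal{S}(L_d)$ is normal and flat over $\Z$ with generic fibre $\Sh(L_d)$, it is enough to produce, for each prime $p$, a morphism $\tilde{\mathsf{M}}^{\mathrm{sm}}_{2d,\Z_{(p)}}\to\mathcal{S}(L_d)_{\Z_{(p)}}$ extending $\iota^{\mathrm{KS}}_\Q$; these automatically agree over $\Q$ and glue.

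For $p\nmid 2d$ the lattice $L_{d,(p)}$ is self-dual, so by~\eqref{subsec:self-dual} and Theorem~\ref{theorem:canonical models} the base change $\mathcal{S}(L_d)_{\Z_{(p)}}$ is the integral canonical model of $\Sh(L_d)$; its extension property, applied to the regular formally smooth $\Z_{(p)}$-stack $\tilde{\mathsf{M}}^{\mathrm{sm}}_{2d,\Z_{(p)}}$, furnishes the (unique) extension. For $p\mid 2d$---in particular for $p=2$, which is always of this type since $L_d=\langle e-df\rangle^\perp$ has discriminant $2d$---choose, as in~\cite[\S 5]{mp:tatek3}, a quadratic lattice $L^\diamond$ of signature $(n^\diamond,2)$ with $L^\diamond_{(p)}$ self-dual, together with an isometric embedding $L_d\hookrightarrow L^\diamond$ as in~\eqref{subsec:diamond} with orthogonal complement $\Lambda$. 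Applying the previous case to $L^\diamond$---this is precisely where Theorem~\ref{theorem:canonical models} of the present paper enters at $p=2$---extends $\iota^{\mathrm{KS}}_\Q$ composed with $\Sh(L_d)\to\Sh(L^\diamond)$ to a morphism $f\colon\tilde{\mathsf{M}}^{\mathrm{sm}}_{2d,\Z_{(p)}}\to\mathcal{S}(L^\diamond)_{\Z_{(p)}}$. As $\mathcal{S}(L_d)\to\mathcal{S}(L^\diamond)$ is finite with $\mathcal{S}(L_d)$ normal and flat over $\Z$, it is the normalization of $\mathcal{S}(L^\diamond)_{\Z_{(p)}}$ in $\Sh(L_d)$; since $\tilde{\mathsf{M}}^{\mathrm{sm}}_{2d,\Z_{(p)}}$ is normal and flat over $\Z_{(p)}$ and $f$ carries its generic fibre into $\Sh(L_d)$, the universal property of the normalization---applied to the finite $\tilde{\mathsf{M}}^{\mathrm{sm}}_{2d,\Z_{(p)}}$-stack $\tilde{\mathsf{M}}^{\mathrm{sm}}_{2d,\Z_{(p)}}\times_{\mathcal{S}(L^\diamond)}\mathcal{S}(L_d)$---shows that $f$ factors through $\mathcal{S}(L_d)_{\Z_{(p)}}$.

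To see that the resulting morphism $\iota^{\mathrm{KS}}$ is étale, I would compare deformation functors at points $s$ of its special fibres, valued in perfect fields $k$. On the source, by~\eqref{subsec:k3 deform} the lifts of $s$ over $k[\epsilon]$ are the isotropic lines in $\bm{P}^2_{\dR,s}\otimes k[\epsilon]$ lifting $\Fil^2\bm{H}^2_{\dR,s}$ (the finite étale cover $\tilde{\mathsf{M}}_{2d}\to\mathsf{M}_{2d}$ does not affect this). On the target, the image of $\iota^{\mathrm{KS}}$ lies in the smooth locus of $\mathcal{S}(L_d)_{\Z_{(p)}}$: for $p\nmid 2d$ this is everything, while for $p\mid 2d$ one realizes $\mathcal{S}(L_d)_{\Z_{(p)}}$ inside the smooth stack $\mathcal{S}(L^\diamond)_{\Z_{(p)}}$ as the normalization of the special-endomorphism locus $\mathcal{Z}(\Lambda)_{(p)}$, whose smooth part is open and, by Proposition~\ref{prop:Z Lambda}\eqref{zlambda:generic} and~\eqref{zlambda:primitive} (valid at $p=2$) together with the rank inequality $\mathrm{rank}(\Lambda)<\mathrm{rank}(L^\diamond)-2$, contains the image of $\iota^{\mathrm{KS}}$, exactly as in~\cite[\S 5]{mp:tatek3}. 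On this smooth locus, Proposition~\ref{prop:def theory special end} (and its relative version for $\mathcal{Z}(\Lambda)$ when $p\mid 2d$, using $\bm{L}_{\dR}=\Lambda_{\dR}^\perp$ inside $\bm{L}^\diamond_{\dR}$) identifies the lifts of $s$ over $k[\epsilon]$ to $\mathcal{S}(L_d)$ with the isotropic lines in $\bm{L}_{\dR,s}\otimes k[\epsilon]$ lifting $\Fil^1\bm{L}_{\dR,s}$. Finally the comparison isometry $\alpha_{\dR}$ extends over $\tilde{\mathsf{M}}^{\mathrm{sm}}_{2d}$ to an isomorphism $(\iota^{\mathrm{KS}})^*\bm{L}_{\dR}(-1)\xrightarrow{\simeq}\bm{P}^2_{\dR}$ of filtered vector bundles with connection---using the integral quadratic form on $\bm{H}^2_{\dR}$ supplied by~\cite[Theorem 4.7]{Ogus1983-xq} and the argument of~\cite[\S 4]{mp:tatek3}---and carries $\Fil^1\bm{L}_{\dR}$ onto $\Fil^2\bm{H}^2_{\dR}$ (note $\Fil^2(\bm{L}_{\dR}(-1))=\Fil^1\bm{L}_{\dR}$). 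Hence $\iota^{\mathrm{KS}}$ induces an isomorphism on these deformation functors at every $s$, so on complete local rings, and is therefore étale.

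The part genuinely needing care is the characteristic-$2$ locus: there $L_{d,(2)}$ is never self-dual, so $\mathcal{S}(L_d)_{\Z_{(2)}}$ is a priori only normal, and the construction of $f$, the identification of its smooth locus, and the integral comparison isometry all rely on ingredients unavailable in~\cite{mp:tatek3}---namely the existence of the integral canonical model $\mathcal{S}(L^\diamond)_{\Z_{(2)}}$ (Theorem~\ref{theorem:canonical models}), the validity of Propositions~\ref{prop:def theory special end} and~\ref{prop:Z Lambda} at $p=2$, and the integral quadratic form on K3 cohomology. Once these are granted, every step above reduces to the corresponding step of~\cite[\S 5]{mp:tatek3}.
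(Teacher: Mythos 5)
Your proposal follows essentially the same route as the paper: extend the period map by embedding $L_d$ into a lattice $L^\diamond$ self-dual at $p$, use the extension property of the integral canonical model $\mathcal{S}(L^\diamond)_{(p)}$ from Theorem~\ref{theorem:canonical models} together with the normalization property to land in $\mathcal{S}(L_d)$, and deduce \'etaleness by matching deformation functors (Proposition~\ref{prop:def theory special end}, \eqref{subsec:k3 deform}, and the special-endomorphism stack $\mathcal{Z}^{\mathrm{pr}}(m)_{(p)}$) through the integral extension of $\alpha_{\dR}$. The only cosmetic differences are that you handle primes $p\nmid 2d$ directly while the paper runs the $L^\diamond$-embedding argument uniformly at every prime, and that the paper spells out that the extension of $\alpha_{\dR}$ is obtained by reducing to the ordinary locus as in Proposition 5.11 and Lemma 5.10 of~\cite{mp:tatek3}, which you cite only implicitly.
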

\begin{proof}
It is enough to prove the proposition over $\Z_{(p)}$ for a fixed prime $p$. Let $L_d\hookrightarrow L^\diamond$ be an isometric embedding as in~\eqref{subsec:diamond} with $L^\diamond_{(p)}$ a self-dual lattice. In fact, it is not hard to see, using the classification of quadratic spaces over $\Q$ using the Hasse principle, that we can choose $L^\diamond$ so that $L^\diamond_{\Z_p}$ is isometric to $N_{\Z_p}$, so that $\Lambda = L_d^\perp\subset L^\diamond$ has rank $1$.

Now, $\mathcal{S}(L_d)\to \mathcal{S}(L^\diamond)$ is the normalization of $\mathcal{S}(L^\diamond)$ in $\Sh(L_d)$, and so it is enough to show that the composition
\[
\tilde{\mathsf{M}}_{2d,\Q} \xrightarrow{\iota^{\mathrm{KS}}_{\Q}}\Sh(L_d)\to \Sh(L^\diamond)
\]
extends to a map $\tilde{\mathsf{M}}^{\mathrm{sm}}_{2d,\Z_{(p)}}\to \mathcal{S}(L^\diamond)_{{(p)}}$. This is shown just as in the proof of~\cite[Proposition 4.7]{mp:tatek3}, using the fact that $\mathcal{S}(L^\diamond)_{{(p)}}$ is an integral canonical model for its generic fiber.

It remains to show that the map is \'etale. Suppose that $\Lambda = \langle m\rangle$ for some integer $m\in \Z_{>0}$, and as in~\eqref{subsec:Z Lambda}, consider the finite morphism
\[
\mathcal{Z}(m)_{(p)}\to \mathcal{S}(L^\diamond)_{{(p)}}
\]
parameterizing $f\in L^\diamond(T)$ with $f\circ f = m\in \bm{E}(T)$. 

The canonical embedding $\Lambda\hookrightarrow L^\diamond(\mathcal{S}(L))$ determines a finite morphism
\[
\mathcal{S}(L)_{\Z_{(p)}} \to \mathcal{Z}(m)_{(p)},
\] 
which is an open and closed immersion in the generic fiber; see Lemma 7.1 of~\cite{mp:reg}. 

Now, it is enough to show that the induced map 
\begin{equation}\label{eqn:period to Z}
\tilde{\mathsf{M}}^{\mathrm{sm}}_{2d,\Z_{(p)}}\to \mathcal{Z}(m)_{(p)}
\end{equation}
is \'etale.


Arguing as in the proof of Lemma 6.16(4) of~\cite{mp:reg}, we first find that~\eqref{eqn:period to Z} factors through the open substack $\mathcal{Z}^{\mathrm{pr}}(m)_{(p)}$.

Over $\mathcal{Z}^{\mathrm{pr}}(m)_{(p)}$, the de Rham realization $\bm{f}_{\dR}$ of the tautological element $f\in L^\diamond(\mathcal{Z}(m)_{(p)})$ spans a local direct summand of $\bm{L}^\diamond_{\dR}\vert_{\mathcal{Z}^{\mathrm{pr}}(m)_{(p)}}$. Its orthogonal complement gives us a vector sub-bundle
\[
\bm{L}_{\dR} \coloneqq \langle \bm{f}_{\dR}\rangle^{\perp}\subset \bm{L}^\diamond_{\dR}\vert_{\mathcal{Z}^{\mathrm{pr}}(m)_{(p)}}
\]
over $\mathcal{Z}^{\mathrm{pr}}(m)_{(p)}$, whose restriction over $\Sh(L)$ is canonically identified with $\bm{L}_{\dR,\Q}$. Moreover, the isotropic line $\Fil^1\bm{L}^\diamond_{\dR}\subset \bm{L}^\diamond_{\dR}$ is orthogonal to $\bm{f}_{\dR}$ over $\mathcal{Z}(m)_{(p)}$, and so gives us a local direct summand
\[
\Fil^1\bm{L}_{\dR}\subset \bm{L}_{\dR}
\]
over $\mathcal{Z}^{\mathrm{pr}}(m)_{(p)}$.

To show that~\eqref{eqn:period to Z} is \'etale, it suffices, using our knowledge of the deformation theory of $\mathcal{Z}^{\mathrm{pr}}(m)_{(p)}$ (see Proposition~\ref{prop:def theory special end}), and that of $\mathsf{M}_{2d}$ (see~\eqref{subsec:k3 deform}), to show that the isometry $\alpha_{\dR}$ in the generic fiber extends to an isometry
\[
\bm{L}_{\dR}(-1)\vert_{\tilde{\mathsf{M}}^{\mathrm{sm}}_{2d,\Z_{(p)}}}\xrightarrow{\simeq}\bm{P}^2_{\dR}
\]
over $\tilde{\mathsf{M}}^{\mathrm{sm}}_{2d,\Z_{(p)}}$. See the proof of Theorem 5.8 of~\cite{mp:tatek3} for an explanation of this.

To show that $\alpha_{\dR}$ extends, it is enough to do so over the ordinary locus of $\tilde{\mathsf{M}}^{\mathrm{sm}}_{2d,\Z_{(p)}}$; see Proposition 5.11 of~\cite{mp:tatek3}. The extension over the ordinary locus is accomplished exactly as in Lemma 5.10 of the same article.
\end{proof}

\subsection{}
Since $\tilde{\mathsf{M}}^{\mathrm{sm}}_{2d}$ is normal, the isometry $\alpha_{\ell}$ over the generic fiber extends:
\[
\alpha_{\ell}:\bm{L}_{\ell}(-1)\vert_{\tilde{\mathsf{M}}^{\mathrm{sm}}_{2d}[\ell^{-1}]}\xrightarrow{\simeq}\bm{P}^2_\ell\vert_{\tilde{\mathsf{M}}^{\mathrm{sm}}_{2d}[\ell^{-1}]}.
\]

\begin{proposition}
\label{prop:special end to picard}
For every point $s\to \tilde{\mathsf{M}}^{\mathrm{sm}}_{2d}$, we have a canonical isometry
\[
\mathrm{Pic}(\bm{\mathcal{X}}_s)\supset \langle \bm{\xi}_s\rangle^{\perp}\xrightarrow{\simeq}L(\iota^{\mathrm{KS}}(s))
\]
compatible with $\ell$-adic realizations on both sides via the isometry $\alpha_{\ell}$, for any prime $\ell$ prime to the characteristic of $k(s)$.
\end{proposition}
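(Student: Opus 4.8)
The plan is to build the isometry out of the cohomological realizations, transporting back and forth between the primitive cohomology $\bm{P}^2_?$ and $\bm{L}_?(-1)$ via the isometries $\alpha_?$, and then to match, inside a common realization, the classes of line bundles on the K3 fibre with the special endomorphisms of the Kuga--Satake abelian scheme. Injectivity is cheap: a line bundle in $\langle\bm{\xi}_s\rangle^\perp\subset\mathrm{Pic}(\bm{\mathcal{X}}_s)$ with trivial image has trivial $\ell$-adic cycle class, hence is itself trivial, since the $\ell$-adic cycle class map on the Picard group of a K3 surface is injective; and compatibility with the quadratic forms is automatic once the map is defined, because $\alpha_\ell$ is an isometry and the intersection form, the composition form on $L(\iota^{\mathrm{KS}}(s))$ coming from $\underline{\End}(\bm{H})$, and the pairing on $\bm{P}^2_\ell$ all correspond under realization. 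So the real content is \emph{(i)} to define the map, and \emph{(ii)} surjectivity.

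For \emph{(i)}: given $\mathcal{L}\in\langle\bm{\xi}_s\rangle^\perp$, transport its realizations through $\alpha_?$ to a compatible family of classes in $\bm{L}_?(-1)$, and hence, via $\bm{L}\hookrightarrow\underline{\End}(\bm{H})$, a compatible family of cohomological endomorphisms of $A^{\mathrm{KS}}$ at $\iota^{\mathrm{KS}}(s)$; the $\ell$-adic member is $\Gamma_s$-invariant because $\mathcal{L}$ and $\alpha_\ell$ are defined over $k(s)$. By the Tate conjecture for homomorphisms of abelian varieties over finitely generated fields (Tate over finite fields and its extensions to the finitely generated case), this $\Gamma_s$-invariant class in $\End(\bm{H}_{\ell})$ is the realization of an element of $\End(A^{\mathrm{KS}}_{\iota^{\mathrm{KS}}(s)})\otimes\Q$; comparing integral structures --- $c_1(\mathcal{L})$ is integral and, after replacing $\mathcal{L}$ by a generator of the line it spans, primitive --- this element is in fact integral and, being special by construction of its realizations, lands in $\bm{E}(\iota^{\mathrm{KS}}(s))$, hence in $L(\iota^{\mathrm{KS}}(s))$. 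This defines the map, which is injective by the previous paragraph.

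For \emph{(ii)}, which is the crux: one cannot here invoke the Tate conjecture for divisors on K3 surfaces, so the argument produces line bundles by lifting to characteristic zero. By a spreading-out and specialization argument, as in the reduction carried out in the proof of Corollary~6.11 of~\cite{mp:tatek3}, one reduces to the case $k(s)$ finite, with $W=W(k(s))$. Let $f\in L(\iota^{\mathrm{KS}}(s))$; after replacing $f$ by a primitive vector its de Rham realization $\bm{f}_{\dR}$, viewed in $\bm{P}^2_{\dR,s}$ through $\alpha_{\dR}$, is anisotropic (since $f$ has nonzero self-intersection) and spans a local direct summand. By Proposition~\ref{prop:def theory special end}, the fact that $\iota^{\mathrm{KS}}$ is \'etale (Proposition~\ref{prop:integral kuga satake}), and the deformation theory of $\mathsf{M}_{2d}$ recalled in~\S\ref{subsec:k3 deform}, the locus in the $W$-deformation space of $(\bm{\mathcal{X}}_s,\bm{\xi}_s)$ along which $f$ deforms is cut out by a single equation whose linear part is the nondegenerate pairing against $\bm{f}_{\dR}$; this forces one of the linear coefficients to be a unit, so the locus is formally smooth over $W$, hence $W$-flat, and $(s,f)$ lifts to a point $\tilde{s}$ valued in the ring of integers $\co_{L}$ of a finite extension $L$ of $W[p^{-1}]$. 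On the generic fibre $\bm{f}_{\dR}$ stays orthogonal to the Hodge line $\Fil^2\bm{H}^2_{\dR}$, so $\alpha$ carries the (integral) Betti realization of $f$ to an integral Hodge class of type $(1,1)$ on $\bm{\mathcal{X}}_{\tilde{s}}$ over $\C$; since $H^1(\bm{\mathcal{X}}_{\tilde{s}},\co)=0$, this is the first Chern class of a line bundle $\mathcal{N}$ on $\bm{\mathcal{X}}_{\tilde{s}}$ over a finite extension of $L$. As $\bm{\mathcal{X}}_{\tilde{s}}$ is regular with special fibre the principal divisor $\{p=0\}$, the restriction $\mathrm{Pic}(\bm{\mathcal{X}}_{\tilde{s}})\to\mathrm{Pic}(\bm{\mathcal{X}}_{\tilde{s},L})$ is bijective, so $\mathcal{N}$ extends over $\co_{L}$ and restricts to a line bundle on $\bm{\mathcal{X}}_s$ --- a priori after a finite extension of $k(s)$, but its N\'eron--Severi class is Galois-invariant since $f$ is defined over $k(s)$, and $\mathrm{Br}$ of a finite field vanishes, so it descends to $\bm{\mathcal{X}}_s$ itself, say to $\mathcal{M}$. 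Comparing $\ell$-adic realizations across the smooth proper base $\Spec\co_{L}$ --- the special endomorphism $f$ exists there by construction, so its realization specializes compatibly with $c_1(\mathcal{N})$ --- shows that the image of $\mathcal{M}$ (projected to $\langle\bm{\xi}_s\rangle^\perp$) under the map of \emph{(i)} is exactly $f$, and that the resulting bijection is compatible with $\alpha_\ell$ for every $\ell$ prime to the characteristic.

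The main obstacle is \emph{(ii)}: proving that the deformation space of the pair $(s,f)$ is flat over $W$ --- equivalently, that a special endomorphism always lifts, in a one-parameter family, to characteristic $0$ --- and then tracking the specialization of the characteristic-zero line bundle and all of its realizations. I want to emphasize that, since the inputs invoked here --- the integral canonical model of Theorem~\ref{theorem:canonical models}, the integral theory of special endomorphisms from~\cite{mp:reg} and~\cite{aghmp:colmez}, and the deformation-theoretic Propositions~\ref{prop:def theory special end} and~\ref{prop:Z Lambda} --- have now been established for all $p$, this argument goes through unchanged when the residue characteristic is $2$; that, rather than any new geometric idea, is what is new relative to~\cite{mp:tatek3}.
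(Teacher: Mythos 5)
Your overall skeleton (lift in both directions, compare in characteristic zero, specialize) is the template of \cite[Theorem 4.17]{mp:tatek3} that the paper invokes, but both of your key steps have genuine gaps. For the direction $\mathrm{Pic}(\bm{\mathcal{X}}_s)\supset\langle\bm{\xi}_s\rangle^\perp\to L(\iota^{\mathrm{KS}}(s))$, your appeal to Tate's theorem for endomorphisms of abelian varieties does not do the job: the proposition is asserted for \emph{every} point $s\to\tilde{\mathsf{M}}^{\mathrm{sm}}_{2d}$, not only points over finitely generated fields, so the theorem is not even available in general; even over a finite field it only places the transported class in $\End(A^{\mathrm{KS}}_{\iota^{\mathrm{KS}}(s)})\otimes\Z_\ell$, not in the image of a single integral (or $\Q$-rational) endomorphism with the prescribed realization; and, most seriously, being a \emph{special} endomorphism in characteristic $p$ is a condition on the crystalline realization (it must land in $\bm{L}_{\cris,s}\subset\underline{\End}(\bm{H}_{\cris,s})$, cf.~\eqref{subsec:self-dual}), which a purely $\ell$-adic construction with $\ell\neq p$ cannot see -- saying the element is special ``by construction of its realizations'' begs exactly this question at $p$. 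The paper's route avoids all of this: by Lieblich--Olsson \cite[A.1]{Lieblich2011-ww} the triple $(\bm{\mathcal{X}}_s,\bm{\xi}_s,\eta)$ deforms in a family flat over $\Z_p$, the identification of line bundle classes with special endomorphisms is then the characteristic-zero (absolute Hodge) statement, and one specializes the resulting special endomorphism, which carries its crystalline realization along.

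For the direction $f\mapsto$ line bundle, the crux is flatness over $W$ of the deformation space of $(\bm{\mathcal{X}}_s,\bm{\xi}_s,f)$, and your local argument does not establish it. Proposition~\ref{prop:def theory special end} gives the tangent space as the orthogonal hyperplane to $\bm{f}_{\dR}$, but this linear condition is nontrivial only when the relevant pairing against $\bm{f}_{\dR}$ is nonzero modulo $p$; for a primitive $f$ with $p\mid f\circ f$ the mod-$p$ de Rham realization can be isotropic, or fail to span a direct summand (this is exactly why $\mathcal{Z}^{\mathrm{pr}}(m)_{(p)}$ is a proper open substack of $\mathcal{Z}(m)_{(p)}$), and then the ``one equation with a unit linear coefficient'' argument collapses -- so your claimed anisotropy ``since $f$ has nonzero self-intersection'' is false mod $p$ in general. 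The paper gets the needed flatness by reducing, as in Proposition~\ref{prop:integral kuga satake}, to a smooth point of $\mathcal{Z}^{\mathrm{pr}}(m)_{(p)}$ and running the argument of Proposition 7.18 of \cite{mp:reg} with the generic triviality statement~\ref{zlambda:generic} of Proposition~\ref{prop:Z Lambda} as input; that input is also what guarantees the characteristic-zero lift lies on $\mathcal{S}(L_d)$ itself (i.e.\ the copy of $\Lambda$ persists) and not merely on $\mathcal{Z}(m)_{(p)}$, a point your lifting argument never addresses. Finally, your preliminary reduction to finite $k(s)$ is neither needed nor justified here: it is Theorem~\ref{thm:tate special}, not this proposition, that reduces to finite fields, and the present statement must hold at arbitrary points since both sides of the isometry are not invariant under such specializations in any obvious way.
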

\begin{proof}
As in the proof of~\cite[Theorem 4.17]{mp:tatek3}, it is enough to show two assertions:
\begin{itemize}
	\item For every $f\in L(\iota^{\mathrm{KS}}(s))$, the deformation space of the triple $(\bm{\mathcal{X}}_s,\bm{\xi}_s,f)$ admits a component that is flat over $\Z_p$.
	\item For every $\eta\in \langle \bm{\chi}_s\rangle^{\perp}\subset \mathrm{Pic}(\bm{\mathcal{X}}_s)$, the deformation space of the triple $(\bm{\mathcal{X}}_s,\bm{\xi}_s,\eta)$ admits a flat component.
\end{itemize}

The second assertion follows from a result of Lieblich-Olsson~\cite[A.1]{Lieblich2011-ww}. As for the first, since $\iota^{\mathrm{KS}}$ is \'etale, it is enough to show that the deformation space of $f$ over $\mathcal{S}(L_d)$ is flat. In fact, as in the proof of Proposition~\ref{prop:integral kuga satake}, we can assume that we are working at a smooth point of the stack $\mathcal{Z}^{\mathrm{pr}}(m)_{(p)}$. In this case, the assertion follows as in Proposition 7.18 of~\cite{mp:reg}, using~\ref{zlambda:generic} as input.
\end{proof}

\begin{proof}[Proof of Theorem~\ref{thm:k3}]
Suppose that $X$ is a K3 surface over a finitely generated field $k$. Fix a separable closure $k^{\mathrm{sep}}$ of $k$, and let $\Gamma_k = \Gal(k^{\mathrm{sep}}/k)$ be the associated absolute Galois group. We have to show that, for all $\ell$ prime to the characteristic of $k$, the realization map
\[
\mathrm{Pic}(X) \to H^2_{\et}(X_{k^{\mathrm{sep}}},\Q_\ell)(1)^{\Gamma_s}
\]
is an isomorphism.

After replacing $k$ with a finite, separable extension, we can assume that $X$ admits a quasi-polarization $\chi$ of degree $2d$ for some integer $d$, and thus is associated with a $k$-valued point $s\to \mathsf{M}_{2d}$. We can assume that $s$ admits a lift to $\tilde{\mathsf{M}}_{2d}$, which we will once again denote by $s$.

If $s$ factors through $\tilde{\mathsf{M}}^{\mathrm{sm}}_{2d}$, then we are done by Theorem~\ref{thm:tate special} and Proposition~\ref{prop:special end to picard}. 

Otherwise, $X$ is a superspecial K3 surface: That is, the de Rham Chern class $\mathrm{ch}_{\dR}(\xi)$ lies in $\Fil^2H^2_{\dR}(X/k)$; see~\cite[2.2]{Ogus1979-cv}. Now, since there are no non-constant families of quasi-polarized superspecial K3 surfaces (see~\cite[Remark 2.7]{Ogus1979-cv}), and since every irreducible locus of the supersingular locus of $\mathsf{M}_{2d,\F_p}$ has dimension $9$ (see~\cite[Theorem 15]{Ogus2001-wy}), the theorem in general follows from the validity of the Tate conjecture for points in $\tilde{\mathsf{M}}^{\mathrm{sm}}_{2d,\F_p}$, and Artin's result on the constancy of Picard rank in families of supersingular K3 surfaces~\cite[Corollary (1.3)]{Artin1974-uh}.
\end{proof}

\begin{remark}
Given the Tate conjecture, one should be able to show that every superspecial K3 surface is actually the Kummer surface associated with a superspecial abelian surface, using which one should be able to extend the period map $\iota^{\mathrm{KS}}$ over all of $\tilde{\mathsf{M}}_{2d}$.
\end{remark}

\printbibliography

\end{document}